\documentclass[preprint,12pt]{elsarticle}

\usepackage{fullpage}

%% Use the option review to obtain double line spacing
%% \documentclass[authoryear,preprint,review,12pt]{elsarticle}

%% Use the options 1p,twocolumn; 3p; 3p,twocolumn; 5p; or 5p,twocolumn
%% for a journal layout:
%% \documentclass[final,1p,times]{elsarticle}
%% \documentclass[final,1p,times,twocolumn]{elsarticle}
%% \documentclass[final,3p,times]{elsarticle}
%% \documentclass[final,3p,times,twocolumn]{elsarticle}
%% \documentclass[final,5p,times]{elsarticle}
%% \documentclass[final,5p,times,twocolumn]{elsarticle}

%% For including figures, graphicx.sty has been loaded in
%% elsarticle.cls. If you prefer to use the old commands
%% please give \usepackage{epsfig}

%% The amssymb package provides various useful mathematical symbols
%%\usepackage{amssymb}
%% The amsthm package provides extended theorem environments
%% \usepackage{amsthm}

%% The lineno packages adds line numbers. Start line numbering with
%% \begin{linenumbers}, end it with \end{linenumbers}. Or switch it on
%% for the whole article with \linenumbers.
%% \usepackage{lineno}

%-----------------------------------------------------------------------------
% Additional packages
%-----------------------------------------------------------------------------
\usepackage{amssymb}
\usepackage{amsfonts}
\usepackage{amsmath}
\usepackage{amsthm}
\usepackage{bm}
\usepackage{color}      % colored text
\usepackage{enumerate}
\usepackage{graphicx}
\usepackage{subfig}
\usepackage[notref,notcite]{showkeys}
\usepackage{url}
\usepackage{cleveref}
%\usepackage[colorlinks=false]{hyperref}

%%%%%%%%%%%%%%%%%%%%%%%%%%%%%%%%%%%%%%%%%%%%%%%%%%%%%%%%%%%%%%%%%%%%%%%%%%
% MATH SYMBOLS
%%%%%%%%%%%%%%%%%%%%%%%%%%%%%%%%%%%%%%%%%%%%%%%%%%%%%%%%%%%%%%%%%%%%%%%%%%

%% theorem environments, etc...
%\newtheorem{varform}{Variational Formulation}
\newtheorem{thm}{Theorem}
\newtheorem{lem}[thm]{Lemma}

\newtheorem{cor}[thm]{Corollary}
\newtheorem{remark}[thm]{Remark}

% define generic math functions
\newcommand{\tr}{\mathrm{tr}}        %trace
       %curl
         %divergence
\newcommand{\dist}{\mathrm{dist}}       %distance
       %set diameter
        %signum
 % matrix dot product
\newcommand{\tp}{^{T}} % \dag % matrix or vector transpose
 % linear space "span"
\newcommand{\dij}{\delta_{ij}}

% define vector symbols
\newcommand{\va}{\mathbf{a}}
\newcommand{\vb}{\mathbf{b}}

\newcommand{\vu}{\mathbf{u}}
\newcommand{\vv}{\mathbf{v}}
\newcommand{\vw}{\mathbf{w}}

\newcommand{\vq}{\mathbf{q}}
\newcommand{\ve}{\mathbf{e}}
\newcommand{\vQ}{\mathbf{Q}}
\newcommand{\vr}{\mathbf{r}}

\newcommand{\vm}{\mathbf{m}}
\newcommand{\vn}{\mathbf{n}}
\newcommand{\vt}{\mathbf{t}}

\newcommand{\vx}{\mathbf{x}}

\newcommand{\vnu}{\bm{\nu}}

\newcommand{\vA}{\mathbf{A}}

\newcommand{\vD}{\mathbf{D}}
\newcommand{\vE}{\mathbf{E}}

\newcommand{\vI}{\mathbf{I}}
\newcommand{\vR}{\mathbf{R}}

\newcommand{\vF}{\mathbf{F}}

\newcommand{\vzero}{\mathbf{0}}

% define domain set notation
\newcommand{\Om}{\Omega}
\newcommand{\dOm}{\partial \Omega}
\newcommand{\Gm}{\Gamma}
\newcommand{\Oc}{\Om_{\mathrm{c}}} % colloid domain
\newcommand{\Ochat}{\hat{\Om}_{\mathrm{c}}} % reference colloid domain
\newcommand{\bdys}{\Gamma_{s}}
\newcommand{\bdyvn}{\Gamma_{\vn}}
\newcommand{\bdyvu}{\Gamma_{\vu}}

% define domain and boundary integrals
\newcommand{\iO}{\int_{\Omega}}

 % with time index
 % with pseudo-time index

 % with time index
 % with pseudo-time index

        %inner product
                 %norm
% define surface Laplacian and gradient

 % with time index

 % with time index

% define function space symbols
   % vector space
\newcommand{\X}{\mathbb{X}}
   % scalar space
   % pressure space
\newcommand{\Hbdy}[1]{H^1_{#1}(\Omega)}   % H^1 space with boundary condition
   % another space
   % another space
\newcommand{\Sh}{\mathbb{S}_h}   % another space
\newcommand{\Nh}{\mathbb{N}_h}   % another space
\newcommand{\Uh}{\mathbb{U}_h}   % another space
  % unit sphere space
\newcommand{\Y}{\mathbb{U}^{\perp}}   % space
\newcommand{\Yh}{\Y_h}   % space
  % 2nd order tensor space
  % space of natural states
\newcommand{\R}{\mathbb{R}}   % real numbers
   % real numbers
   % non-homogeneous H(div) space

% admissible class, singular set, etc.
\newcommand{\Admis}{\mathbb{A}}   % the admissible class
\newcommand{\Sing}{\mathbb{S}}   % the admissible class

% define duality pairing

%\newcommand{\rd}{\rangle_{\M}}
%\newcommand{\rdi}{\rangle_{\M^{i}}}

% define generic symbols, coefficients, physical constants, variable names etc...
\newcommand{\dt}{\delta t}

% define colloid stuff
\newcommand{\cdist}{d}
\newcommand{\cdisthat}{\hat{\cdist}}
\newcommand{\collmap}{\vF}
\newcommand{\collrot}{\vR}

\newcommand{\colltrans}{\vb}
\newcommand{\phase}{\phi}
\newcommand{\phaseref}{\phase_{\mathrm{ref}}}
 % transistion thickness
\newcommand{\anchorcoef}{K_a}
\newcommand{\Ea}{E_{\mathrm{a}}}

\newcommand{\mmcoef}{\rho}

% discrete forms (lumped mass matrix stuff)
\newcommand{\ipanchor}{a_h}
\newcommand{\linanchor}{\ell_h}
\newcommand{\ipelec}{e_h}

% external electric field constants
\newcommand{\Eext}{E_{\mathrm{ext}}}
\newcommand{\Ecoef}{K_{\mathrm{ext}}}
\newcommand{\ebar}{\bar{\varepsilon}}
\newcommand{\ea}{\varepsilon_{\mathrm{a}}}
\newcommand{\ga}{\gamma_{\mathrm{a}}}

% define triangulation and polynomial space symbols
\newcommand{\Tk}{\mathcal{T}}

\newcommand{\Nk}{\mathcal{N}}

% finite element coefficient vector symbols
\newcommand{\coefB}{\mathrm{B}}
\newcommand{\coefC}{\mathrm{C}}
\newcommand{\coefF}{\mathrm{F}}
\newcommand{\coefG}{\mathrm{G}}
\newcommand{\coefN}{\mathrm{N}}
\newcommand{\coefQ}{\mathrm{Q}}
\newcommand{\coefS}{\mathrm{S}}
\newcommand{\coefT}{\mathrm{T}}
\newcommand{\coefV}{\mathrm{V}}
\newcommand{\coefW}{\mathrm{W}}
\newcommand{\coefY}{\mathrm{Y}}
\newcommand{\coefZ}{\mathrm{Z}}

%%%%%%%%%%%%%%%%%%%%%%%%%%%%%%%%%%%%%%%%%%%%%%%%%%%%%%%%%%%%%%%%%%%%%%%%%%

\journal{Journal of Computational Physics}

\begin{document}

\begin{frontmatter}

%% Title, authors and addresses

%% use the tnoteref command within \title for footnotes;
%% use the tnotetext command for theassociated footnote;
%% use the fnref command within \author or \address for footnotes;
%% use the fntext command for theassociated footnote;
%% use the corref command within \author for corresponding author footnotes;
%% use the cortext command for theassociated footnote;
%% use the ead command for the email address,
%% and the form \ead[url] for the home page:
%% \title{Title\tnoteref{label1}}
%% \tnotetext[label1]{}
%% \author{Name\corref{cor1}\fnref{label2}}
%% \ead{email address}
%% \ead[url]{home page}
%% \fntext[label2]{}
%% \cortext[cor1]{}
%% \address{Address\fnref{label3}}
%% \fntext[label3]{}

\title{The Ericksen Model of Liquid Crystals \\ with Colloidal and Electric Effects}

%\author{Ricardo H. Nochetto$^1$, ~ Shawn W. Walker$^2$, ~ Wujun Zhang$^3$}

%% use optional labels to link authors explicitly to addresses:
\author[addRHN]{Ricardo H. Nochetto}
\ead{rhn@math.umd.edu}
\author[addSWW]{Shawn W. Walker}
\ead{walker@math.lsu.edu}
\author[addWZ]{Wujun Zhang}
\ead{wujun@math.rutgers.edu}

\address[addRHN]{Department of Mathematics and Institute for Physical Science and Technology, University of Maryland, College Park, MD 20742}
\address[addSWW]{Department of Mathematics and Center for Computation and Technology (CCT) Louisiana State University, Baton Rouge, LA 70803}
\address[addWZ]{Department of Mathematics, Rutgers University, Piscataway, NJ 08854}

%\address{}

\begin{abstract}
%% Text of abstract
We present a robust discretization of the Ericksen model of liquid crystals with variable degree of orientation coupled with colloidal effects and electric fields.  The total energy consists of the Ericksen energy, a weak anchoring (or penalized Dirichlet) energy to model colloids, and an electrical energy for a given electric field.  We describe our special discretization of the total energy along with a method to compute minimizers via a discrete quasi-gradient flow algorithm which has a strictly monotone energy decreasing property.  Numerical experiments are given in two and three dimensions to illustrate that the method is able to capture non-trivial defect patterns, such as the Saturn ring defect.  We conclude with a rigorous proof of the $\Gamma$-convergence of our discrete energy to the continuous energy.

\end{abstract}

\begin{keyword}
%% keywords here, in the form: keyword \sep keyword
liquid crystals \sep finite element method \sep gamma-convergence \sep gradient flow \sep line defect \sep plane defect \sep Saturn ring defect

%% PACS codes here, in the form: \PACS code \sep code

%% MSC codes here, in the form: \MSC code \sep code
%% or \MSC[2008] code \sep code (2000 is the default)

\MSC 65N30 \sep 49M25 \sep 35J70

\end{keyword}

\end{frontmatter}

%% \linenumbers

%---------------------------------------------------------------------------
\section{Introduction}\label{sec:intro}
%---------------------------------------------------------------------------

This paper presents a method for solving the Ericksen model of liquid crystals \cite{Ericksen_ARMA1991, deGennes_book1995}, with additional effects due to colloidal domains and electric fields.  Liquid crystals are a work-horse technology enabling electronic displays \cite{Goodby_inbook2012, Perkins_website2009, Senyuk_website2010}, for instance.  Moreover, they have a host of potential applications in material science \cite{Ackerman_NC2015, Araki_PRL2006, Bisoyi_CSR2011, Blanc_Sci2016, Blinov_book1983, Coles_NP2010, Conradi_SM2009, Hain_OC2001, Humar_OE2010, Moreno-Razo_Nat2012, Musevic_Sci2006, Musevic2011, Rahimi_PNAS2015, Shah_Small2012, Sun_SMS2014, Wang_NL2014}.  One avenue is to use external fields (e.g. electric fields) and colloidal dispersions to build new materials through directed self-assembly \cite{Copar_Mat2014, Araki_PRL2006, Conradi_SM2009, Eskandari_Lang2013, Furst_PNAS2011, Hamley_ACIE2003, Hiemenz_Book1997, Jeong_SM2015, Kuksenok_PRE1996, Liu_PNAS2013, Musevic_Sci2006, Shah_Small2012, Tasinkevych_NJP2012, Wang_Nat2012}.

A significant amount of mathematical analysis has been done on liquid
crystals \cite{Virga_book1994, Calderer_SJMA2002, Ambrosio_MM1990b,
  Ambrosio_MM1990a, Bauman_ARMA2002, Ball_ARMA2011, Lin_CPAM1989,
  Lin_CPAM1991, Ball_PAMM2007, Golovaty_JMAA2001, Hardt_ANL1988,
  Hardt_CVPDE1988, LinLiu_JPDE2001}.  Moreover, a host of numerical
methods have been developed for statics and dynamics
\cite{Badia_ACME2011, Barrett_M2AN2006, Cruz_JCP2013, Ramage_SJSC2013,
  Adler_SJSC2015, Adler_SJNA2015, Adler_SJSC2016}.  In particular, the
methods in \cite{Bartels_MC2010b,  Hardt_CVPDE1988, Cohen_CPC1989,
  Lin_SJNA1989, Alouges_SJNA1997} are for harmonic mappings and liquid
crystals with fixed degree of orientation, i.e. a unit vector field
$\vn$ (called the director field) represents the orientation of liquid
crystal molecules.  See \cite{Guillen-Gonzalez_M2AN2013, Liu_SJNA2000,
  Walkington_M2AN2011, Yang_JNNFM2011, Yang_JCP2013} for methods that
couple liquid crystals to Stokes flow.  We also refer to the survey paper \cite{Badia_ACME2011} for more numerical methods.

The method we present \cite{Nochetto_SJNA2017,
  NochettoWalker_MRSproc2015} is for the one-constant model of liquid
crystals with variable degree of orientation \cite{Ericksen_ARMA1991,
  deGennes_book1995, Virga_book1994} (Ericksen's model). The state of
the liquid crystal is described by a director field $\vn$ and a
scalar function $s$, the so-called degree-of-orientation, which
minimize the energy
\begin{equation}\label{energy0}
E[ s ,\vn]  := \iO \Big(\kappa | \nabla s |^2 + s^2 | \nabla  \vn |^2\Big) dx +
\iO \psi (s) dx.
\end{equation}
Hereafter, $\kappa > 0$ is a material constant, $\Om$ is a
bounded Lipschitz domain in $\R^d$ with $d\ge2$, and $\psi$ is a
double well potential (defined below).

Minimizers of the Ericksen model may exhibit non-trivial defects (depending on boundary conditions) \cite{Bethuel_book1994, Blinov_book1983, Brezis_CMP1986, Lin_CPAM1991, Lin_CPAM1989, Schoen_JDG1982}.  The presence of $s$ in \eqref{energy0} leads to an Euler-Lagrange equation for $\vn$ that is \emph{degenerate}.  This allows for line and plane defects (singularities of $\vn$) in dimension $d=3$ when $s$ vanishes; these types of defects are important for applications, especially defects that lie on three dimensional space curves \cite{Araki_PRL2006, Tojo_EPJE2009}.  Regularity properties of minimizers, and the size of defects, were studied in \cite{Lin_CPAM1991}.  This leads to the study of dynamics \cite{Calderer_SJMA2002} and corresponding numerics \cite{Barrett_M2AN2006}, which are relevant to our paper.  But in both cases they regularize the model to avoid the degeneracy associated with the order parameter $s$ vanishing.

Our finite element method (FEM) does \emph{not} require any regularization.  We discussed the mathematical foundation of our method in \cite{Nochetto_SJNA2017, NochettoWalker_MRSproc2015}: we proved stability and convergence properties via $\Gamma$-convergence \cite{Braides_book2002} (as the mesh size $h$ goes to zero) and developed a quasi-gradient flow method to solve the discrete problem.  Our discretization of the energy \eqref{energy0}, defined in \eqref{discrete_energy}, requires that the mesh $\Tk_h$ be \emph{weakly acute} (or the stronger condition of having \emph{non-obtuse} angles).  This discretization preserves the underlying structure and robustly handles the unit length constraint on $\vn$ and the degeneracy present when $s$ vanishes.  Our previous paper \cite{Nochetto_SJNA2017} showed a variety of simulations of minimizers with interesting defect structures.

The present paper demonstrates the ability of the Ericksen model, and of our method, to capture defect structures induced by colloidal inclusions (i.e. holes in the domain) and effects due to electric fields.  We are able to recreate the famous Saturn ring defect \cite{Gu_PRL2000}, which occurs around colloidal particles in different situations, by using both a conforming (non-obtuse mesh) and a non-conforming cube mesh with an immersed boundary approach to model the colloid.  In addition, we include electric field effects by incorporating an electric energy term into the total energy \eqref{energy0}, and demonstrate the classic \emph{Freedericksz transition} \cite{Virga_book1994, deGennes_book1995, Golovaty_JMAA2001, Biscari_CMT2007, Hoogboom_RSA2007}.  We also investigate the coupling of colloidal and electric effects.

The paper is organized as follows.  In Section \ref{sec:model}, we
recall the Ericksen model for liquid crystals with variable degree of
orientation, and describe our discretization of the continuous energy.
Section \ref{sec:method} recalls properties of the discretization, and
our initial minimization scheme.  Section \ref{sec:implement}
describes the details for properly implementing our method.  Section
\ref{sec:conforming_mesh} illustrates our method in the presence of a
colloidal inclusion with a \emph{conforming non-obtuse} mesh.  Section
\ref{sec:immersed_boundary} shows an alternative way to model colloids
by an immersed boundary approach (along with supporting
simulations). In Section \ref{sec:elec_field}, we show how to include
electric field effects in the model and describe a modified
minimization procedure to compute minimizers.
Section \ref{sec:general_scheme} presents the monotone energy decreasing
property of the quasi-gradient flow algorithm to compute discrete minimizers.
Section \ref{sec:Gamma_conv_results} provides a summary of the $\Gamma$-convergence for our discrete energy.  We close in Section \ref{sec:conclusion} with some discussion.

%---------------------------------------------------------------------------
\section{Ericksen's model}\label{sec:model}
%---------------------------------------------------------------------------

Let the \emph{director} field $\vn : \Om \subset \mathbb{R}^d \rightarrow \mathbb{S}^{d-1}$ be a vector-valued function with unit length (see Figure \ref{fig:director} for a description of the meaning of $\vn$).
The \emph{degree-of-orientation} $s: \Om \subset \mathbb{R}^d \rightarrow ( - \frac{1}{2} , 1) $ is a real valued function (see Figure \ref{fig:degree_of_orientation} for a description of the meaning of $s$).  The variable $\vn$, by itself, cannot properly describe a ``loss of order'' in the liquid crystal material because it has unit length.  The $s$ variable provides a way to characterize the local order (see Figure \ref{fig:degree_of_orientation}).

\begin{figure}[h!]
\begin{center}

\includegraphics[width=1.5in]{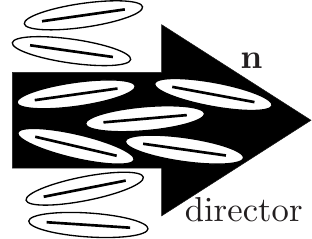}
\caption{Macroscopic order parameter: the director variable $\vn$.  Nematic liquid crystal molecules have an elongated rod-like shape (see elongated ellipsoids), which gives the material its \emph{anisotropic} nature.  The value of $\vn(x)$ (a unit vector), at the point $x$, represents a probabilistic average over a local ensemble of liquid crystal molecules ``near'' $x$ \cite{Virga_book1994}.}
\label{fig:director}
\end{center}
\end{figure}

\begin{figure}%[h!]
\begin{center}

\includegraphics[width=5.0in]{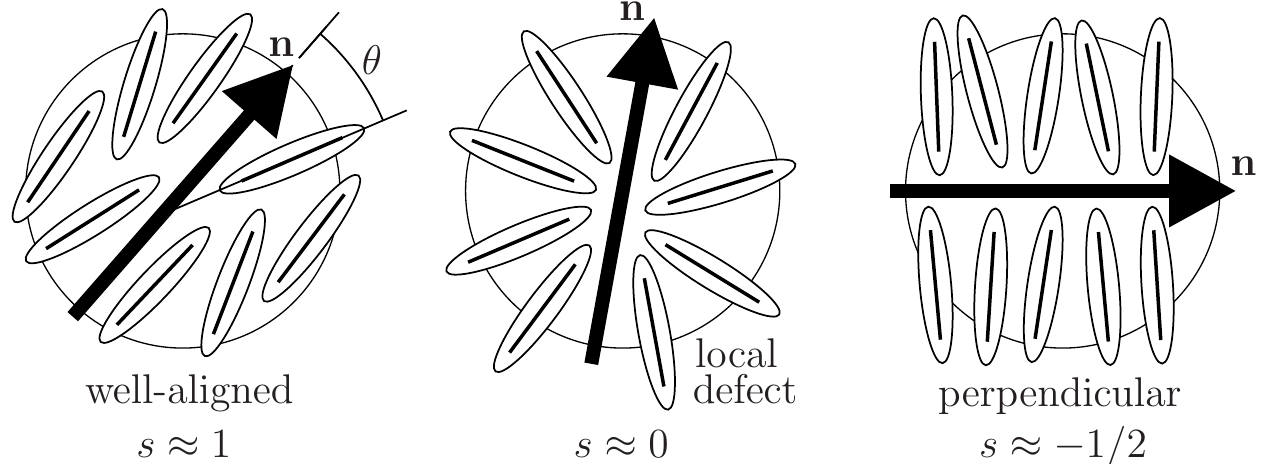}
\caption{Macroscopic order parameter: the degree-of-orientation variable $s$.  It is a probabilistic average over the angle $\theta$ between $\vn(x)$ and an individual liquid crystal molecule; the average is taken over a local ensemble \cite{Virga_book1994}.   The case $s = 1$ represents the state of perfect alignment in which all molecules (in the ensemble) are parallel to $\vn$. Likewise, $s = -1/2$ represents the state of microscopic order in which all molecules (in the ensemble) are perpendicular to $\vn$.  When $s = 0$, the molecules do not lie along any preferred direction which represents the state of an isotropic, uniformly random, distribution of molecules (in the ensemble).  The state $s = 0$ is called a \emph{defect} in the liquid crystal material.}
\label{fig:degree_of_orientation}
\end{center}
\end{figure}

%---------------------------------------------------------------------------
\subsection{Ericksen's one constant energy}\label{sec:energy}
%---------------------------------------------------------------------------

The equilibrium state of the liquid crystal material is described by
the pair $(s, \vn)$ minimizing a bulk-energy functional
\eqref{energy0} which we split as
\begin{equation}\label{energy}
  E_1[ s ,\vn]  := \iO \Big(\kappa | \nabla s |^2 + s^2 | \nabla  \vn |^2\Big) dx,
  \qquad
  E_2[s] := \iO \psi (s) dx ,
\end{equation}
where $\kappa > 0$.  The double well potential $\psi$ is a $C^2$ function defined on $-1/2 < s < 1$ that satisfies \cite{Ericksen_ARMA1991, Ambrosio_MM1990a, Lin_CPAM1991}
\begin{enumerate}
  \item $\lim_{s \rightarrow 1} \psi (s) = \lim_{s \rightarrow -1/2} \psi (s) = \infty$,
  \item $\psi(0) > \psi(s^*) = \min_{s \in [-1/2, 1]} \psi(s) = 0$ for some $s^* \in (0,1)$,
    \item $\psi'(0) =0 $.
\end{enumerate}

It was shown in \cite{Ambrosio_MM1990a, Lin_CPAM1991} that introducing
an auxiliary variable $\vu = s \vn$ allows the energy $E_1[s,\vn]$
to be rewritten as
\begin{equation}\label{auxiliary_energy_identity}
  E_1[ s , \vn] = \widetilde{E}_1[ s , \vu] := \iO
  \Big((\kappa - 1) | \nabla s |^2 + | \nabla  \vu |^2 \Big) dx,
\end{equation}
which follows from the orthogonal decomposition $\nabla \vu = \vn \otimes \nabla s + s \nabla \vn$ (and is due to the constraint $| \vn | = 1$). Hence, \cite{Ambrosio_MM1990a, Lin_CPAM1991} define the admissible class of solutions (minimizers) as
\begin{equation}\label{admissibleclass}
\begin{split}
    \Admis :=& \{ (s, \vu) : \Omega \rightarrow (-1/2 , 1) \times\mathbb{R}^d:
    ~ (s,\vu) \in [H^1(\Omega)]^{d+1},\; \vu = s \vn, \vn\in \mathbb{S}^{d-1} \}.
\end{split}
\end{equation}
We may also enforce boundary conditions on $(s,\vu)$,
possibly on different parts of the boundary.  Let $(\bdys,\bdyvu)$
be open subsets of $\dOm$ where we set Dirichlet boundary
conditions for $(s,\vu)$.
Then we have the following restricted admissible class
\begin{align}\label{admissibleclass_BC}
  \Admis(g,\vr) := \left\{ (s, \vu) \in \Admis :
  ~ s|_{\bdys} = g, \quad \vu|_{\bdyvu} = \vr \right\},
\end{align}
for some given functions $(g,\vr)\in [W^1_\infty(\R^d)]^{d+1}$ that
satisfy the following in a neighborhood of $\dOm$:
$-1/2 < g < 1$ and $\vr = g \vq$, for some $\vq \in \mathbb{S}^{d-1}$.
Note that if we further assume
\begin{equation}\label{gne0}
g \ge \delta_0 \quad\text{ on } \dOm, ~ \text{ for some } \delta_0 > 0,
\end{equation}
then the function $\vn$ is $H^1$ in a neighborhood of $\dOm$ and satisfies
$\vn=g^{-1}\vr = \vq \in \mathbb{S}^{d-1}$ on $\dOm$.

When the degree of orientation $s$ is a non-zero constant, the energy
$E_1[s,\vn]$ in \eqref{energy} effectively reduces to the Oseen-Frank energy $\int_{\Om} |\nabla \vn|^2$.
The purpose of the degree of orientation is to relax the energy of defects.
In fact, discontinuities in $\vn$ (i.e. defects) may still occur in the singular set
\begin{align}\label{singularset}
\Sing := \{ x \in \Omega :\; s(x) = 0 \},
\end{align}
with finite energy: $E[ s ,\vn] < \infty$.  The existence of
minimizers in the admissible class, subject to Dirichlet boundary
conditions, was shown in \cite{Ambrosio_MM1990a, Lin_CPAM1991}.
Minimizers with defects are constructed explicitly in \cite{Virga_book1994}
or discovered numerically in \cite{Nochetto_SJNA2017}.

The parameter $\kappa$ in \eqref{energy} plays a major role in the occurrence of defects.  Assuming the boundary condition for $s$ is a positive constant well away from zero, if $\kappa$ is large, then $\iO \kappa |\nabla s|^2 dx$ dominates the energy and $s$ stays close to a positive constant within the domain $\Om$.  Thus, defects are less likely to occur. If $\kappa$ is small (say $\kappa < 1$), then $\iO s^2 |\nabla \vn|^2 dx$ dominates the energy, and $s$ may vanish in regions of $\Om$ and induce a defect.  This was confirmed by our numerical experiments in \cite{Nochetto_SJNA2017, NochettoWalker_MRSproc2015}.  The physically relevant case $0 < \kappa < 1$ is the more difficult case with regard to proving $\Gamma$-convergence (see \cite{Nochetto_SJNA2017}) because the energy is no longer convex.

%---------------------------------------------------------------------------
\subsection{Discretization of the energy}\label{sec:discretization}
%---------------------------------------------------------------------------

Let $\Tk_h =\{ T \}$ be a conforming simplicial triangulation of
$\Om$. The set of nodes (vertices) of $\Tk_h$ is denoted $\Nk_h$ and has
cardinality $n$. We demand that $\Tk_h$ be {\it weakly acute}, namely
\begin{equation}\label{weakly-acute}
  k_{ij} := -\int_{\Om} \nabla \phi_i \cdot \nabla \phi_j \, dx \geq 0 \quad\text{for all } i\ne j,
\end{equation}
where $\phi_i$ is the standard ``hat'' basis function associated with node $x_i \in \Nk_h$. We indicate with $\omega_i = \text{supp} \;\phi_i$ the patch of a node $x_i$ (i.e. the ``star'' of elements in $\Tk_h$ that contain the vertex $x_i$).  Of course, \eqref{weakly-acute} imposes a severe geometric restriction on $\Tk_h$ \cite{Ciarlet_CMAME1973, Strang_FEMbook2008} (especially in three dimensions).  We recall the
following characterization of \eqref{weakly-acute} for $d=2$.
\begin{lem}[weak acuteness in two dimensions]
\label{weak_acuteness_2D}
For any pair of triangles $T_1$, $T_2$ in $\Tk_h$ in two space dimensions that share a common
edge $e$, let $\alpha_i$ be the angle in $T_i$ opposite to $e$ (for $i=1,2$).
Then \eqref{weakly-acute} holds if and only if $\alpha_1 + \alpha_2 \leq 180^\circ$ for every edge $e$.
\end{lem}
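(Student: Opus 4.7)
The plan is to reduce \eqref{weakly-acute} to the classical cotangent formula for piecewise linear hat functions on a triangle, and then apply a short trigonometric identity. First I would observe that if nodes $x_i, x_j$ are not endpoints of a common edge of $\Tk_h$, the patches $\omega_i$ and $\omega_j$ overlap only on a lower-dimensional set, hence $k_{ij} = 0$ trivially. Thus the only nontrivial case is a pair of nodes $x_i, x_j$ sharing an edge $e$, and the integral in \eqref{weakly-acute} reduces to a sum over the (one or two) triangles containing $e$.

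Next I would establish the cotangent formula on a single triangle $T \in \Tk_h$ with vertices $x_i, x_j, x_k$, letting $\alpha_k$ be the angle at $x_k$ opposite the edge $e = x_i x_j$. Because $\phi_i|_T$ is affine, vanishes on the edge opposite $x_i$, and equals $1$ at $x_i$, the gradient $\nabla \phi_i|_T$ is perpendicular to the opposite edge $x_j x_k$, points into $T$, and has magnitude $1/h_i$, where $h_i$ is the height of $T$ measured from $x_i$; analogously for $\nabla \phi_j$. A direct computation shows the angle between these two inward gradients equals $\pi - \alpha_k$, so $\nabla \phi_i \cdot \nabla \phi_j = -\cos(\alpha_k)/(h_i h_j)$. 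Combining this with $|T| = \tfrac{1}{2} |x_i x_k|\, h_j$ and the right-triangle identity $h_i/|x_i x_k| = \sin \alpha_k$ yields
\begin{equation*}
\int_T \nabla \phi_i \cdot \nabla \phi_j \, dx = -\tfrac{1}{2} \cot \alpha_k.
\end{equation*}

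Summing over the two triangles $T_1, T_2$ that share the interior edge $e$, with opposite angles $\alpha_1, \alpha_2 \in (0,\pi)$, gives
\begin{equation*}
k_{ij} = \tfrac{1}{2}\bigl(\cot \alpha_1 + \cot \alpha_2\bigr) = \frac{\sin(\alpha_1 + \alpha_2)}{2 \sin \alpha_1 \sin \alpha_2}.
\end{equation*}
Since $\sin \alpha_i > 0$, the condition $k_{ij} \geq 0$ is equivalent to $\sin(\alpha_1 + \alpha_2) \geq 0$, i.e., $\alpha_1 + \alpha_2 \leq \pi$, which is the asserted characterization. (For a boundary edge lying in a single triangle, the same computation produces $k_{ij} = \tfrac{1}{2} \cot \alpha_1$, so that \eqref{weakly-acute} additionally forces $\alpha_1 \leq \pi/2$ on such edges; the lemma as stated implicitly restricts attention to pairs of triangles sharing $e$.) The main obstacle is simply keeping track of signs in the cotangent derivation, since the two inward gradients on $T$ subtend the \emph{exterior} angle $\pi - \alpha_k$ at $x_k$ rather than $\alpha_k$; once that is handled, the sum-of-sines identity delivers the result immediately.
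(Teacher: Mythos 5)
Your proof is correct: the reduction to pairs of nodes sharing an edge, the per-triangle computation $\int_T \nabla\phi_i\cdot\nabla\phi_j\,dx = -\tfrac12\cot\alpha_k$, and the identity $\cot\alpha_1+\cot\alpha_2 = \sin(\alpha_1+\alpha_2)/(\sin\alpha_1\sin\alpha_2)$ together give exactly the stated equivalence. The paper itself does not prove this lemma (it recalls it as a classical fact, citing Ciarlet--Raviart and Strang--Fix), and your cotangent argument is precisely the standard proof in that literature; your parenthetical remark that boundary edges, lying in a single triangle, additionally require a non-obtuse opposite angle for \eqref{weakly-acute} to hold is a correct and worthwhile clarification of the lemma's scope.
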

Generalizations of Lemma \ref{weak_acuteness_2D} to three dimensions, involving interior dihedral angles of tetrahedra, can be found in \cite{Korotov_MC2001, Brandts_LAA2008}.  Note that a \emph{non-obtuse} mesh (one where \emph{all} interior angles are bounded by $90^\circ$) is automatically weakly-acute.

The method uses the following finite element spaces:
\begin{equation}\label{eqn:discrete_spaces}
\begin{split}
  \Sh &:= \{ s_h \in H^1(\Om) : s_h |_{T} \text{ is affine for all } T \in \Tk_h \}, \\
  \Uh &:= \{ \vu_h \in [H^1(\Om)]^d : \vu_h |_{T} \text{ is affine in
    each component for all } T \in \Tk_h \}, \\
  \Nh &:= \{ \vn_h \in \Uh : |\vn_h(x_i)| = 1 \text{ for all nodes $x_i \in \Nk_h$} \},
\end{split}
\end{equation}
where $\Nh$ imposes the unit length constraint \emph{at the vertices} of the mesh.

Let $I_h$ denote the piecewise linear Lagrange interpolation
operator on mesh $\Tk_h$ with values in either $\Sh$ or $\Uh$.
We have the following \emph{discrete} version of the admissible class:
\begin{equation}\label{admissibleclass_discrete}
%\begin{split}
    \Admis_h := \{ (s_h, \vu_h) \in \Sh \times \Uh : -1/2 < s_h < 1 \,\text{ in } \Om,
    \, \vu_h = I_h[s_h \vn_h] \text{ where } \vn_h \in \Nh \}.
%\end{split}
\end{equation}
Next, we let
$g_h := I_h g$ and $\vr_h := I_h \vr$ be the discrete Dirichlet data,
and introduce the discrete spaces that include (Dirichlet) boundary conditions
\begin{equation*}\label{eqn:discrete_spaces_BC}
\begin{split}
  \Sh (\bdys,g_h) &:= \{ s_h \in \Sh : s_h |_{\bdys} = g_h \}, \quad
  \Uh (\bdyvu,\vr_h) := \{ \vu_h \in \Uh : \vu_h |_{\bdyvu} = \vr_h \},
\end{split}
\end{equation*}
as well as the discrete admissible class with boundary conditions:
\begin{equation}\label{admissibleclass_discrete_BC}
  \Admis_h (g_h,\vr_h) := \left\{ (s_h,\vu_h) \in \Admis_h : s_h \in \Sh (\bdys,g_h),  \vu_h \in \Uh (\bdyvu,\vr_h) \right\}.
\end{equation}
In view of \eqref{gne0}, we can also impose the Dirichlet condition
$\vn_h = I_h[g_h^{-1} \vr_h]$ on $\dOm$.

Our discrete version of $E_1[ s , \vn]$ is ``derived'' by invoking
basic properties of the stiffness matrix entries $k_{ij}$.  First note
$
\sum_{j=1}^n k_{ij} = 0
$
for all $x_i \in \Nk_h$, and for piecewise linear $s_h = \sum_{i=1}^n s_h(x_i) \phi_i$ we have
\begin{align*}
  \int_{\Om} | \nabla s_h |^2 dx = -\sum_{i=1}^n k_{ii} s_h(x_i)^2 - \sum_{i, j = 1, i \neq j}^n k_{ij} s_h(x_i) s_h(x_j).
\end{align*}
Thus, using $k_{ii} = - \sum_{j \neq i} k_{ij}$ and the symmetry $k_{ij}=k_{ji}$, we get
\begin{equation}\label{eqn:dirichlet_integral_identity}
\begin{aligned}
  \int_{\Om} | \nabla s_h |^2 dx &= \sum_{i, j = 1}^n k_{ij} s_h(x_i) \big(s_h(x_i) - s_h(x_j)\big)
\\
&= \frac{1}{2} \sum_{i, j = 1}^n k_{ij} \big(s_h(x_i) - s_h(x_j)\big)^2 = \frac{1}{2} \sum_{i, j = 1}^n k_{ij} \big( \dij s_h \big)^2,
\end{aligned}
\end{equation}
where we define
\begin{equation}\label{eqn:delta_ij}
  \dij s_h := s_h(x_i) - s_h(x_j), \quad \dij \vn_h := \vn_h(x_i) - \vn_h(x_j).
\end{equation}
Therefore, we define the discrete energy to be
\begin{equation}\label{discrete_energy_E1}
\begin{split}
  E_1^h [s_h, \vn_h] := & \frac{\kappa}{2} \sum_{i, j = 1}^n k_{ij} \left( \dij s_h \right)^2 + \frac{1}{2} \sum_{i, j = 1}^n k_{ij} \left(\frac{s_h(x_i)^2 + s_h(x_j)^2}{2}\right) |\dij \vn_h|^2,
\end{split}
\end{equation}
where the second term is a first order approximation of $\iO s^2|\nabla \vn|^2$,
which is novel in the finite element literature \cite{Nochetto_SJNA2017}.
The double well energy is discretized in the usual way:
\begin{equation}\label{discrete_energy_E2}
  E_2^h [s_h] := \int_{\Om} \psi (s_h(x)) dx.
\end{equation}

The specific form of \eqref{discrete_energy_E1} lies on the fact
that it makes the nodal values of $s_h$ and $\vn_h$ \emph{readily
  accessible} for analysis.  The identity in
\eqref{auxiliary_energy_identity} is obtained (at the continuous
level) by taking advantage of the unit length constraint $|\vn| = 1$.
However, at the discrete level, we only impose the unit length
constraint at the \emph{nodes} of the mesh, and we cannot hope
for much more because $\vn_h$ is a piecewise polynomial.  Hence, in
order to obtain a similar identity to
\eqref{auxiliary_energy_identity} (see Lemma
\ref{lemma:energydecreasing} below), we need access to nodal values.
In \cite{Nochetto_SJNA2017}, we show that the discrete energy
\eqref{discrete_energy_E1}
allows us to handle the degenerate coefficient $s_h^2$ \emph{without} regularization.

%The second summation in \eqref{discrete_energy_E1} does \emph{not}
%come from applying the standard discretization of $\int_{\Om} s^2 |\nabla  \vn |^2 dx$
%by piecewise linear elements.

The discrete formulation is as follows.  Find $(s_h, \vn_h) \in \Sh (\bdys,g_h) \times \Nh (\bdyvn,\vr_h)$ such that the following energy is minimized:
\begin{equation}\label{discrete_energy}
  E^h[s_h, \vn_h] := E_1^h [s_h, \vn_h] + E_2^h [s_h].
\end{equation}

%---------------------------------------------------------------------------
\section{Review of the method}\label{sec:method}
%---------------------------------------------------------------------------

%---------------------------------------------------------------------------
\subsection{Energy inequality}\label{sec:energy_inequality}
%---------------------------------------------------------------------------

Our discrete energy \eqref{discrete_energy_E1} satisfies a discrete
version of \eqref{auxiliary_energy_identity}
\cite{Ambrosio_MM1990a,Lin_CPAM1991}, which is a key component
of our analysis in \cite{Nochetto_SJNA2017}. To see this, we introduce $\widetilde{s}_h:=I_h|s_h|$ and two
discrete versions of the vector field $\vu$
\begin{equation}\label{uh}
\vu_h:=I_h[s_h \vn_h] \in \Uh,
\quad
\widetilde\vu_h := I_h[\widetilde{s}_h \vn_h]\in\Uh.
\end{equation}
Note that both $(s_h, \vu_h)$, $(\widetilde{s}_h, \widetilde{\vu}_h)$
are in $\Admis_h (g_h,\vr_h)$. We now state a discrete
version of \eqref{auxiliary_energy_identity}.

\smallskip
\begin{lem}[discrete energy inequality]
\label{lemma:energydecreasing}
Let the mesh $\Tk_h$ satisfy \eqref{weakly-acute}.
If $(s_h,\vu_h)\in\Admis_h(g_h,\vr_h)$, then,
for any $\kappa > 0$, the discrete energy \eqref{discrete_energy_E1} satisfies
\begin{align}
  E_1^h [s_h, \vn_h]
  \geq
  (\kappa - 1) \iO |\nabla s_h|^2 dx + \iO |\nabla \vu_h|^2 dx
  =: \widetilde E_1^h[s_h,\vu_h],
  \label{energy_inequality}
\end{align}
as well as
\begin{align}\label{abs_inequality}
  E_1^h [s_h, \vn_h]
  \geq
  (\kappa - 1) \iO |\nabla \widetilde{s}_h|^2 dx + \iO |\nabla \widetilde{\vu}_h|^2 dx
  =: \widetilde E_1^h[\widetilde{s}_h,\widetilde\vu_h].
\end{align}
%\blue{where $\widetilde\vu_h := I_h(|s_h|\vn_h)$ and  $I_h$ denotes the piecewise linear Lagrange interpolation operator on mesh $\Tk_h$. }
\end{lem}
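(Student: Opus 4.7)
The plan is to prove the inequality by computing both sides using the nodal representation of $\int_\Omega |\nabla \vu_h|^2\,dx$ and exploiting $k_{ij}\ge 0$ for $i\ne j$. First I would observe that since $\vu_h$ is piecewise linear, the same derivation as in \eqref{eqn:dirichlet_integral_identity} yields the componentwise identity
\begin{equation*}
\int_\Omega |\nabla \vu_h|^2\,dx = \frac{1}{2}\sum_{i,j=1}^n k_{ij} |\dij \vu_h|^2,
\end{equation*}
and likewise for $\int_\Omega |\nabla s_h|^2\,dx$. Because $\vu_h = I_h[s_h \vn_h]$, at the nodes we have $\vu_h(x_i) = s_h(x_i)\vn_h(x_i)$, so using $|\vn_h(x_i)|^2 = |\vn_h(x_j)|^2 = 1$ and $\vn_h(x_i)\cdot \vn_h(x_j) = 1 - \tfrac12 |\dij \vn_h|^2$, a direct expansion gives the key algebraic identity
\begin{equation*}
|\dij \vu_h|^2 = (\dij s_h)^2 + s_h(x_i)\, s_h(x_j)\, |\dij \vn_h|^2.
\end{equation*}

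Next I would substitute this identity into the nodal formula for $\int_\Omega |\nabla \vu_h|^2\,dx$ and collect terms:
\begin{equation*}
(\kappa-1)\iO |\nabla s_h|^2\,dx + \iO |\nabla \vu_h|^2\,dx
= \frac{\kappa}{2}\sum_{i,j} k_{ij}(\dij s_h)^2 + \frac{1}{2}\sum_{i\ne j} k_{ij}\, s_h(x_i)\, s_h(x_j)\, |\dij \vn_h|^2.
\end{equation*}
Subtracting this from $E_1^h[s_h,\vn_h]$ in \eqref{discrete_energy_E1}, the $\kappa$-term cancels, and the remaining difference involves the elementary inequality
\begin{equation*}
\frac{s_h(x_i)^2 + s_h(x_j)^2}{2} - s_h(x_i)\, s_h(x_j) = \frac{1}{2}(\dij s_h)^2 \ge 0.
\end{equation*}
Hence the difference equals $\tfrac14 \sum_{i\ne j} k_{ij}\,(\dij s_h)^2\,|\dij \vn_h|^2$, which is non-negative precisely because the mesh is weakly acute, i.e.\ $k_{ij}\ge 0$ for $i\ne j$ by \eqref{weakly-acute}. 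This proves \eqref{energy_inequality}.

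For \eqref{abs_inequality}, the cleanest route is to first show the monotonicity $E_1^h[s_h,\vn_h] \ge E_1^h[\widetilde{s}_h,\vn_h]$ and then apply the already-established \eqref{energy_inequality} to the pair $(\widetilde{s}_h,\widetilde{\vu}_h)\in\Admis_h(g_h,\vr_h)$. Indeed, $\widetilde{s}_h(x_i)^2 = s_h(x_i)^2$ at every node, so the second term of $E_1^h$ is unchanged, while $|\dij \widetilde{s}_h| = \bigl||s_h(x_i)|-|s_h(x_j)|\bigr| \le |\dij s_h|$ by the reverse triangle inequality; combined once more with $k_{ij}\ge 0$ for $i\ne j$, this gives the monotonicity, and the conclusion follows.

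The main obstacle is really the identity expressing $|\dij \vu_h|^2$ in terms of $(\dij s_h)^2$ and $|\dij \vn_h|^2$ — this is the discrete analogue of the orthogonal decomposition $\nabla \vu = \vn\otimes\nabla s + s\nabla \vn$ used for \eqref{auxiliary_energy_identity}, and it works only because the unit-length constraint $|\vn_h|=1$ is imposed \emph{exactly at the nodes}, which are precisely the quantities exposed by the representation \eqref{eqn:dirichlet_integral_identity}. The rest is AM-GM together with the weak acuteness sign condition, both applied term by term.
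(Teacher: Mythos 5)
Your proof is correct and follows essentially the paper's route: the nodal identity $|\dij \vu_h|^2 = (\dij s_h)^2 + s_h(x_i)\,s_h(x_j)\,|\dij \vn_h|^2$ combined with \eqref{eqn:dirichlet_integral_identity} and $k_{ij}\ge 0$ for $i\ne j$ produces exactly the nonnegative residual $\mathcal{E}_h$ of \eqref{eqn:residual}, which is how \eqref{energy_inequality} is obtained in the paper (via \cite{Nochetto_SJNA2017}). Your derivation of \eqref{abs_inequality} through the intermediate monotonicity $E_1^h[s_h,\vn_h]\ge E_1^h[\widetilde{s}_h,\vn_h]$ is just a repackaging of the same reverse-triangle-inequality and weak-acuteness step that yields $\widetilde{\mathcal{E}}_h$ directly, so it is a sound and equivalent argument.
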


In fact, the following inequalities are valid \cite{Nochetto_SJNA2017}
\begin{equation}\label{eqn:energyequality}
  E_1^h[s_h, \vn_h] - \widetilde{E}_1^h[s_h,\vu_h] \ge \mathcal{E}_h,
  \qquad
  E_1^h[s_h, \vn_h] - \widetilde{E}_1^h[\widetilde{s}_h,\widetilde{\vu}_h]
  \ge \widetilde{\mathcal{E}}_h,
\end{equation}
where
\begin{equation}\label{eqn:residual}
  \mathcal{E}_h := \frac{1}{4} \sum_{i, j = 1}^n k_{ij} \big(\dij s_h
  \big)^2 \big| \dij \vn_h \big|^2,
  \qquad
  \widetilde{\mathcal{E}}_h := \frac{1}{4} \sum_{i, j = 1}^n k_{ij}
  \big(\dij \widetilde{s}_h \big)^2 \big| \dij \vn_h \big|^2.
\end{equation}
Note that $\mathcal{E}_h,\widetilde{\mathcal{E}}_h \geq 0$ because $k_{ij} \geq 0$
for $i \neq j$. We refer to \cite{Nochetto_SJNA2017} for further details.

%---------------------------------------------------------------------------
\subsection{Minimization scheme}\label{sec:min_algorithm}
%---------------------------------------------------------------------------

We summarize the discrete quasi-gradient flow scheme in \cite{Nochetto_SJNA2017} which we use to compute discrete minimizers.

%---------------------------------------------------------------------------
\subsubsection{Boundary conditions}
%---------------------------------------------------------------------------

In the continuous setting, Dirichlet boundary conditions are enforced in
the space. Let
\begin{equation}\label{eqn:dirichlet_BC}
  \Hbdy{\Gamma} := \{ v \in H^1(\Om) : v = 0  \text{ on } \Gamma \},
\end{equation}
where $\Gamma\subset\partial\Omega$ is either
$\bdys,\bdyvn$, and $s\in g + H^1_{\bdys}(\Omega)$,
$\vu \in \vr + [H^1_{\bdyvu}(\Omega)]^d$. We assume
$\bdyvn = \bdyvu \subset \bdys$ and \eqref{gne0} to be
valid on $\bdys$. The trace $\vn=\vq:=g^{-1}\vr$ is thus
well defined on $\bdyvn$.

The superscript $k$ will stand for an iteration
counter. Therefore, $s_h^k\in\Sh (\bdys,g_h)$ and $\vn_h^k\in\Nh (\bdyvn,\vq_h)$
indicate iterates satisfying Dirichlet boundary conditions where
$g_h := I_h g$ and $\vq_h := I_h\vq$.
We will further simplify the notation in some places upon writing:
\[
s_i^k := s_h^k(x_i),
\quad
\vn_i^k := \vn_h^k(x_i),
\quad
z_i := z_h(x_i),
\quad
\vv_i := \vv_h(x_i).
\]

%------------------------------------------------------------------------
\subsubsection{First order variation}
%------------------------------------------------------------------------

We start with the energy $E_1^h$.
Due to the unit length constraint at the nodes in $\Nh$ (see \eqref{eqn:discrete_spaces}), we introduce the space of discrete tangential variations:
\begin{equation}\label{eqn:discrete_tangent_variation_space}
\begin{split}
  \Yh (\vn_h) &= \{ \vv_h \in \Uh : \vv_h(x_i) \cdot \vn_h(x_i) = 0
  \text{ for all nodes } x_i \in \Nk_h \}.
\end{split}
\end{equation}
Next, the first order variation of $E^h_1[s_h^k, \vn_h^k]$ in the direction
  $\vv_h \in \Yh (\vn_h^k) \cap \Hbdy{\bdyvn}$ reads
\begin{equation}\label{first_order_discrete_variation_dvN}
\begin{split}
 \delta_{\vn_h} E^h_1 [s_h^k, \vn_h^k ; \vv_h] &=
 \sum_{i, j = 1}^n k_{ij} \left(\frac{(s_i^k)^2 + (s_j^k)^2 }{2}\right) ( \dij \vn_h^k ) \cdot ( \dij \vv_h ).
\end{split}
\end{equation}
The first order variation of $E^h_1[s_h^k, \vn_h^k]$ in the direction
$z_h \in \Sh \cap \Hbdy{\bdys}$ consists of two terms
\begin{equation}\label{first_order_discrete_variation_dS}
\begin{split}
 \delta_{s_h} E^h_1 [s_h^k ,\vn_h^{k} ; z_h] &=
 \kappa \sum_{i, j = 1}^n k_{ij} \left( \dij s_h^k \right) \left( \dij z_h \right)
 + \sum_{i, j = 1}^n k_{ij}  |\dij \vn_h^{k}|^2 \left(\frac{s_i^k z_i+ s_j^k z_j}{2}\right) .
\end{split}
\end{equation}

We next consider the energy $E_2^h$.
In order to guarantee a monotonically energy decreasing scheme, we employ the convex splitting technique in \cite{Wise_SJNA2009, Shen_DCDS2010, Shen_SJSC2010}, i.e. we split the double well potential $\psi$ into a convex and concave part.
Let $\psi_c $ and $\psi_e$ be both convex for all $s \in (-1/2, 1)$
so that $\psi (s) = \psi_c(s) - \psi_e(s)$,
and set
\begin{align}\label{variationE2}
 \delta_{s_h} E^h_2 [ s_h^{k+1}; z_h ] := \iO \big[ \psi_c'(s_h^{k+1}) - \psi_e'(s_h^{k}) \big] z_h dx,
\end{align}
which yields the inequality
\begin{equation}\label{convex_split_inequality}
  \iO \psi(s_h^{k+1}) dx - \iO \psi(s_h^{k})  dx  \leq \delta_{s_h} E^h_2 [ s_h^{k+1}; s_h^{k+1} - s_h^k ],
\end{equation}
for any $s_h^{k}$ and $s_h^{k+1}$ in $\Sh$ \cite{Nochetto_SJNA2017}.  Note that
\begin{align*}
\delta_{\vn_h} E^h[s_h^k, \vn_h^k; \vv_h] &= \delta_{\vn_h} E^h_1[s_h^k, \vn_h^k; \vv_h], \\
\delta_{s_h} E^h[s_h^k, \vn_h^k; z_h] &= \delta_{s_h} E_1^h[s_h^k, \vn_h^k; z_h] + \delta_{s_h} E_2^h[s_h^k, \vn_h^k; z_h].
\end{align*}

%---------------------------------------------------------------------------
\subsubsection{Discrete quasi-gradient flow algorithm}\label{algorithm}
%---------------------------------------------------------------------------

Our scheme for minimizing $E^h [ s_h ,\vn_h]$,
defined in \eqref{discrete_energy},
is an alternating direction method, which minimizes with respect to
$\vn_h$ and evolves $s_h$ separately in the steepest descent direction during each
iteration. Therefore, this algorithm is not a standard gradient flow
but rather a quasi-gradient flow.

\medskip\noindent
{\bf Algorithm:}
Given $(s_h^{0}, \vn_h^0)$ in $\Sh (\bdys,g_h) \times \Nh (\bdyvn,\vq_h)$,
iterate Steps (a)-(c) for $k\ge0$.

\smallskip\noindent
{\bf Step (a): Minimization.} Find $\vt_h^k\in\Yh (\vn_h^k) \cap \Hbdy{\bdyvn}$ such that $\vn_h^k + \vt_h^k$ minimizes the energy
$
E^h [ s_h^k ,\vn_h^k + \vv_h]
$
for all $\vv_h$ in $\Yh (\vn_h^k) \cap \Hbdy{\bdyvn}$, i.e. $\vt_h^k$ satisfies
\begin{align*}
\delta_{\vn_h} E^h [s_h^k ,\vn_h^k + \vt_h^k; \vv_h] = 0, \quad\forall \vv_h \in \Yh (\vn_h^k) \cap \Hbdy{\bdyvn}.
\end{align*}
{\bf Step (b): Projection.} Normalize
$\vn_i^{k+1} := \frac{ \vn_i^k +   \vt_i^k } { | \vn_i^k + \vt_i^k  | }$ at all nodes $x_i \in \Nk_h$.

\noindent
{\bf Step (c): Gradient flow.} Using $(s_h^k, \vn_h^{k+1})$, find $s_h^{k+1}$ in $\Sh (\bdys,g_h)$ such that
\begin{align*}
%\iO  \frac{s_h^{k+1} - s_{h^{k}}{\delta t} z_h = - \delta_{s_h} E^h_1 [s_h^{k+1} ,\vn_h^{k+1} ; z_h] - \delta_{s_h} E^h_2 [s_h^{k+1} ; z_h], ~\forall z_h \in \Sh \cap \Hbdy{\bdys}.
\iO \frac{s_h^{k+1} - s_h^{k}}{\delta t} z_h = - \delta_{s_h} E^h [s_h^{k+1} ,\vn_h^{k+1} ; z_h], \quad\forall z_h \in \Sh \cap \Hbdy{\bdys}.
\end{align*}
In the numerical experiments, we impose Dirichlet boundary conditions for both $s_h^k$ and $\vn_h^k$ on subsets of the boundary. Note that the scheme has no restriction on the time step thanks to the implicit Euler method in Step (c).

The quasi-gradient flow scheme has a monotone energy decreasing property, provided the mesh $\Tk_h$ is weakly acute \eqref{weakly-acute} \cite{Ciarlet_CMAME1973, Strang_FEMbook2008}.
\begin{thm}[monotonicity \cite{Nochetto_SJNA2017}]
  \label{energydecreasing}
Let $\Tk_h$ satisfy \eqref{weakly-acute}. The iterate
$(s_h^{k+1}, \vn_h^{k+1})$ of the Algorithm (discrete quasi-gradient
flow) of Section \eqref{algorithm} exists and satisfies
  \[
    E^h [s_h^{k+1} ,\vn_h^{k+1}] \leq E^h [s_h^k ,\vn_h^k ] - \frac{1}{\delta t} \iO (s_h^{k+1} - s_h^k)^2 dx.
  \]
  Equality holds if and only if $(s_h^{k+1}, \vn_h^{k+1}) = (s_h^k,\vn_h^k)$
  (equilibrium state).
\end{thm}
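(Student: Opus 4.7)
The argument splits into three parts: (i) well-posedness of Steps (a)--(c); (ii) energy decrease across the $\vn_h$ update; (iii) energy decrease across the $s_h$ update. I would handle them in that order and then combine.

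\emph{Existence.} For Step (a), the map $\vv_h\mapsto E_1^h[s_h^k,\vn_h^k+\vv_h]$ is a (inhomogeneous) quadratic functional on the linear space $\Yh(\vn_h^k)\cap H^1_{\bdyvn,0}(\Om)$ whose Hessian $\sum_{ij}k_{ij}\tfrac{(s_i^k)^2+(s_j^k)^2}{2}(\dij\vv_h)\cdot(\dij\vw_h)$ is positive semidefinite by weak acuteness \eqref{weakly-acute}. Coercivity (hence existence) follows from the Poincar\'e inequality on $\Yh(\vn_h^k)\cap H^1_{\bdyvn,0}(\Om)$ together with the quadratic growth coming from the $\kappa|\nabla s|^2$-type term being absent in the $\vn_h$ variation--actually the relevant coercivity comes from the Hessian restricted to the tangent space; this is the only place one needs to work a bit. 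For Step (b), since $\vt_i^k\perp\vn_i^k$ and $|\vn_i^k|=1$, we have $|\vn_i^k+\vt_i^k|^2=1+|\vt_i^k|^2\ge1$, so the nodal projection is well defined. For Step (c), the map $s_h^{k+1}\mapsto\delta t^{-1}(s_h^{k+1}-s_h^k,z_h)+\delta_{s_h}E^h_1[s_h^{k+1},\vn_h^{k+1};z_h]+(\psi'_c(s_h^{k+1}),z_h)$ is strictly monotone because $E_1^h[\cdot,\vn_h^{k+1}]$ is convex quadratic in $s_h$ (the $|\dij\vn_h^{k+1}|^2$ coefficients being nonnegative), $\psi_c$ is convex, and $\delta t^{-1}$ adds strong monotonicity. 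Existence and uniqueness of $s_h^{k+1}\in\Sh(\bdys,g_h)$ follow.

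\emph{Energy decrease from the director update.} Fixing $s_h^k$, the minimization in Step (a) yields
\[
E^h[s_h^k,\vn_h^k+\vt_h^k]\le E^h[s_h^k,\vn_h^k],
\]
with equality iff $\vt_h^k=0$ (strict convexity of the Hessian on the tangent space, modulo a kernel argument). The key step is then to show that nodal projection does not increase $E^h$, i.e.
\[
E_1^h[s_h^k,\vn_h^{k+1}]\le E_1^h[s_h^k,\vn_h^k+\vt_h^k].
\]
Because $E_2^h$ does not depend on $\vn_h$ and $k_{ij}\ge0$, it suffices to prove the pointwise pairwise inequality
\[
|\vn_i^{k+1}-\vn_j^{k+1}|^2\le|(\vn_i^k+\vt_i^k)-(\vn_j^k+\vt_j^k)|^2
\]
whenever $|\vn_i^k+\vt_i^k|,|\vn_j^k+\vt_j^k|\ge1$; this is the standard ``projection onto the sphere is a contraction for vectors outside the unit ball'' identity and follows from a short computation using $(a-b)^2\ge0$ for $a=|\vn_i^k+\vt_i^k|$, $b=|\vn_j^k+\vt_j^k|$. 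Multiplying by the nonnegative weights $k_{ij}\tfrac{(s_i^k)^2+(s_j^k)^2}{2}$ and summing gives the displayed inequality.

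\emph{Energy decrease from the degree-of-orientation update.} With $\vn_h^{k+1}$ fixed, $E_1^h[\cdot,\vn_h^{k+1}]$ is convex quadratic, so
\[
E_1^h[s_h^{k+1},\vn_h^{k+1}]-E_1^h[s_h^k,\vn_h^{k+1}]\le\delta_{s_h}E_1^h[s_h^{k+1},\vn_h^{k+1};s_h^{k+1}-s_h^k],
\]
while the convex-splitting inequality \eqref{convex_split_inequality} yields
\[
E_2^h[s_h^{k+1}]-E_2^h[s_h^k]\le\delta_{s_h}E_2^h[s_h^{k+1};s_h^{k+1}-s_h^k].
\]
Adding these and testing the Step (c) equation with $z_h=s_h^{k+1}-s_h^k\in\Sh\cap H^1_{\bdys,0}(\Om)$ gives
\[
E^h[s_h^{k+1},\vn_h^{k+1}]-E^h[s_h^k,\vn_h^{k+1}]\le-\tfrac{1}{\delta t}\iO(s_h^{k+1}-s_h^k)^2\,dx.
\]

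\emph{Combination and equality case.} Chaining the three bounds yields the stated energy inequality. For equality, the $s_h$ step forces $s_h^{k+1}=s_h^k$; plugging back into the $\vn_h$ inequalities forces the projection step to preserve energy (hence $\vn_h^k+\vt_h^k$ is already unit length at every node, so $\vt_h^k=0$) and Step (a) to return $\vt_h^k=0$, which is exactly $(s_h^{k+1},\vn_h^{k+1})=(s_h^k,\vn_h^k)$. The main obstacle is the projection-contraction inequality for the piecewise weighted sum: it is elementary once phrased per edge, but the accounting requires weak acuteness in an essential way to guarantee that the nonnegative per-edge bound survives summation.
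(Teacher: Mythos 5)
Your argument is correct and takes essentially the same route as the paper's proof (given for the extended Theorem \ref{thm:energydecreasing_general}): minimize in the tangent space, show the nodal normalization is edgewise non-expansive for vectors of norm at least one and sum against the nonnegative weights $k_{ij}$ supplied by weak acuteness, then combine the quadratic structure of $E_1^h$ in $s_h$, the convex-splitting bound \eqref{convex_split_inequality}, and the Step (c) equation tested with $z_h = s_h^{k+1}-s_h^k$. The only soft spot is the appeal to Poincar\'e-type coercivity in Step (a) — when $s_h^k$ vanishes at many nodes the Hessian is only positive semidefinite — but existence still holds because the functional is a convex quadratic bounded below on a finite-dimensional space (the paper sidesteps this degeneracy with the $\rho$-augmented energy \eqref{modified_energy}).
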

We extend this result in Theorem \ref{thm:energydecreasing_general} to include additional energy terms; see \eqref{discrete_energy_total}.

%---------------------------------------------------------------------------
\section{Implementation}\label{sec:implement}
%--------------------------------------------------------------------------

We implemented our method using the MATLAB/C++ finite element toolbox
FELICITY \cite{FELICITY_REF}.  In this section, we give details on
forming the ensuing discrete systems, and how to solve part (a) of the quasi-gradient flow algorithm in 3-D using the tangent space.  For all 3-D simulations, we used the algebraic multi-grid solver (AGMG) \cite{Notay_ETNA2010, Napov_NLAA2011, Napov_SISC2012, Notay_SISC2012} to solve the linear systems in parts (a) and (c) of the quasi-gradient flow algorithm.  In 2-D, we simply used the ``backslash'' command in MATLAB.

%---------------------------------------------------------------------------
\subsection{Finite element matrices}
%--------------------------------------------------------------------------

Implementing the algorithm requires construction of the discrete
energy, as well as its variational derivative.  This requires the
symmetric mass and stiffness finite element matrices:
$M := (m_{ij})^n_{i,j = 1}$, $K := (-k_{ij})^n_{i,j = 1}$, where
\begin{equation}\label{eqn:discrete_FE_matrices}
\begin{split}
     m_{ij} &= \iO \phi_i \, \phi_j, \quad k_{ij} = -\iO \nabla \phi_i \cdot \nabla \phi_j,
\end{split}
\end{equation}
and $\{ \phi_i \}^n_{i=1}$ is the set of basis functions of the space $\Sh$.

%---------------------------------------------------------------------------
\subsection{Finite element functions and coefficient vectors}
%--------------------------------------------------------------------------

The function $s_h$ is represented by a linear combination of $\{ \phi_i \}^n_{i=1}$.  If the dimension of $\Om$ is $d$, then the vector field $\vn_h$ has $d$ components, where each component is written as a linear combination of $\{ \phi_i \}^n_{i=1}$.  The nodal values of $s_h$ and $\vn_h$, at node $x_i$, are denoted by $s_i$ and $\vn_i$.

The corresponding coefficient vectors (arrays) are denoted with non-italicized capital letters, i.e. $\coefS \in \R^n$, $\coefN \in \R^{d n}$, such that
\begin{equation}\label{eqn:coefficient_vectors}
  \coefS = [\coefS_1, \coefS_2, ..., \coefS_n ]\tp, \quad \coefN = [\coefN_1, \coefN_2, ..., \coefN_n, \coefN_{n+1}, ... \coefN_{dn}]\tp,
\end{equation}
where $\coefS_i := s_i$ and
\begin{equation}\label{eqn:coefN_correspondance}
\begin{split}
  \coefN_i &= \vn_i \cdot \ve_1, \\
  \coefN_{i+n} &= \vn_i \cdot \ve_2, \\
  \coefN_{i+2n} &= \vn_i \cdot \ve_3, \\
  & \vdots \\
  \coefN_{i + (k-1) n} &= \vn_i \cdot \ve_k, \\
  & \vdots \\
  \coefN_{i + (d-1) n} &= \vn_i \cdot \ve_d,
\end{split}
\end{equation}
for $1 \leq i \leq n$, where $\{ \ve_i \}^d_{i=1}$ are the canonical basis vectors of $\R^d$.  In other words, we store the coefficients of $\vn_h$ so that the $\ve_1$ components are first, followed by the $\ve_2$ components, and so on. Therefore,
\begin{equation*}
  \vn_h(x_i) \equiv \vn_i = (\coefN_i, \coefN_{i+n}, \coefN_{i+2n}, ..., \coefN_{i+(d-1)n})\tp, \quad \text{for all } 1 \leq i \leq n.
\end{equation*}

%------------------------------------------------------------------------
\subsection{Discrete variations}%\label{sec:}
%------------------------------------------------------------------------

Let us write $\delta_{s_h} E^h_1 [\Omega, s_h, \vn_h ; z_h]$ from \eqref{first_order_discrete_variation_dS} in a different form:
\begin{equation}\label{eqn:discrete_sub_energy_delta_S_temp}
\begin{split}
  \delta_{s_h} E^h_1 [s_h, \vn_h ; z_h] &= 2 \kappa \coefZ\tp K \coefS + \frac{1}{2} \sum^{n}_{i,j=1} (A(\vn_h))_{ij} \left( \coefS_i \coefZ_i + \coefS_j \coefZ_j \right),
\end{split}
\end{equation}
where $\coefZ \in \R^n$ is the coefficient vector corresponding to $z_h \in \Sh$, and $A(\vn_h) \equiv (A(\vn_h))^{n}_{i,j = 1}$ is the symmetric matrix defined by
\begin{equation}\label{eqn:modified_K_vN_matrix_for_S}
\begin{split}
    (A(\vn_h))_{ij} &= k_{ij} \sum^{d}_{r=1} (\coefN_{i + (r-1)n} - \coefN_{j + (r-1)n})^2.
\end{split}
\end{equation}

\begin{lem}\label{prop:diag_matrix_identity}
Let $A$ be an arbitrary $n \times n$ matrix, and $\coefY$, $\coefZ$ be arbitrary $n \times 1$ column vectors.  Then
\begin{equation}\label{eqn:diag_matrix_identity}
\begin{split}
    \sum^n_{i,j = 1} (A)_{ij} \coefY_i \coefZ_i = \sum^n_{i=1} \coefY_i \coefZ_i \underbrace{\sum^n_{j=1} (A)_{ij}}_{=:(\widehat{A})_{i}} = \coefZ\tp [\mathrm{diag} (\widehat{A})] \coefY,
\end{split}
\end{equation}
where $\widehat{A}$ is a $n \times 1$ column vector and $[\mathrm{diag} (\widehat{A})]$ is a $n \times n$ diagonal matrix formed from $\{ (\widehat{A})_{1}, (\widehat{A})_{2}, ..., (\widehat{A})_{n} \}$.
\end{lem}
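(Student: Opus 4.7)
The statement is a straightforward rearrangement identity, so my proof plan is essentially a two-step reindexing argument with no substantive obstacle; the main thing to make sure of is that the three expressions really are equal as written, and that the definition of $\widehat{A}$ is used correctly.

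First I would handle the leftmost equality. In the double sum $\sum_{i,j=1}^n (A)_{ij}\coefY_i\coefZ_i$, the factor $\coefY_i\coefZ_i$ is independent of the summation index $j$, so it may be pulled outside the inner sum:
\begin{equation*}
\sum_{i,j=1}^n (A)_{ij}\coefY_i\coefZ_i
= \sum_{i=1}^n \coefY_i\coefZ_i \sum_{j=1}^n (A)_{ij}.
\end{equation*}
By the definition of $\widehat{A}$ given in the statement, $(\widehat{A})_i = \sum_{j=1}^n (A)_{ij}$, which yields the middle expression of \eqref{eqn:diag_matrix_identity}.

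Next I would handle the rightmost equality. Let $D := [\mathrm{diag}(\widehat{A})]$, so $D_{ii} = (\widehat{A})_i$ and $D_{ik} = 0$ for $i \ne k$. Then by the standard formula for a bilinear form,
\begin{equation*}
\coefZ\tp D \coefY = \sum_{i,k=1}^n \coefZ_i D_{ik} \coefY_k
= \sum_{i=1}^n \coefZ_i (\widehat{A})_i \coefY_i,
\end{equation*}
which matches $\sum_{i=1}^n \coefY_i\coefZ_i (\widehat{A})_i$ since scalar multiplication commutes. Chaining the two equalities gives \eqref{eqn:diag_matrix_identity}, completing the proof.

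There is no real obstacle here, and no hypothesis on $A$ (symmetry, sign, etc.) is needed beyond being an $n\times n$ matrix; the identity is purely algebraic. The only thing worth flagging in the write-up is that the summand $\coefY_i\coefZ_i$ only carries the index $i$, which is precisely why the inner sum collapses to the row sum $\widehat{A}_i$ and produces the diagonal structure on the right-hand side.
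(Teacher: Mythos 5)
Your proof is correct and follows exactly the argument the paper intends (the paper leaves this lemma without a written proof, the justification being precisely the reindexing indicated by the underbrace in the statement). Pulling $\coefY_i\coefZ_i$ out of the $j$-sum and then identifying the result as the bilinear form $\coefZ\tp[\mathrm{diag}(\widehat{A})]\coefY$ is the same elementary computation, so nothing further is needed.
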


Continuing with \eqref{eqn:discrete_sub_energy_delta_S_temp}, and using \eqref{eqn:diag_matrix_identity}, we get
\begin{equation}\label{eqn:discrete_sub_energy_delta_S}
\begin{split}
  \delta_{s_h} E^h_1 [s_h, \vn_h ; z_h] &= 2 \kappa \coefZ\tp K \coefS + \frac{1}{2} \sum^{n}_{i,j=1} (A(\vn_h))_{ij} \coefS_i \coefZ_i + \frac{1}{2} \sum^{n}_{i,j=1} (A(\vn_h))_{ij} \coefS_j \coefZ_j, \\
  &= 2 \kappa \coefZ\tp K \coefS + \coefZ\tp D(\vn_h) \coefS,
\end{split}
\end{equation}
\begin{equation*}
  \text{where} \quad D(\vn_h) = \mathrm{diag} (\widehat{A(\vn_h)}), \quad (\widehat{A(\vn_h)})_{i} = \sum^{n}_{j=1} (A(\vn_h))_{ij}.
\end{equation*}

Next, we write out $\delta_{\vn_h} E^h_1 [s_h, \vn_h; \vv_h]$ from \eqref{first_order_discrete_variation_dvN} in a different form:
\begin{equation}\label{eqn:discrete_sub_energy_delta_vN_temp_1}
\begin{split}
  \delta_{\vn_h} E^h_1 [s_h, \vn_h; \vv_h] &= \sum^{d}_{r=1} \underbrace{\sum^n_{i,j=1} k_{ij} \left( \frac{\coefS_i^2 + \coefS_j^2}{2} \right) \Theta_{ij}(r)}_{\Xi_r :=},
\end{split}
\end{equation}
where we defined
\begin{equation}\label{eqn:discrete_sub_energy_delta_vN_temp_2}
\begin{split}
  \Theta_{ij}(r) := \left( \coefN_{i + (r-1) n} - \coefN_{j + (r-1) n} \right) \cdot \left( \coefV_{i + (r-1) n} - \coefV_{j + (r-1) n} \right),
\end{split}
\end{equation}
with $\coefV \in \R^{dn}$ the coefficient vector corresponding to $\vv_h \in \Uh$.

Let us now focus on $\Xi_1$:
\begin{equation}\label{eqn:discrete_sub_energy_delta_vN_temp_3}
\begin{split}
  \Xi_1 &= \sum^{n}_{i,j=1} k_{ij} \left( \frac{\coefS_i^2 + \coefS_j^2}{2} \right) \left( \coefN_i - \coefN_j \right) \cdot \left( \coefV_i - \coefV_j \right), \\
  &= \frac{1}{2} \sum^{n}_{i,j=1} (\widetilde{\vA}(s_h))_{ij} \left( \coefN_i \cdot \coefV_i - \coefN_i \cdot \coefV_j - \coefN_j \cdot \coefV_i + \coefN_j \cdot \coefV_j \right),
\end{split}
\end{equation}
where $\widetilde{\vA}(s_h)$ is the $n \times n$ symmetric matrix defined by
\begin{equation}\label{eqn:modified_K_vN_matrix_for_vN}
  \widetilde{\vA}(s_h) \equiv
     ((\widetilde{\vA}(s_h))_{ij})^{n}_{i,j = 1},
     \quad (\widetilde{\vA}(s_h))_{ij} = k_{ij} \left( \coefS_i^2 + \coefS_j^2 \right).
\end{equation}
Using symmetry gives
\begin{equation}\label{eqn:discrete_sub_energy_delta_vN_temp_4}
\begin{split}
  \Xi_1 &= \frac{1}{2} \sum^{n}_{i,j=1} 2 (\widetilde{\vA}(s_h))_{ij} \left( \coefN_i \cdot \coefV_i - \coefN_j \cdot \coefV_i \right) \\
  &= \sum^{n}_{i,j=1} (\widetilde{\vA}(s_h))_{ij} \left( \coefN_i \cdot \coefV_i \right) - \sum^{n}_{i,j=1} (\widetilde{\vA}(s_h))_{ij} \left( \coefN_j \cdot \coefV_i \right) \\
  &= \coefV(1:n)\tp \widetilde{\vD}(s_h) \coefN(1:n) - \coefV(1:n)\tp \widetilde{\vA}(s_h) \coefN(1:n),
\end{split}
\end{equation}
where $\coefV(1:n)$ denotes the first $n$ components of $\coefV$, etc., and
\begin{equation}\label{eqn:tilde-D}
  \widetilde{\vD}(s_h) = \mathrm{diag} (\widehat{\widetilde{\vA}(s_h)}), \quad (\widehat{\widetilde{\vA}(s_h)})_{i} = \sum^{n}_{j=1} (\widetilde{\vA}(s_h))_{ij}.
\end{equation}

Applying the same argument to $\Xi_r$, we get
\begin{equation}\label{eqn:discrete_sub_energy_delta_vN_temp_5}
\begin{split}
  \Xi_r &= \coefV((r-1)n + 1:rn)\tp \widetilde{\vD}(s_h) \coefN((r-1)n + 1:rn) \\
  &\quad - \coefV((r-1)n + 1:rn)\tp \widetilde{\vA}(s_h) \coefN((r-1)n + 1:rn),
\end{split}
\end{equation}
Therefore, recalling \eqref{eqn:discrete_sub_energy_delta_vN_temp_1}, we obtain
\begin{equation}\label{eqn:discrete_sub_energy_delta_vN}
\begin{split}
  \delta_{\vn_h} E^h_1 [s_h, \vn_h; \vv_h] &= \coefV\tp \vD(s_h) \coefN - \coefV\tp \vA(s_h) \coefN,
\end{split}
\end{equation}
where $\vD(s_h)$ is $d n \times d n$ block diagonal (with $d$
identical blocks), where each block equals $\widetilde{\vD}(s_h)$
defined in \eqref{eqn:tilde-D}; similarly, $\vA(s_h)$ is $d n
\times d n$ block diagonal (with $d$ identical blocks), where each
block equals $\widetilde{\vA}(s_h)$
defined in \eqref{eqn:modified_K_vN_matrix_for_vN}.

%------------------------------------------------------------------------
\subsection{Discrete quasi-gradient flow}%\label{sec:}
%------------------------------------------------------------------------

Given $(s_h^{k}, \vn_h^k)$, we have the corresponding coefficient
vectors $(\coefS^{k}, \coefN^{k})$. We now rewrite the Algorithm
in Section \ref{algorithm} in terms of the matrices and
vectors introduced earlier.

\textbf{Step (a):}
By \eqref{eqn:discrete_sub_energy_delta_vN}, we solve the following linear system \emph{in the tangent space} (see Section \ref{sec:tangent_variations}) to obtain $\vt_h^k$:
\begin{equation}\label{eqn:Algorithm_step_a_FE_matrices}
\begin{split}
    \left[ \vD(s_h^k) - \vA(s_h^k) \right] \coefT^k &= - \left[ \vD(s_h^k) - \vA(s_h^k) \right] \coefN^k,
\end{split}
\end{equation}
where $\coefT^k$ in $\R^{d n}$ is the coefficient vector corresponding to $\vt_h^k$ in $\Yh (\vn_h^k) \cap \Hbdy{\bdyvn}$.  Note that \eqref{eqn:Algorithm_step_a_FE_matrices} must be modified to enforce Dirichlet boundary conditions (if necessary).

\begin{remark}[solving a degenerate system]\label{rem:how_to_solve_degenerate_system}

The system matrix in \eqref{eqn:Algorithm_step_a_FE_matrices} is symmetric positive semi-definite, which is easily verified from the properties of $\vD(s_h^k)$.  Moreover, it is positive definite if $|s_h^k| > 0$ everywhere.  Hence, the system can be solved by any method for symmetric positive definite matrices.

When $s_h^k = 0$ at a sufficient number of nodes, the matrix will be
singular.  In this case, one could use a conjugate gradient method
\cite{GolubVanLoan_book2013};
note that the right-hand-side of \eqref{eqn:Algorithm_step_a_FE_matrices}
is guaranteed to be in the column space of the system matrix.

In 3-D, we solve \eqref{eqn:Algorithm_step_a_FE_matrices} using AGMG \cite{Notay_ETNA2010, Napov_NLAA2011, Napov_SISC2012, Notay_SISC2012}, which has the following condition: all the diagonal entries of the matrix must be positive.  If this is not the case, we must modify the system matrix in \eqref{eqn:Algorithm_step_a_FE_matrices} accordingly.  This is most easily done by using the minimizing movement strategy described in Section \ref{sec:min_movements}, which effectively adds an identity matrix (with small weight $\mmcoef > 0$) to the system matrix in \eqref{eqn:Algorithm_step_a_FE_matrices}.

\end{remark}

\textbf{Step (b):} apply the normalization step at all nodes to obtain $\vn_h^{k+1}$, i.e.
\begin{equation*}
\begin{split}
  \text{(1):}& ~~ \coefW := \coefN^k + \coefT^k, \\
  \text{(2):}& ~~ \alpha_i := \left( \coefW_i^2 + \coefW_{i + n}^2 + \cdots + \coefW_{i + (d-1)n}^2 \right)^{1/2}, ~ \text{for all } 1 \leq i \leq n, \\
  \text{(3):}& ~~ \coefN^{k+1}_{i + (r-1)n} := \coefW_{i + (r-1)n} / \alpha_i, ~ \text{for all } 1 \leq i \leq n, \text{ and } r = 1,2,...,d,
\end{split}
\end{equation*}
where $\coefN^{k+1}$ is the coefficient vector corresponding to $\vn_h^{k+1}$.

\textbf{Step (c):} Use the following convex splitting of the double well: $\psi(s) = \psi_c(s) - \psi_e(s)$, where we choose
\begin{equation*}
  \psi_c(s) = c_0 s^2, \qquad \psi_e(s) = c_0 s^2 - \psi(s),
\end{equation*}
and select $c_0 > 0$ large enough to ensure that $\psi_c(s)$, $\psi_e(s)$ are convex for all $-1/2 < s < 1$.  Recall \eqref{variationE2} and note that $\psi_c'(s)$ is linear.  Hence, we can write
\begin{equation*}
\begin{split}
    \delta_{s_h} E^h_2 [s_h^{k+1}; z_h ] &= \iO [ \psi_c'(s_h^{k+1}) - \psi_e'(s_h^{k}) ] z_h dx \\
    &= 2 c_0 \coefZ\tp M \coefS^{k+1} - \coefZ\tp \coefB(s_h^{k}),
\end{split}
\end{equation*}
where $\coefZ$ is the coefficient vector corresponding to $z_h$ in $\Sh \cap \Hbdy{\bdys}$, and $\coefB(s_h^{k}) = (\coefB_1, ..., \coefB_n)\tp$ in $\R^{n}$ is a column vector defined by
\begin{equation}\label{eqn:explicit_double_well_split}
  \coefB_i = \iO \psi_e'(s_h^{k}) \phi_i dx, \quad \text{for all } 1 \leq i \leq n.
\end{equation}

Therefore, using \eqref{eqn:discrete_sub_energy_delta_S}, we solve the following linear system for $\coefS^{k+1}$:
\begin{equation}\label{eqn:Algorithm_step_c_FE_matrices}
\begin{split}
    \left[ M + 2 \dt \kappa K + \dt D(\vn_h^{k+1}) + 2 c_0 \dt M \right] \coefS^{k+1} = M \coefS^{k} + \dt \coefB(s_h^{k}),
\end{split}
\end{equation}
where $\coefS^{k+1}$ is the coefficient vector corresponding to $s_h^{k+1}$.  Note that \eqref{eqn:Algorithm_step_c_FE_matrices} must be modified to enforce Dirichlet boundary conditions.

%------------------------------------------------------------------------
\subsection{Tangential variations}\label{sec:tangent_variations}
%------------------------------------------------------------------------

Solving Step (a) of the Algorithm requires a tangential basis for the test function and the solution.  However, forming the matrix system is easily done by \emph{first ignoring} the tangential variation constraint (i.e. arbitrary variations), followed by a simple modification of the matrix system.  For a concrete realization of the procedure, we consider the case $d=3$.

Let $A \coefT^{k} = \coefC$ represent the linear system in Step (a) (ignoring the tangent space constraint), where $A$ is a $d n \times d n$ matrix, and $\coefT^{k}$, $\coefC$ are column vectors in $\R^{d n}$. Note that the solution vector $\coefT^{k}$ is the coefficient vector associated with the finite element function $\vt_h^{k}$ in $\Yh (\vn_h^{k})$ (recall Section \ref{algorithm}).

Multiplying the linear system by a column vector $\coefV$ in $\R^{d n}$, we seek to find $\coefT^{k}$ in $\R^{d n}$ such that
\begin{equation}\label{eqn:discrete_variational_matrix_eqn}
  \coefV\tp A \coefT^{k} = \coefV\tp \coefC, \quad \text{for all } \coefV \in \R^{d n}.
\end{equation}
Next, using $\vn_h^{k}$ in $\Nh$, find $\vq_h^k$, $\vw_h^k$ in $\Yh (\vn_h^{k})$ such that $\{ \vn_h^{k}(x_i), \vq_h^k(x_i), \vw_h^k(x_i) \}$ forms an orthonormal basis of $\R^3$ at each node $x_i$, i.e. find an orthonormal basis of $\Uh$.  Let $\coefQ^{k}$, $\coefW^{k}$ in $\R^{d n}$ be the coefficient vectors associated with $\vq_h^k$, $\vw_h^k$.

Since $\vt_h^{k}$ is in the tangent space, we can expand $\coefT^{k}$ as
\begin{equation}\label{eqn:vt_tangent_expansion}
  \coefT^{k}_{i + (r-1)n} = \coefF_i \coefQ^{k}_{i + (r-1)n} + \coefG_i \coefW^{k}_{i + (r-1)n}, \quad \text{for all } 1 \leq i \leq n, 1 \leq r \leq 3,
\end{equation}
where $\coefF = (\coefF_1, ..., \coefF_n)\tp$, $\coefG = (\coefG_1, ..., \coefG_n)\tp$ are unknown solution (column) vectors in $\R^{n}$.  With this, we can write the expansion of $\coefT^{k}$ as
\begin{equation}\label{eqn:coef_soln_vector_tangent_decomp}
  \coefT^{k} =
\left[
  \begin{array}{c}
    D_1 (\coefQ^{k}) \\
    D_2 (\coefQ^{k}) \\
    D_3 (\coefQ^{k}) \\
  \end{array}
\right] \coefF +
\left[
  \begin{array}{c}
    D_1 (\coefW^{k}) \\
    D_2 (\coefW^{k}) \\
    D_3 (\coefW^{k}) \\
  \end{array}
\right] \coefG,
\end{equation}
where $D_1 (\cdot)$, $D_2 (\cdot)$, $D_3 (\cdot)$ are $n \times n$ diagonal matrices defined by
\begin{equation}\label{eqn:diag_exp_matrices}
\begin{split}
    D_r (\coefQ^{k}) &:= \mathrm{diag} [ \coefQ^{k}((r-1)n+1 : r n) ], ~ \text{ for } r = 1,2,3, \\
    D_r (\coefW^{k}) &:= \mathrm{diag} [ \coefW^{k}((r-1)n+1 : r n) ], ~ \text{ for } r = 1,2,3.
\end{split}
\end{equation}
Furthermore, we make a similar tangential expansion for $\coefV$:
\begin{equation}\label{eqn:coef_test_vector_tangent_decomp}
  \coefV =
\left[
  \begin{array}{c}
    D_1 (\coefQ^{k}) \\
    D_2 (\coefQ^{k}) \\
    D_3 (\coefQ^{k}) \\
  \end{array}
\right] \coefY +
\left[
  \begin{array}{c}
    D_1 (\coefW^{k}) \\
    D_2 (\coefW^{k}) \\
    D_3 (\coefW^{k}) \\
  \end{array}
\right] \coefZ,
\end{equation}
where $\coefY = (\coefY_1, ..., \coefY_n)\tp$, $\coefZ = (\coefZ_1, ..., \coefZ_n)\tp$ are arbitrary column vectors in $\R^{n}$.

Now note that $A$ is symmetric block diagonal with $d$ \emph{identical} blocks $\widetilde{A}$ (recall \eqref{eqn:discrete_sub_energy_delta_vN}).  And the orthogonality of $\vq_h^k$ and $\vw_h^k$ at the nodes is equivalent to
\begin{equation}\label{eqn:coef_vector_ortho}
  \coefQ^k_{i} \coefW^k_{i} + \coefQ^k_{n + i} \coefW^k_{n + i} + \coefQ^k_{2 n + i} \coefW^k_{2 n + i} = \sum^3_{r = 1} \coefQ^{k}_{(r-1)n + i} \coefW^{k}_{(r-1)n + j} = 0,
\end{equation}
for all $1 \leq i \leq n$.  From this, one can show that
\begin{equation}\label{eqn:zero_matrix_cross_term}
\begin{split}
\left[
  \begin{array}{ccc}
    D_1 (\coefQ^{k}) & D_2 (\coefQ^{k}) & D_3 (\coefQ^{k}) \\
  \end{array}
\right] A
\left[
  \begin{array}{c}
    D_1 (\coefW^{k}) \\
    D_2 (\coefW^{k}) \\
    D_3 (\coefW^{k}) \\
  \end{array}
\right] &= \\
\sum^3_{r = 1} D_r (\coefQ^{k}) \widetilde{A} D_r (\coefW^{k}) &= 0 \in \R^{n \times n}.
\end{split}
\end{equation}
Indeed, looking at the $(i,j)$ entry and using \eqref{eqn:diag_exp_matrices}, we have
\begin{equation*}
\begin{split}
  \sum^3_{r = 1} [ D_r (\coefQ^{k}) \widetilde{A} D_r (\coefW^{k}) ]_{ij} &= \sum^3_{r = 1} \coefQ^{k}_{(r-1)n + i} (\widetilde{A})_{ij} \coefW^{k}_{(r-1)n + j} \\
  &= (\widetilde{A})_{ij} \sum^3_{r = 1} \coefQ^{k}_{(r-1)n + i} \coefW^{k}_{(r-1)n + j} = 0,
\end{split}
\end{equation*}
by \eqref{eqn:coef_vector_ortho}.

Therefore, plugging the expansions \eqref{eqn:coef_soln_vector_tangent_decomp}, \eqref{eqn:coef_test_vector_tangent_decomp} into \eqref{eqn:discrete_variational_matrix_eqn}, accounting for \eqref{eqn:zero_matrix_cross_term}, and using the arbitrariness of $\coefY$, $\coefZ$, we obtain two decoupled, $n \times n$ linear systems to solve:
\begin{equation}\label{eqn:discrete_variational_matrix_eqn_tangent_valpha}
\begin{split}
A(\coefQ^k) \coefF = \coefC(\coefQ^k), \qquad A(\coefW^k) \coefG = \coefC(\coefW^k),
\end{split}
\end{equation}
which are defined by
\begin{equation*}
\begin{split}
A(\coefQ^k) &=
\left[
  \begin{array}{ccc}
    D_1 (\coefQ^{k}) & D_2 (\coefQ^{k}) & D_3 (\coefQ^{k}) \\
  \end{array}
\right] A
\left[
  \begin{array}{c}
    D_1 (\coefQ^{k}) \\
    D_2 (\coefQ^{k}) \\
    D_3 (\coefQ^{k}) \\
  \end{array}
\right], \\
A(\coefW^k) &=
\left[
  \begin{array}{ccc}
    D_1 (\coefW^{k}) & D_2 (\coefW^{k}) & D_3 (\coefW^{k}) \\
  \end{array}
\right] A
\left[
  \begin{array}{c}
    D_1 (\coefW^{k}) \\
    D_2 (\coefW^{k}) \\
    D_3 (\coefW^{k}) \\
  \end{array}
\right],
\end{split}
\end{equation*}
\begin{equation*}
\begin{split}
\coefC(\coefQ^k) &= \left[
  \begin{array}{ccc}
    D_1 (\coefQ^{k}) & D_2 (\coefQ^{k}) & D_3 (\coefQ^{k}) \\
  \end{array}
\right] \coefC, \\
\coefC(\coefW^k) &= \left[
  \begin{array}{ccc}
    D_1 (\coefW^{k}) & D_2 (\coefW^{k}) & D_3 (\coefW^{k}) \\
  \end{array}
\right] \coefC.
\end{split}
\end{equation*}
After solving for $\coefF$ and $\coefG$, we compute $\coefT^k$ via \eqref{eqn:vt_tangent_expansion} or \eqref{eqn:coef_soln_vector_tangent_decomp}.  This yields the finite element function $\vt_h^{k}$ in Step (a) of Section \ref{algorithm}.

%---------------------------------------------------------------------------
\subsection{Experimental order of convergence}\label{sec:eoc}
%--------------------------------------------------------------------------

We test the accuracy of our method against an exact solution found in \cite[Sec. 6.4]{Virga_book1994} that represents a plane defect (see Figure \ref{fig:Plane_Defect_3D_Director_all}).  The computational domain is a cube $\Om = [0, 1]^3$ and we set $\kappa=0.2$.  The double well potential is removed, so the energy is $E[s,\vn] \equiv E_1[s,\vn]$.

\begin{figure}[h!]
\begin{center}
\subfloat{

\includegraphics[width=2.5in]{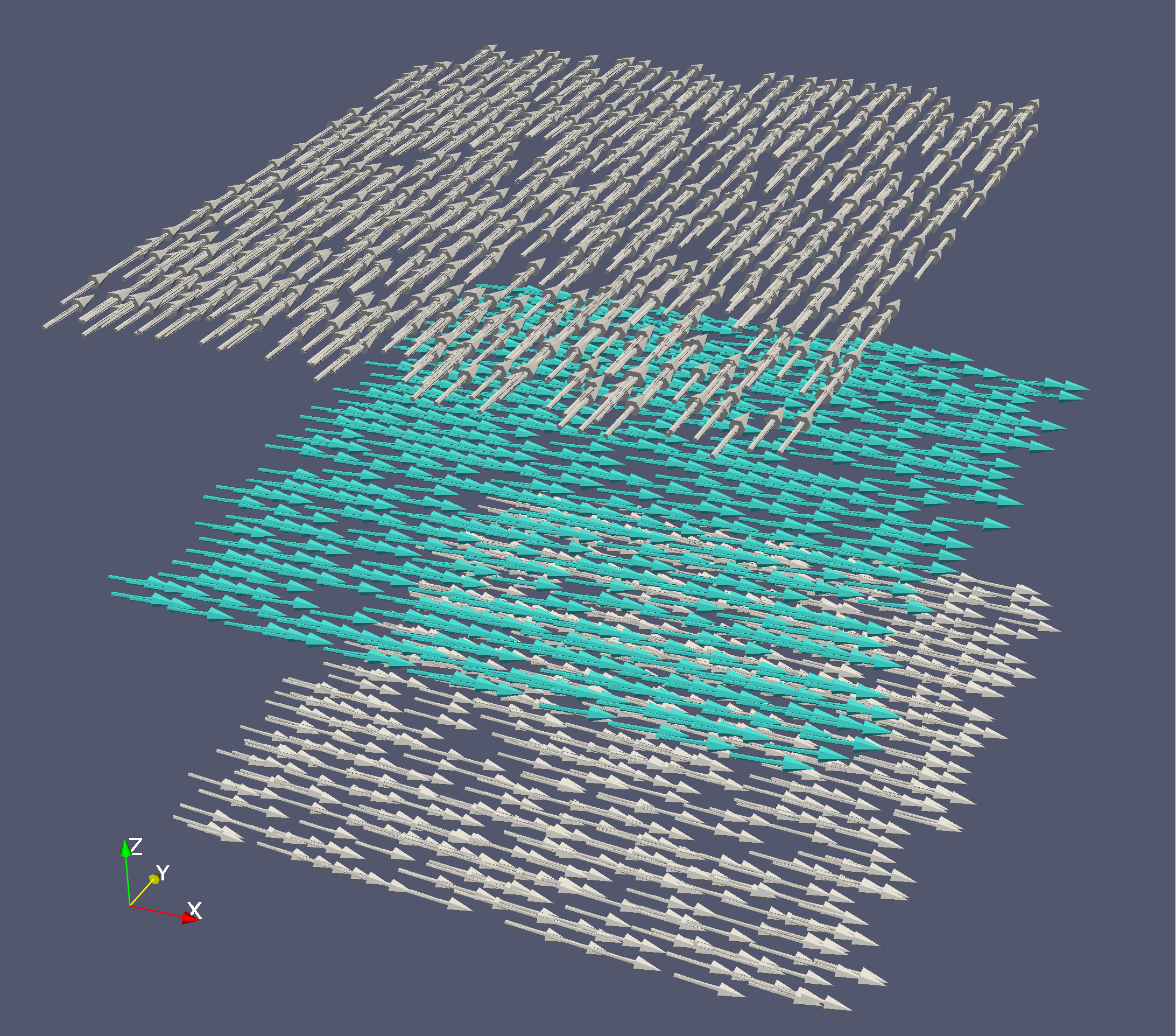}
}
\hspace{0.1in}
\subfloat{

\includegraphics[width=2.5in]{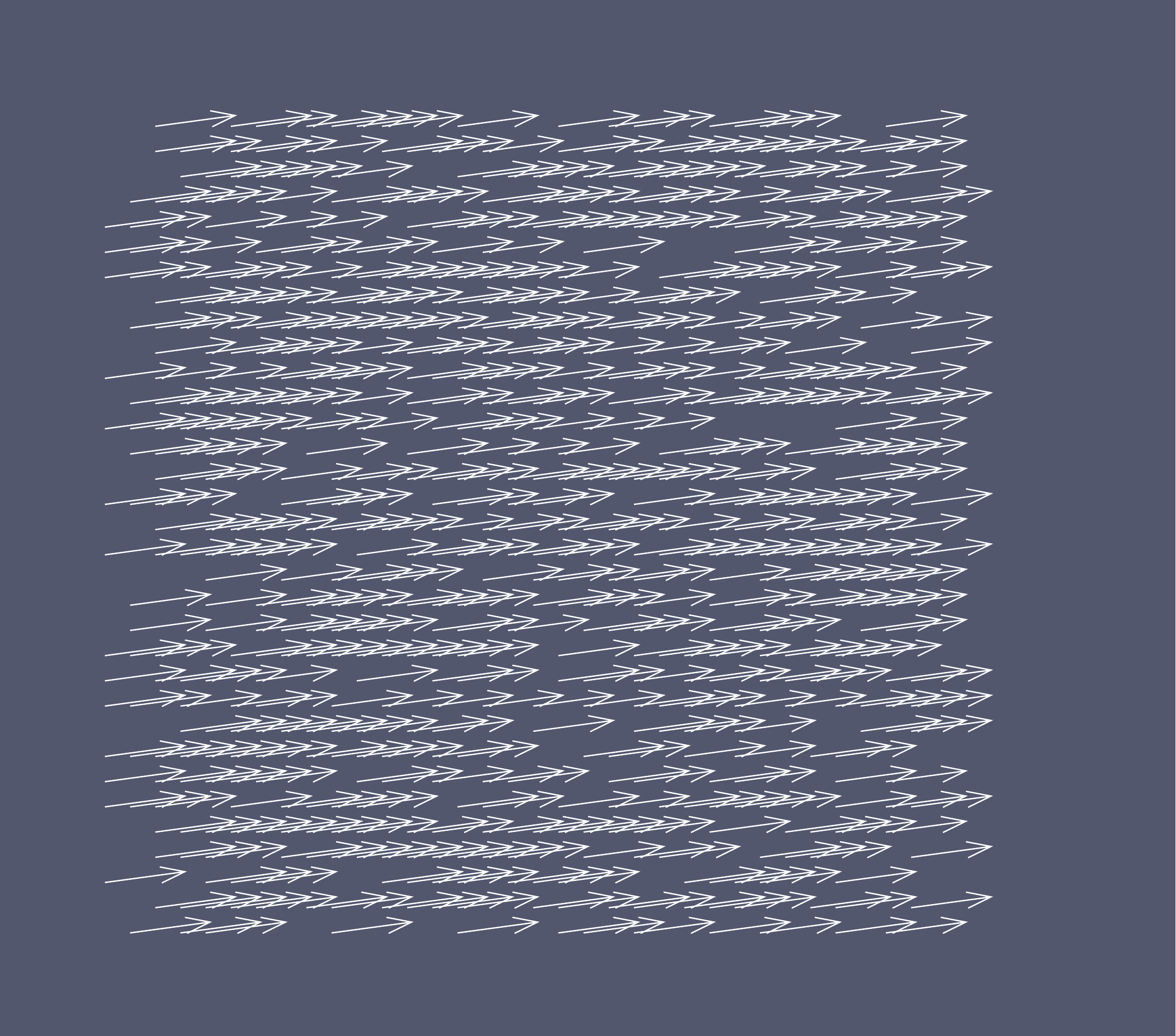}
}
\caption{Simulation results for Section \ref{sec:eoc}.  The director field is shown on the planes $z = 0.1, 0.5, 0.9$, which indicates that it is piecewise constant (see \eqref{eqn:bc_plane_defect_exact_soln}).  On the right, the $z=0.5$ plane case indicates that the director field takes on an intermediate value between $(1,0,0)\tp$ and $(0,1,0)\tp$.  The numerical solution was computed at mesh level $\ell=5$.}
\label{fig:Plane_Defect_3D_Director_all}
\end{center}
\end{figure}

The following Dirichlet boundary conditions on $\overline{\partial \Om} \cap ( \{ z = 0\} \cup \{ z = 1\} )$ are imposed for $(s,\vn)$:
\begin{equation}\label{eqn:bc_plane_defect}
\begin{split}
  z=0:& \quad s = s^*, \qquad \vn = (1,0,0), \\
  z=1:& \quad s = s^*, \qquad \vn = (0,1,0),
\end{split}
\end{equation}
and Neumann conditions are imposed on the remaining part of $\partial \Om$, i.e. $\vnu \cdot \nabla s = 0$ and $\vnu \cdot \nabla \vn = 0$.  The exact solution $(s,\vn)$ (at equilibrium) only depends on $z$ and is given by
\begin{equation}\label{eqn:bc_plane_defect_exact_soln}
\begin{split}
  \vn(z) &= (1,0,0), ~ \text{for } z < 0.5, \quad  \vn(z) = (0,1,0), ~ \text{for } z > 0.5, \\
  s(z) &= 0, ~\text{at } z = 0.5, \text{ and } s(z) \text{ is linear for } z \in (0, 0.5) \cup (0.5, 1.0).
\end{split}
\end{equation}
Figure \ref{fig:Plane_Defect_3D_Director_all} gives an illustration of $\vn$, and Figure \ref{fig:Plane_Defect_3D_Scalar} shows a one-dimensional slice of $s$.
\begin{figure}[h!]
\begin{center}

\includegraphics[width=4.5in]{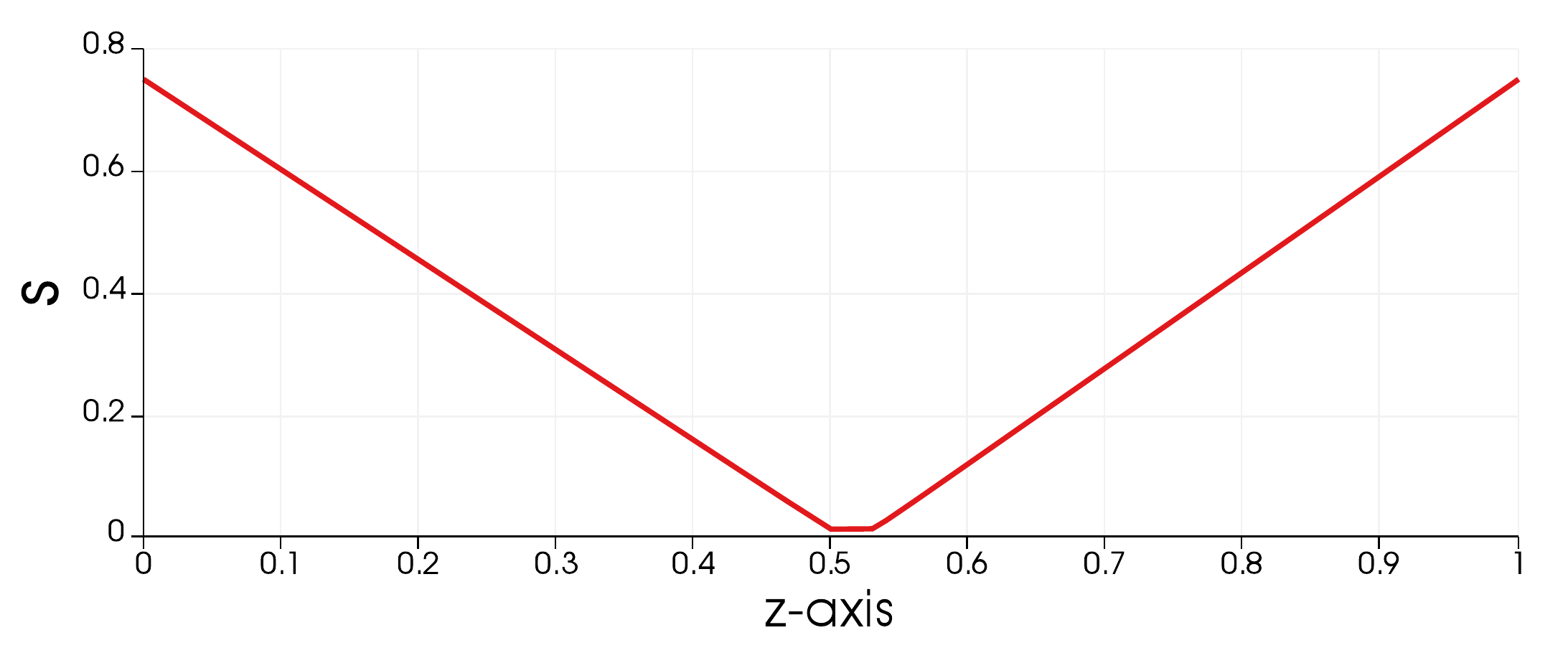}

\caption{Simulation results for Section \ref{sec:eoc}.  The scalar field $s$ is shown evaluated along a line parallel to the $z$-axis and passing through $x = 0.5$, $y = 0.5$.  It is close to the exact piecewise linear solution in \eqref{eqn:bc_plane_defect_exact_soln}. The numerical solution was computed at mesh level $\ell=5$.}
\label{fig:Plane_Defect_3D_Scalar}
\end{center}
\end{figure}

\begin{table}[h!]
\begin{center}
\caption{\label{tbl:plane_defect_3D_EOC} Numerical error vs. mesh refinement for plane defect in three dimensions.  The estimated order of convergence (EOC) is given in the last row.}
\begin{tabular}{|c|c|c|c|c|c|}
  \hline
  % after \\: \hline or \cline{col1-col2} \cline{col3-col4} ...
  Level $\ell$ & $\| s - s_h \|_{L^2(\Om)}$ & $\| s - s_h \|_{H^1(\Om)}$ & $\| \vn - \vn_h \|_{L^2(\Om)}$ & $\| \vu - \vu_h \|_{L^2(\Om)}$ & $\| \vu - \vu_h \|_{H^1(\Om)}$ \\ \hline
  3 & 5.5087E-02 & 5.5090E-01 & 2.6693E-01 & 5.7355E-02 & 4.6602E-01 \\
  4 & 2.9158E-02 & 3.9858E-01 & 2.0545E-01 & 2.9840E-02 & 3.2646E-01 \\
  5 & 1.4981E-02 & 2.7986E-01 & 1.4642E-01 & 1.5207E-02 & 2.2661E-01 \\
  6 & 7.5964E-03 & 1.9726E-01 & 1.0398E-01 & 7.6800E-03 & 1.5878E-01 \\ \hline \hline
  EOC & 0.9797 & 0.5046 & 0.4938 & 0.9855 & 0.5132 \\
  \hline
\end{tabular}
\end{center}
\end{table}
Numerical errors are given in Table \ref{tbl:plane_defect_3D_EOC}.
The meshes were created by partitioning $\Om$ into $(2^\ell)^3$
uniform cubes, where $\ell$ is the mesh level, and sub-dividing each
cube into six non-obtuse tetrahedra.  The estimated order of
convergence is given in the last row of the table.  The $L^2$-accuracy
appears to be first order for both $(s,\vu)$, the smoother
variables, and half order for $\vn$. Since $\vn$ is discontinuous
across the plane $\{x_3=0\}$ (plane defect), we cannot expect better accuracy in
$L^2$ with continuous elements; moreover, $s$ does not have better regularity
than $H^1$.
Furthermore, our discrete energy $E_1^h$ uses a first order approximation
\eqref{discrete_energy} of $\iO s^2 |\nabla \vn|^2$,
which is accounted for by the consistency
errors \eqref{eqn:residual}. These two facts are most likely responsible for the
reduced linear order for $(s,\vu)$.

%---------------------------------------------------------------------------
\section{Colloidal effects}\label{sec:colloids}
%---------------------------------------------------------------------------

Colloidal particles immersed in a liquid crystal can induce interesting equilibrium states with non-trivial defect configurations.  One example is the famous Saturn ring defect \cite{Gu_PRL2000, Alama_PRE2016}, which is a circular ring of defect surrounding a spherical hole inside the liquid crystal domain (see Figure \ref{fig:Diagram_Saturn_Ring_Defect}), i.e. $\Om$ is the region outside the sphere.
\begin{figure}%[h!]
\begin{center}

\includegraphics[width=2.5in]{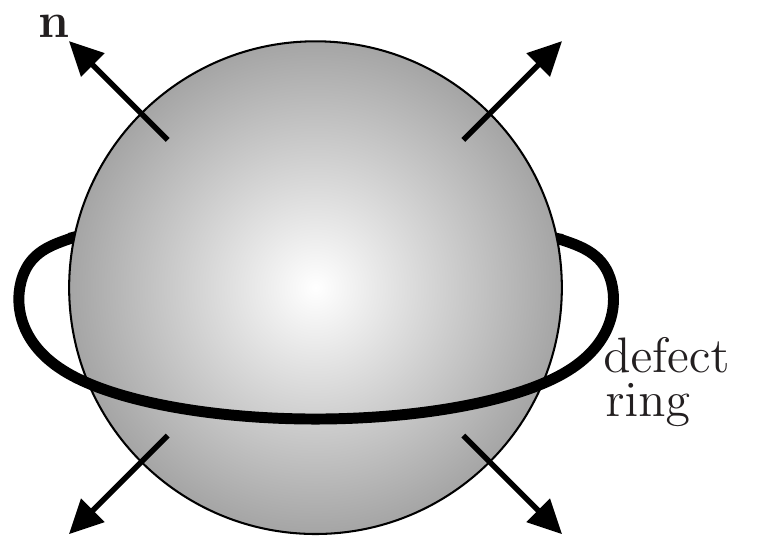}

\caption{Illustration of the Saturn ring defect.  A spherical colloidal particle is shown with normal anchoring conditions on its boundary (i.e. the director field $\vn$ is parallel to the normal vector $\vnu$ of the sphere).  The region where $s=0$ (i.e. the singular set $\Sing$) is marked by the thick curve and occurs depending on the outer boundary conditions (away from the sphere) imposed on $\vn$.}
\label{fig:Diagram_Saturn_Ring_Defect}
\end{center}
\end{figure}

In this section, we demonstrate that the Ericksen model and our
numerical method are able to capture interesting defect structures in the presence of colloids. The colloid particle is modeled as a spherical inclusion inside the liquid crystal domain.  Section \ref{sec:conforming_mesh} shows a direct simulation (with a conforming mesh) which gives rise to a Saturn ring-like defect structure (depending on outer boundary conditions).  In Section \ref{sec:immersed_boundary}, we combine our method for the Ericksen model with an immersed boundary approach and compare with our conforming mesh approach.  For both subsections, we use the following notation.  The liquid crystal domain is denoted by $\Om$, with boundary $\dOm$ that decomposes into a disjoint union $\dOm = \Gm_{i} \cup \Gm_{o}$, where $\Gm_{i}$ is the boundary of the interior ``hole'' and $\Gm_{o}$ is the outer boundary of the cylindrical domain that contains the hole (see Figures \ref{fig:LC_Sphere_Inclusion_BCs_A} and \ref{fig:LC_Sphere_Inclusion_BCs_B}).

%---------------------------------------------------------------------------
\subsection{Conforming non-obtuse mesh}\label{sec:conforming_mesh}
%--------------------------------------------------------------------------

%---------------------------------------------------------------------------
\subsubsection{Meshing the domain}%\label{sec:}
%--------------------------------------------------------------------------

It is quite difficult to generate a conforming, \textbf{non-obtuse}, tetrahedral mesh of a general domain; in fact, it is still an open question whether it is always possible to generate a non-obtuse tetrahedral mesh of a general three dimensional domain.  For our purposes, we managed to create a non-obtuse mesh of a cylindrical domain with a hole cut out, but the procedure is not general.  However, the resulting mesh is valid for testing our method.

We start by describing the domain $\Om$, which is essentially a cylinder with square cross-section with a spherical hole removed from the interior.  First, we create a tetrahedral mesh of a rectangular solid with the following dimensions: $[-0.5, 1.5] \times [-0.5, 1.5] \times [-2.5, 3.5]$.  We partition the solid into $8\times 8 \times 24$ uniformly sized cubes, of side length $h=0.25$, and further partition each cube into six tetrahedra.

Next, we shear the mesh by mapping the vertices with the following linear map:
\begin{equation}\label{eqn:ideal_tet_map}
\vx =
\left[
  \begin{array}{c}
    x \\
    y \\
    z \\
  \end{array}
\right]
=
\left[
  \begin{array}{ccc}
    1/\sqrt{2} & 0 & 0 \\
    0 & 1/\sqrt{2} & 0 \\
    -1/2 & -1/2 & 1 \\
  \end{array}
\right] \hat{\vx},
\end{equation}
where $\hat{\vx}$ are coordinates in the initial rectangular solid.  This results in a mesh of so-called \emph{ideal} tetrahedra, whose circumcenters coincide with their barycenters \cite{VanderZee_SJSC2010, Walton_2015} (so-called ``well-centered'' tetrahedra).  The resulting ``prism'' is a cylinder with square cross-section given by $[-0.25 \sqrt{2}, 0.75 \sqrt{2}]^2$ and is centered about the $z=0$ plane (see Figure \ref{fig:non-obtuse_mesh}).

\begin{figure}[h!]
\begin{center}

\includegraphics[width=1.55in]{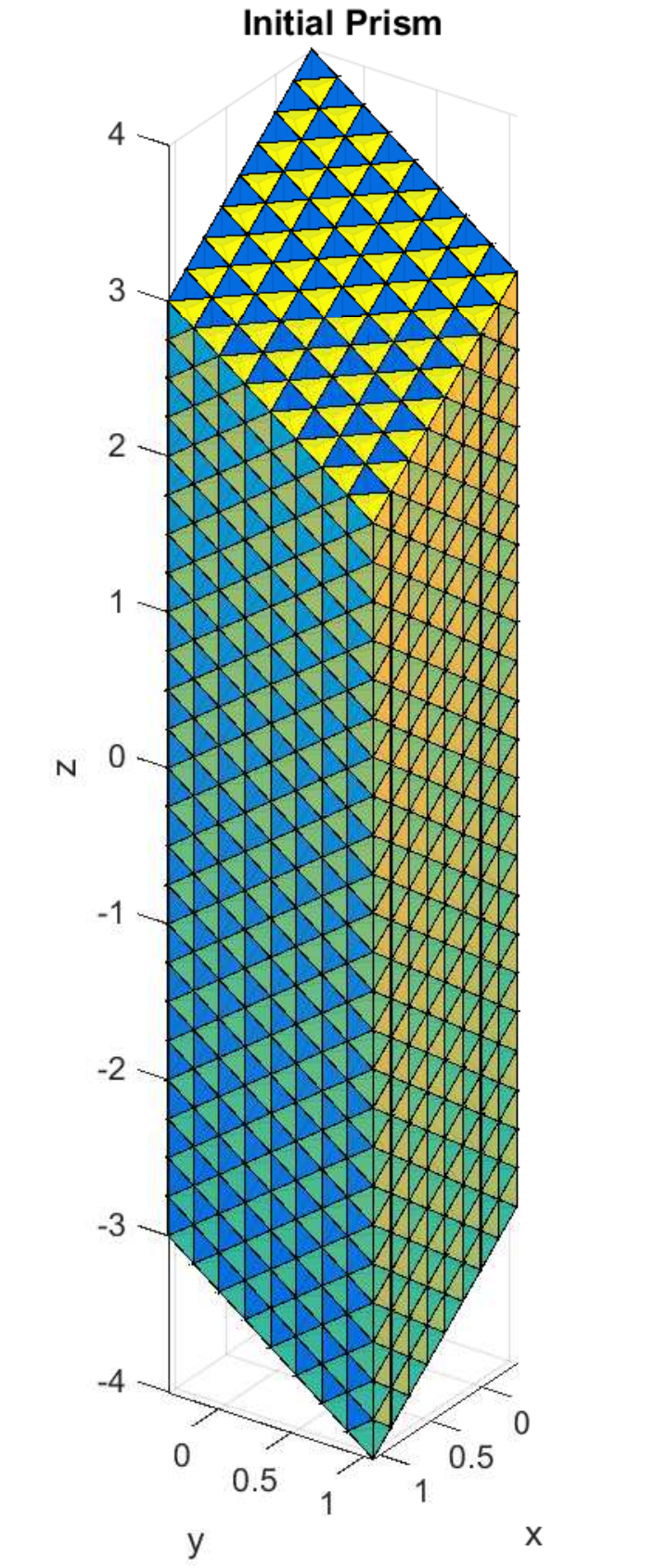}
\hspace{0.5in}
\includegraphics[width=1.3in]{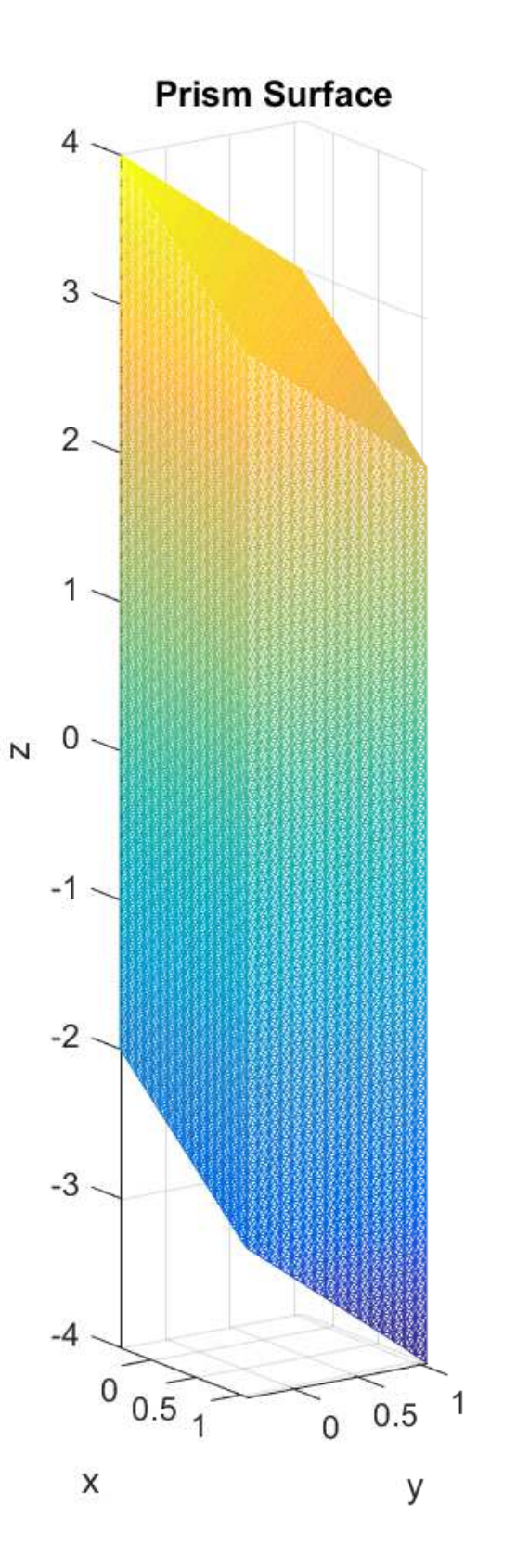}

\caption{Non-obtuse mesh of cylindrical domain with interior spherical hole.
Left figure depicts the initial mesh obtained by first
  partitioning a right prism into $8\times8\times24$ cubes and dividing
  them into six tetrahedra, and next mapping them via
  \eqref{eqn:ideal_tet_map}. Right picture (rotated 90 degrees) displays
  the final mesh, with $1,764,864$ tetrahedra, after applying a 1-to-8 refinement
  and a 1-to-24 ``yellow'' refinement to the previous mesh.}
\label{fig:non-obtuse_mesh}
\end{center}
\end{figure}

We then remove all tetrahedra (from the prism mesh) whose circumcenters are inside a sphere of radius $R = 0.283 / \sqrt{2}$ centered at $(0.5/\sqrt{2},0.5/\sqrt{2},0)\tp$.  The internal cavity represents our spherical colloid.  We make a small adjustment of the vertex positions on the boundary of the cavity so that they lie exactly on the given sphere boundary.  Thus, the tetrahedra are slightly off from being exactly well-centered.

In order to have a more accurate simulation, we apply two well chosen refinements in the following way.  We use a standard 1-to-8 uniform refinement of the mesh \cite{Joe_MC1996}, while choosing the best diagonal to maintain the well-centered property.  The (new) vertices on the boundary of the internal cavity are \emph{adjusted} so that the mesh conforms to the sphere.  The resulting mesh is not as well-centered, but all tetrahedra still strictly contain their circumcenters.  Therefore, we use the ``yellow'' refinement described in \cite[pg. 1108-1109]{Korotov_CMA2005}, which partitions each tetrahedron into 24 tetrahedra where each new tetrahedron has the circumcenter as a vertex.  This final refinement is guaranteed to yield a non-obtuse mesh; see Figure \ref{fig:non-obtuse_mesh} for a view of the surface mesh of the prism.

\begin{figure}[h]
\begin{center}

\includegraphics[width=2.6in]{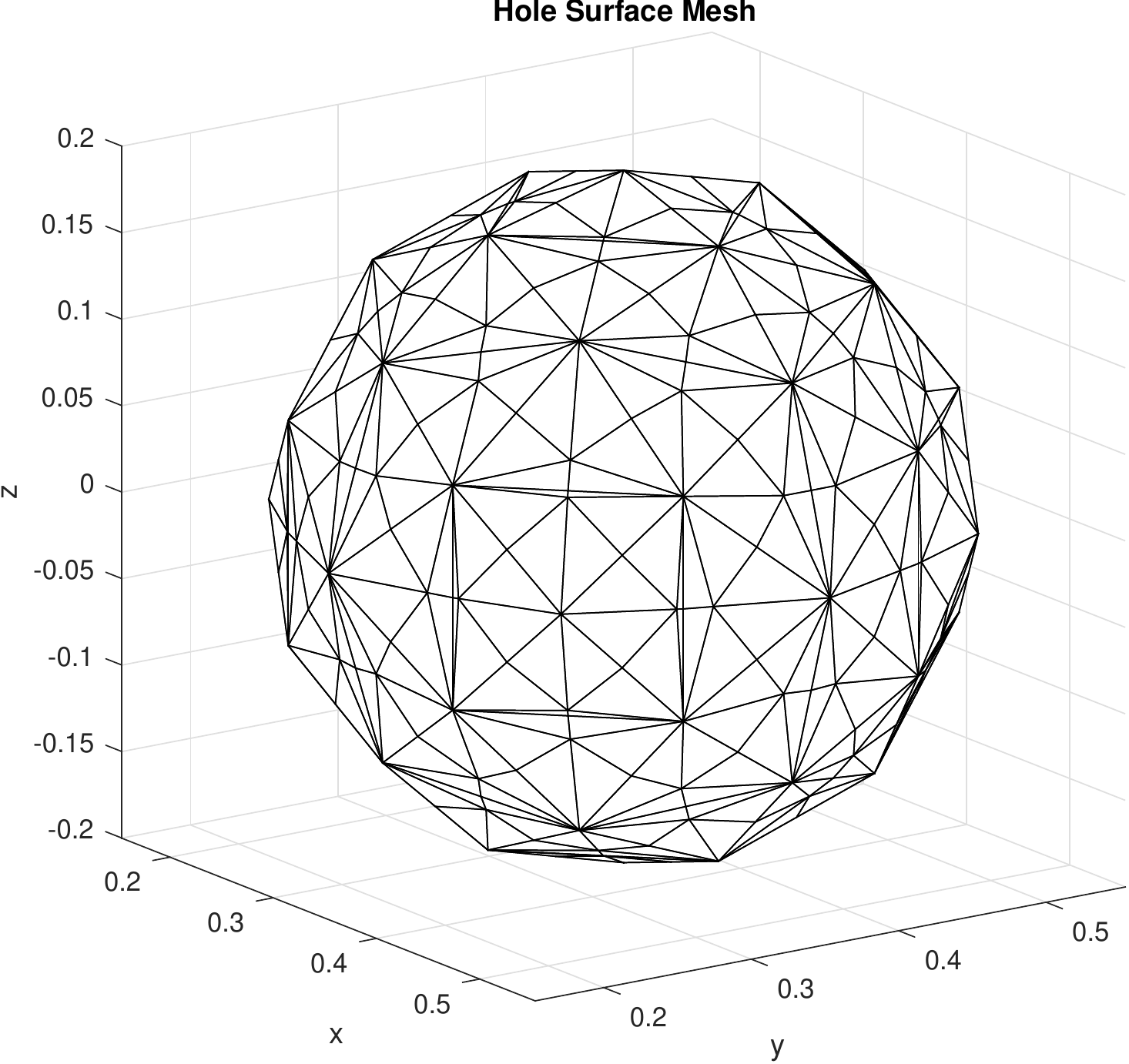}

\caption{Surface mesh of the interior hole. Not all vertices lie on
the spherical cavity, so as to ensure that all tetrahedra are non-obtuse, thereby
giving a slightly faceted surface mesh. This is compatible with the
Saturn ring defect not being very sensitive to fine details of the hole geometry.}
\label{fig:hole_surface_mesh}
\end{center}
\end{figure}
Note that we do not adjust the new vertices (generated by the second refinement) to lie on the spherical hole's boundary.  Any adjustment seems to yield an obtuse mesh, because the domain with hole is not convex.  Thus, the surface mesh of the internal spherical hole boundary is slightly faceted (see Figure \ref{fig:hole_surface_mesh}).  This is allowable because the defect structures of interest are not very sensitive to the fine details of the geometry of the hole.

The final mesh has $1,764,864$ tetrahedra and $329,698$ vertices. The dihedral angles are between $4.156^\circ$ and $90^\circ$ (non-obtuse); the surface mesh angles are between $2.726^\circ$ and $90^\circ$.  The minimum angles are not great, but acceptable for numerical simulation.

%---------------------------------------------------------------------------
\subsubsection{Simulating a disperse/point defect}\label{sec:conform_disperse-point_defect}
%--------------------------------------------------------------------------

\begin{figure}[ht]
\begin{center}

\includegraphics[width=1.7in]{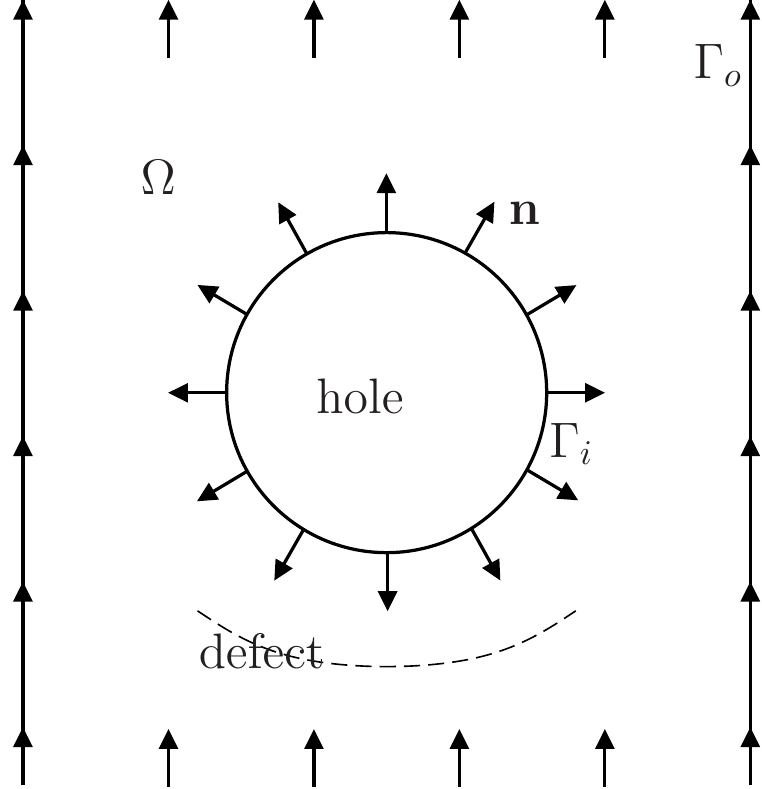}

\caption{Boundary conditions for a disperse/point defect (Section \ref{sec:conform_disperse-point_defect}).  One can see why a defect arises because of the incompatibility of the director fields on the bottom of the sphere and the bottom of the prism.}
\label{fig:LC_Sphere_Inclusion_BCs_A}
\end{center}
\end{figure}

Consider the boundary conditions shown in Figure \ref{fig:LC_Sphere_Inclusion_BCs_A}, which is the director field version of the Landau-deGennes model considered in \cite{Alama_PRE2016}.  The precise strong anchoring condition is given by
\begin{equation}\label{eqn:disperse_BCs}
  \vn = \vnu, ~ \text{on } \Gm_{i}, \quad
  \vn = (0,0,1)\tp, ~ \text{on } \Gm_{o}, \quad
  s = s^*, ~ \text{on } \dOm,
\end{equation}
where $\vnu$ is the outer normal vector of the spherical inclusion,
and $s^*$ is the global minimum of the double well potential $\psi$.
Moreover, the double well potential has the convex splitting
$\psi (s) = (0.3)^{-2} \left( \psi_c (s) - \psi_e (s) \right)$
for $-\frac{1}{2} < s < 1$, where
\begin{equation}\label{eqn:double_well_defn}
\begin{split}
  \psi_c (s) := 63.0 s^2
  \qquad
  \psi_e (s) := -16.0 s^4 + 21.33333333333 s^3 + 57.0 s^2,
\end{split}
\end{equation}
with a local minimum at $s=0$ and global minimum at $s=s^* := 0.750025$.  The initial conditions in $\Om$ for the gradient flow are: $s = s^*$ and $\vn = (0,0,1)\tp$.

The equilibrium solution, for $\kappa = 0.1$, is shown in Figure \ref{fig:Prism_Sphere_Hole_kappa_0_1_all}.  The low value of $\kappa$ leads to a large disperse defect region, which is induced by the ``frustrated'' boundary conditions between the bottom of the sphere and the bottom of the cylinder (see Figure \ref{fig:LC_Sphere_Inclusion_BCs_A}).
\begin{figure}%[h!]
\begin{center}
\subfloat{

\includegraphics[width=2.5in]{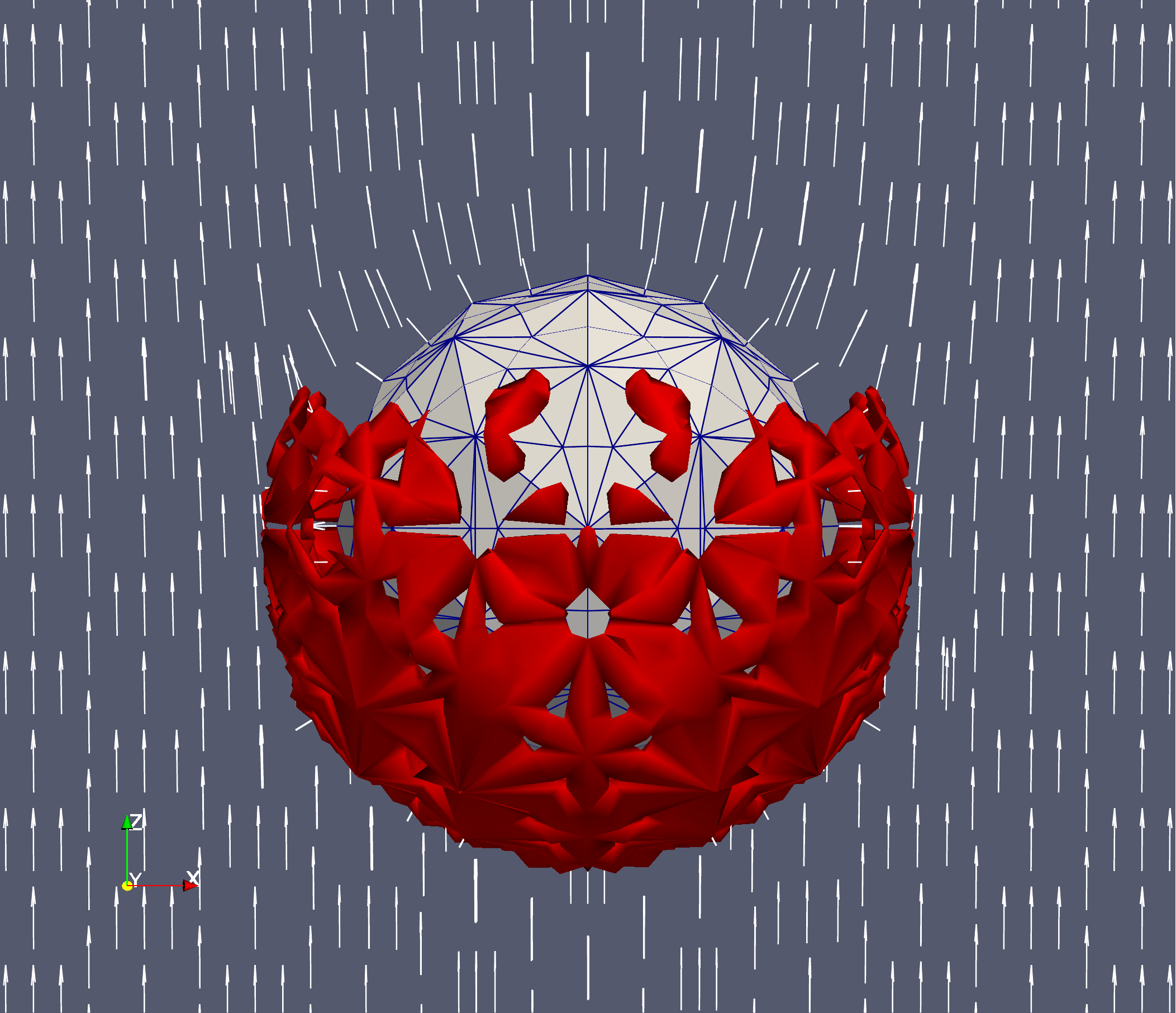}
%\caption{test.}
%\label{fig:Prism_Sphere_Hole_kappa_0_1_view_B}
}
\hspace{0.1in}
\subfloat{

\includegraphics[width=2.5in]{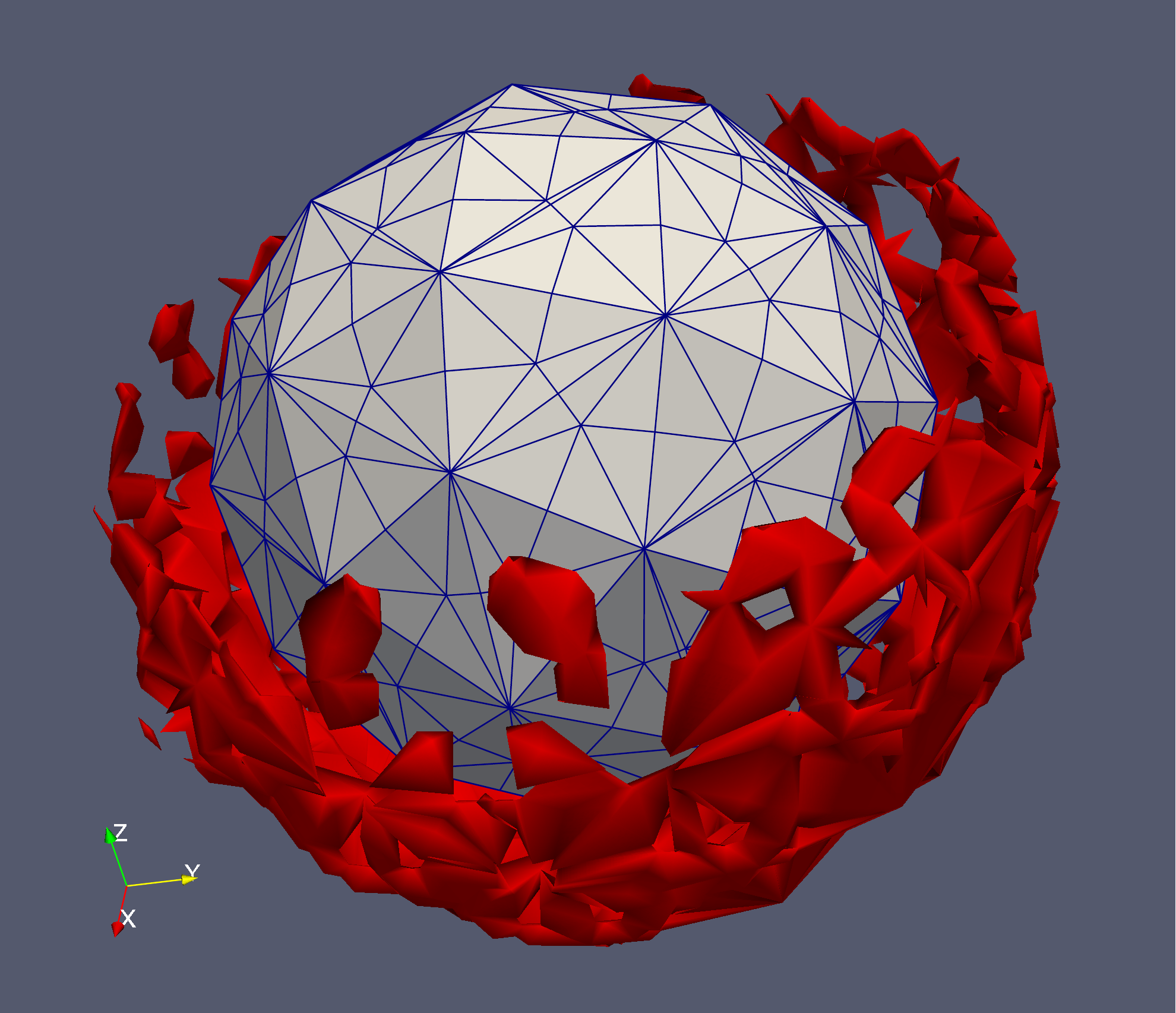}
%\caption{test.}
%\label{fig:Prism_Sphere_Hole_kappa_0_1_view_A}
}
\caption{Simulation results for Figure \ref{fig:LC_Sphere_Inclusion_BCs_A} ($\kappa = 0.1$).  The surface mesh of the internal hole is shown and the $s=0.04$ iso-surface is plotted in red which indicates the defect region; the director field is depicted with white arrows.  The disperse defect region has a bowl-like shape underneath the internal hole.}
\label{fig:Prism_Sphere_Hole_kappa_0_1_all}
\end{center}
\end{figure}

A different equilibrium solution is obtained with $\kappa = 1.0$, which is shown in Figure \ref{fig:Prism_Sphere_Hole_kappa_1_all}.  The larger value of $\kappa$ leads to a smaller defect region compared to Figure \ref{fig:Prism_Sphere_Hole_kappa_0_1_all}.  The center of the hole is $\approx (0.354, 0.354, 0)\tp$ and the location of the defect region is $\approx (0.354, 0.354, -0.275)\tp$.
\begin{figure}%[h!]
\begin{center}
\subfloat{

\includegraphics[width=2.5in]{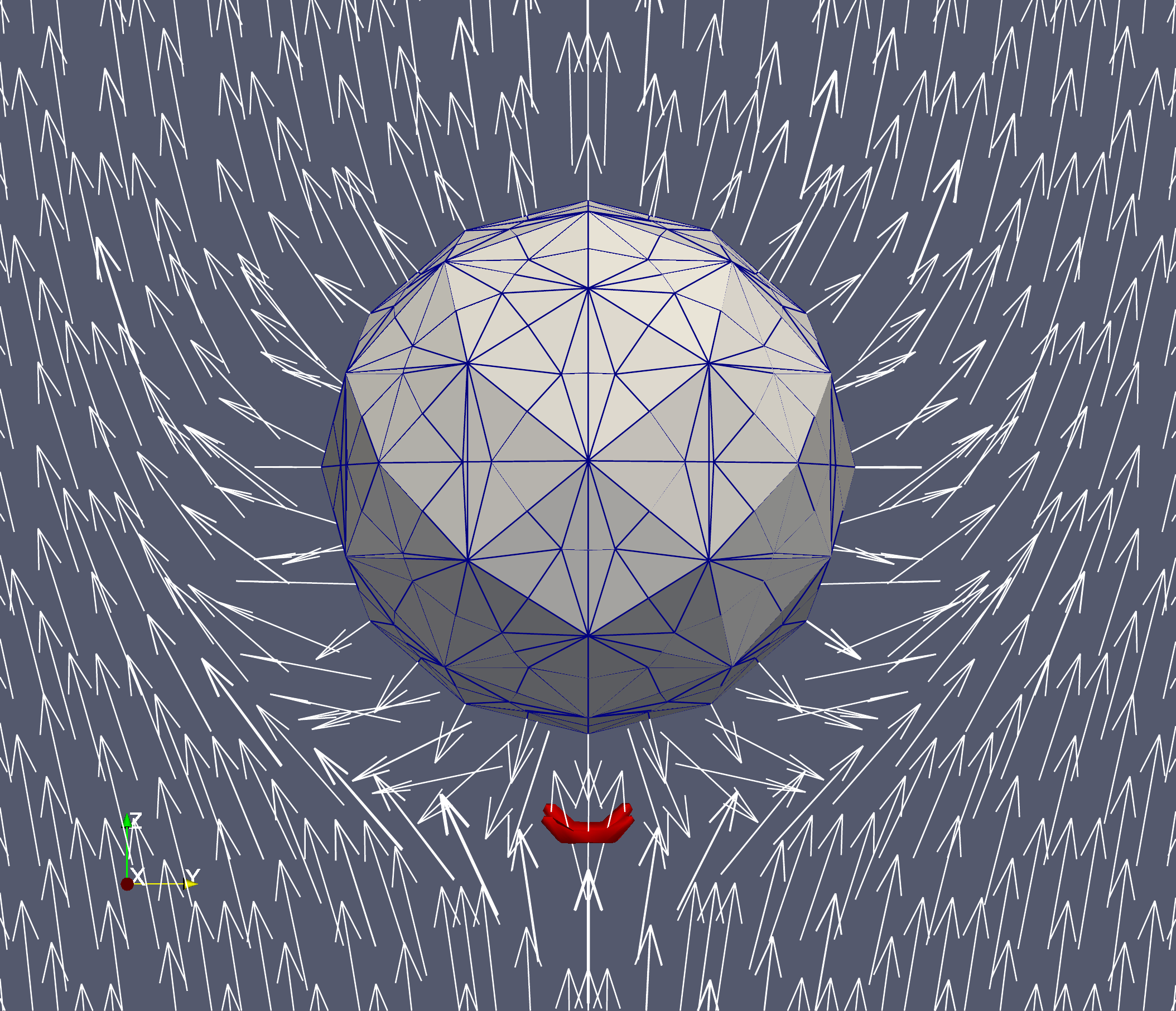}
%\caption{test.}
%\label{fig:Prism_Sphere_Hole_kappa_0_1_view_A}
}
\hspace{0.1in}
\subfloat{

\includegraphics[width=2.5in]{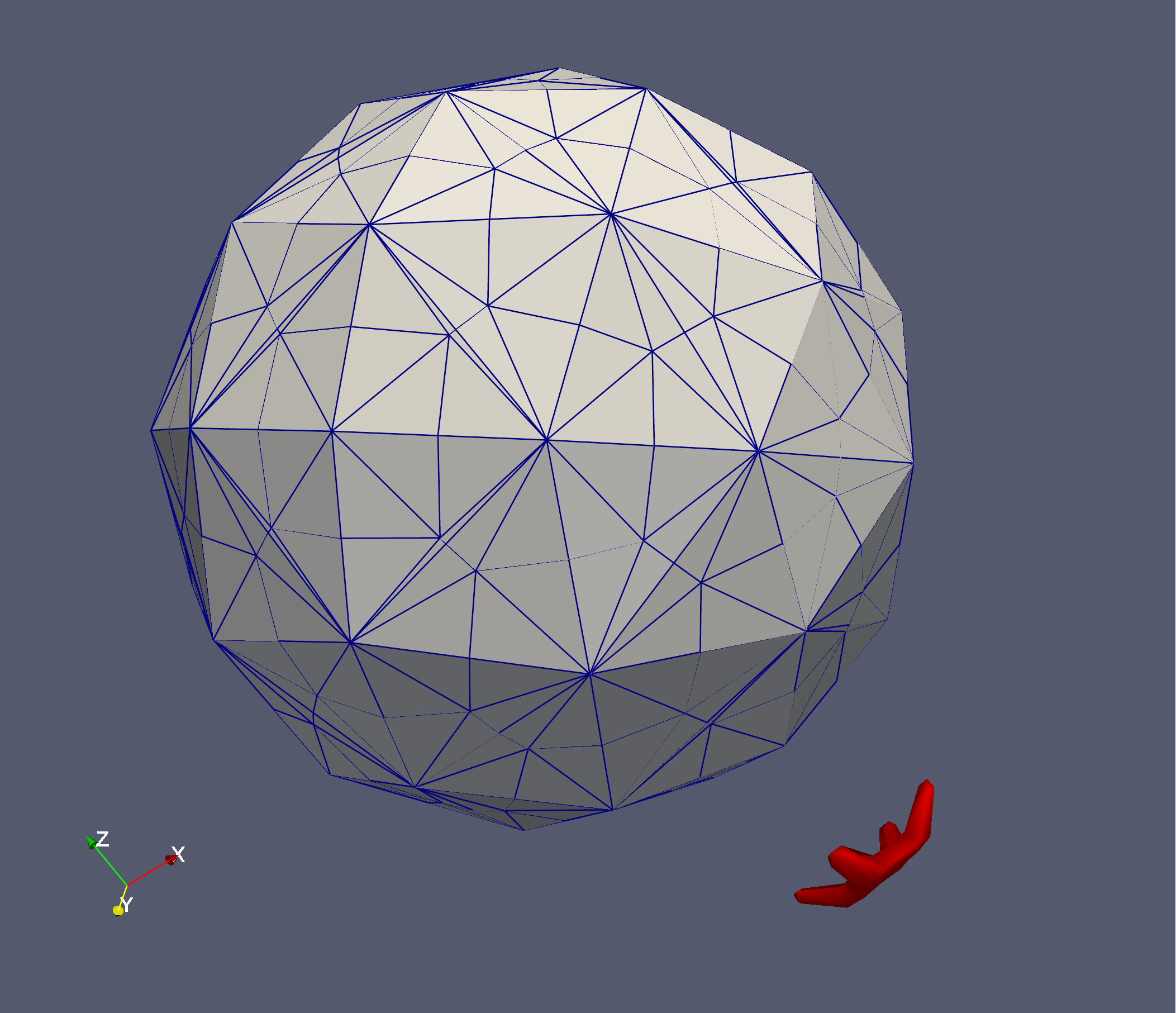}
%\caption{test.}
%\label{fig:Prism_Sphere_Hole_kappa_0_1_view_B}
}
\caption{Simulation results for Figure \ref{fig:LC_Sphere_Inclusion_BCs_A} ($\kappa = 1.0$).  The surface mesh of the internal hole is shown and the $s=0.1$ iso-surface is plotted in red which indicates the defect region; the director field is depicted with white arrows.  The defect region is more localized to a ``point'' below the internal hole (c.f. Figure \ref{fig:Prism_Sphere_Hole_kappa_0_1_all}).}
\label{fig:Prism_Sphere_Hole_kappa_1_all}
\end{center}
\end{figure}

%---------------------------------------------------------------------------
\subsubsection{Simulating a Saturn ring-like defect}\label{sec:conform_ring_defect}
%--------------------------------------------------------------------------

Consider the boundary conditions shown in Figure \ref{fig:LC_Sphere_Inclusion_BCs_B}, which is another director field version of the Landau-deGennes model considered in \cite{Alama_PRE2016}.  The strong anchoring condition is given by
\begin{equation}\label{eqn:saturn_ring_BCs}
  \vn = \vnu, ~ \text{on } \Gm_{i}, \quad
  %\vn = (0,0,1)\tp, ~ \text{on } \Gm_{o}, \quad
  s = s^*, ~ \text{on } \dOm,
\end{equation}
where $\vn$ smoothly interpolates between $(0,0,-1)\tp$ and $(0,0,1)\tp$ on $\Gm_{o}$.  The same double well potential is used as in \eqref{eqn:double_well_defn}.  The initial conditions in $\Om$ for the gradient flow are: $s = s^*$ and
\begin{equation*}
\begin{split}
  \vn(x,y,z) &= (0,0,-1)\tp, ~ \text{ if } z < 0, \\
  \vn(x,y,z) &= (0,0,+1)\tp, ~ \text{ if } z \geq 0.
\end{split}
\end{equation*}
\begin{figure}%[h!]
\begin{center}

\includegraphics[width=1.7in]{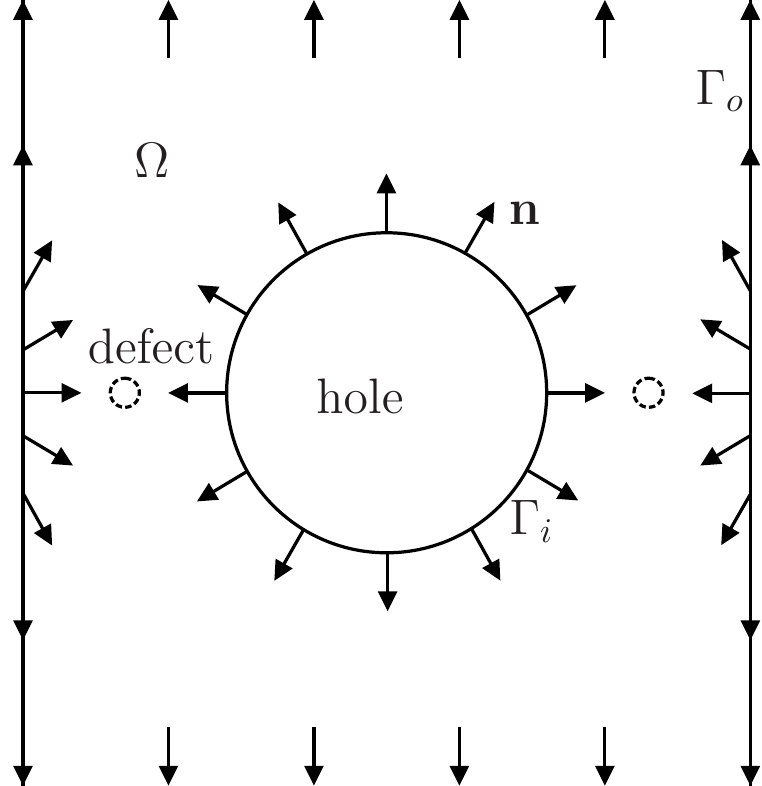}

\caption{Boundary conditions for Section \ref{sec:conform_ring_defect}.  A ring-like defect arises in this case because the director field on the sides of the cylinder are incompatible with the director field on the sphere.}
\label{fig:LC_Sphere_Inclusion_BCs_B}
\end{center}
\end{figure}

The equilibrium solution, for $\kappa = 1.0$, is shown in Figure
\ref{fig:Prism_Sphere_Hole_BC2_kappa_1_all}.  The choice of boundary
conditions in Figure \ref{fig:LC_Sphere_Inclusion_BCs_B} essentially
induces the Saturn ring defect.  The hole's radius is $\approx
0.200111$ and the radius of the Saturn ring is $\approx 0.314$.  Note
that the structure of the director field is not the same as would be
obtained with the Landau-deGennes model \cite{Alama_PRE2016}.  For
instance, the line field in the Saturn ring defect structure of
\cite{Alama_PRE2016} displays a $1/2$ degree point defect,
  whereas in our model the point defect of the director field is of
  degree 1 (see Figure \ref{fig:LC_Sphere_Inclusion_BCs_B}).
This is a limitation of the Ericksen's model, which is sensitive to
the orientation of the director field $\vn$.

\begin{figure}%[t]
\begin{center}
\subfloat{

\includegraphics[width=2.5in]{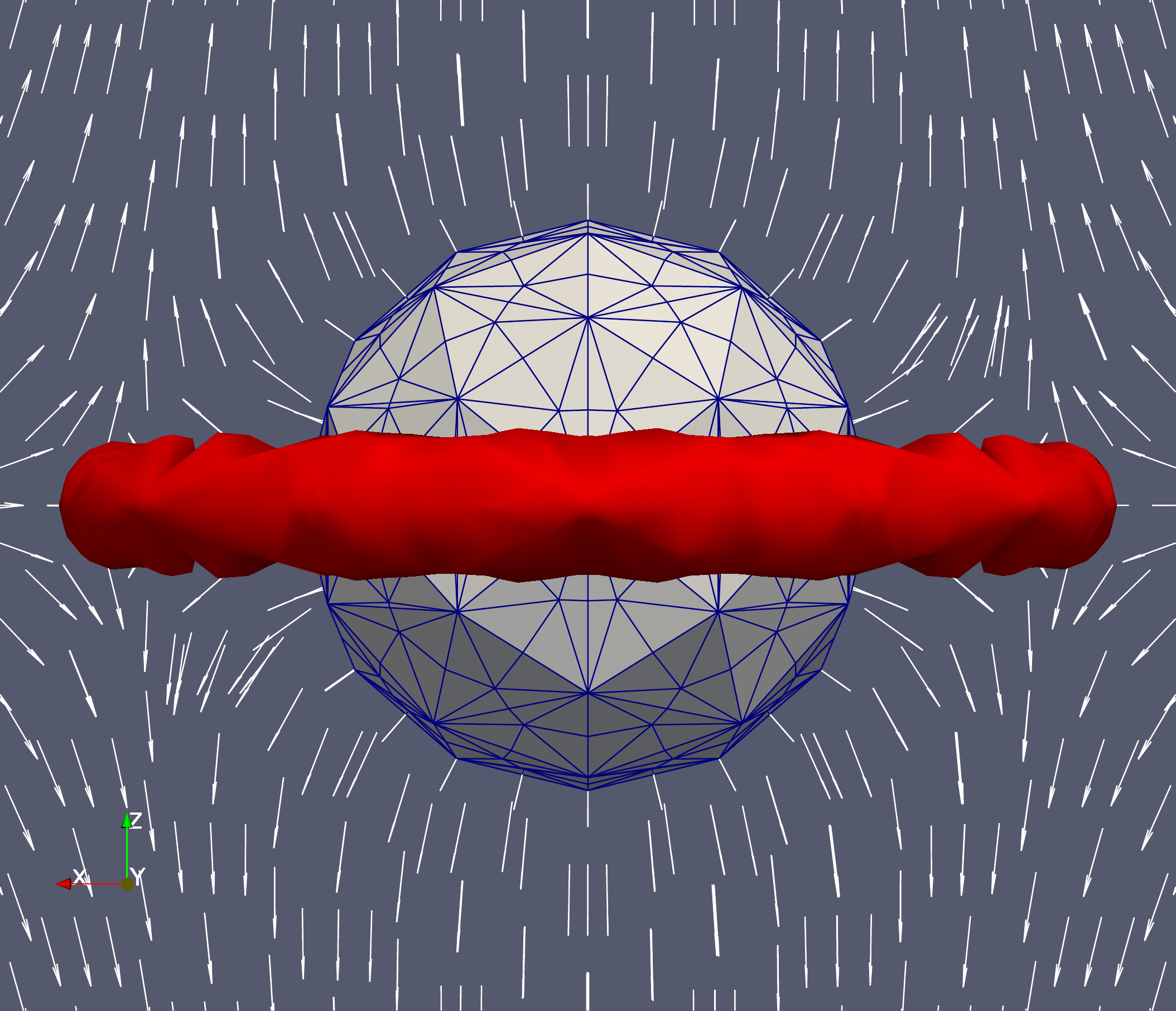}
%\caption{test.}
%\label{fig:Prism_Sphere_Hole_kappa_0_1_view_A}
}
\hspace{0.1in}
\subfloat{

\includegraphics[width=2.5in]{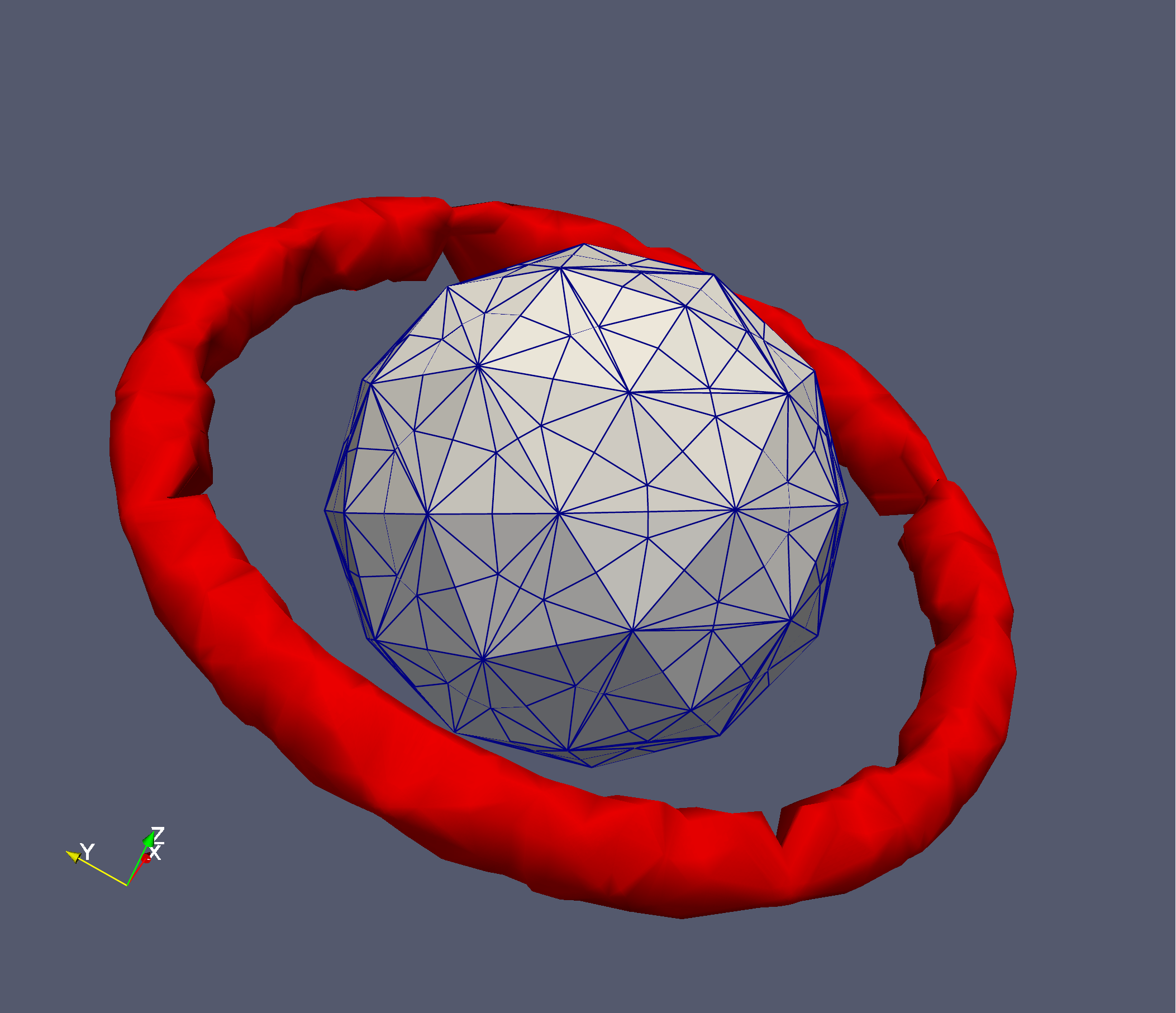}
%\caption{test.}
%\label{fig:Prism_Sphere_Hole_kappa_0_1_view_B}
}
\caption{Simulation results for Figure \ref{fig:LC_Sphere_Inclusion_BCs_B} ($\kappa = 1.0$).  The surface mesh of the internal hole is shown and the $s=0.12$ iso-surface is plotted in red which indicates the defect region; the director field is depicted with white arrows.  The defect region mimics the classic Saturn ring defect.}
\label{fig:Prism_Sphere_Hole_BC2_kappa_1_all}
\end{center}
\end{figure}

%---------------------------------------------------------------------------
\subsection{Immersed boundary method}\label{sec:immersed_boundary}
%--------------------------------------------------------------------------

The weakly acute (or non-obtuse) condition on the mesh is extremely difficult to satisfy in practice in three dimensions.  Therefore, we propose an immersed boundary approach to deal with general colloid shapes.  We define a fixed ``phase-field'' function to represent the colloidal region inside the liquid crystal domain, and add a special ``boundary'' energy term $\Ea$ to enforce boundary conditions on the colloid's boundary.  Specifically, we generalize the continuous \eqref{energy0} and discrete \eqref{discrete_energy} total energies to
\begin{equation}\label{energy_with_colloid}
  E[s, \vn] := E_1 [s, \vn] + E_2 [s] + \Ea[s, \vn],
\end{equation}
\begin{equation}\label{discrete_energy_with_colloid}
  E^h[s_h, \vn_h] := E_1^h [s_h, \vn_h] + E_2^h [s_h] + \Ea^h[s_h, \vn_h],
\end{equation}
where $\Ea$, $\Ea^h$ can take two different forms described in Sections \ref{sec:weak-anchoring} and \ref{sec:penalize_dirichlet}.

%---------------------------------------------------------------------------
\subsubsection{Representing a colloid}%\label{sec:}
%--------------------------------------------------------------------------

Let $\Om \subset \R^3$ be the ``hold-all'' domain that contains the liquid crystal material \emph{and} colloids.  Moreover, let $\Ochat \subset \R^3$ be the reference domain for a rigid solid (i.e. colloid), and let $\Oc$ be obtained from $\Ochat$ by a rigid motion.  We use $\Oc$ to represent the true colloid domain, with $\Ochat$ as a reference shape.  We assume throughout that $\Oc \subset \subset \Om$.  Thus, the region of interest for the liquid crystals is given by $\Om \setminus \overline{\Oc}$.

Let $\cdisthat : \R^3 \rightarrow \R$ be the signed distance
function to $\partial \Ochat$, i.e.
\begin{equation}\label{eqn:ref_colloid_dist_func}
  \cdisthat(\hat{\vx}) = \dist(\partial \Ochat, \hat{\vx}), \quad \forall \hat{\vx} \in \R^3,
\end{equation}
which is positive inside of $\Ochat$; thus $\partial \Ochat$ is the
zero level set of $\cdisthat$.
Next, define an affine map $\collmap : \R^3 \rightarrow \R^3$ such that $\Ochat = \collmap(\Oc)$ by
\begin{equation}\label{eqn:colloid_map_function}
  \hat{\vx} = \collmap(\vx) = \collrot \vx + \colltrans,
\end{equation}
where $\collrot$ is a constant rotation matrix, and $\colltrans$ is a translation vector.  Thus, the distance function for $\Oc$ is given by:
\begin{equation}\label{eqn:true_colloid_dist_func}
  \cdist(\vx) = \cdisthat(\collmap(\vx)) = \cdisthat(\hat{\vx}),
\end{equation}
with derivative formula:
\begin{equation}\label{eqn:true_colloid_dist_func_derivative}
\begin{split}
  \nabla \cdist(\vx) &= \hat{\nabla} \cdisthat(\collmap(\vx)) \nabla \collmap(\vx) = \hat{\nabla} \cdisthat(\collmap(\vx)) \collrot.
\end{split}
\end{equation}

%---------------------------------------------------------------------------
\subsubsection{Phase-field}%\label{sec:}
%--------------------------------------------------------------------------

Define a one dimensional phase-field function:
\begin{equation}\label{eqn:1D_phase_field_ref}
  \phaseref(t) = \frac{1}{2} \left[\frac{2}{\pi} \arctan \left(-\frac{t}{\epsilon}\right) + 1\right],
\end{equation}
where $\phaseref : (-\infty, \infty) \rightarrow (0, 1)$.  The parameter $\epsilon > 0$ is the thickness of the transition.  The derivative is given by:
\begin{equation}\label{eqn:1D_phase_field_ref_deriv}
  \phaseref'(t) = - \frac{1}{\pi \epsilon} \frac{1}{1 + \left( \frac{t}{\epsilon} \right)^2}.% \qquad \phaseref''(t) = \frac{1}{\pi \epsilon} \left( \frac{2 t}{\epsilon^2} \right) \left(1 + \left( \frac{t}{\epsilon} \right)^2\right)^{-2}.
\end{equation}

The phase-field function associated with the colloidal sub-domain $\Oc$ is
\begin{equation}\label{eqn:phase_field_of_colloid}
  \phase(\vx) = \phaseref(\cdist(\vx)) = \phaseref(\cdisthat(\collmap(\vx))).
\end{equation}
Thus, $\phase$ is essentially $0$ inside the colloidal
  inclusion and $1$ outside; so $\phase \approx 1$ ``marks'' where the liquid
  crystal domain is. The gradient is given by
\begin{equation}\label{eqn:phase_field_of_colloid_gradient}
\begin{split}
  \nabla \phase(\vx) &= \phaseref'(\cdist(\vx)) \nabla \cdist(\vx) = \phaseref'(\cdisthat(\collmap(\vx))) \hat{\nabla} \cdisthat(\collmap(\vx)) \nabla \collmap(\vx) \\
  &= \phaseref'(\cdisthat(\collmap(\vx))) \hat{\nabla}
  \cdisthat(\collmap(\vx)) \collrot \
  = -\frac{1}{\pi \epsilon} \frac{1}{1 + \left( \frac{\cdisthat(\collmap(\vx))}{\epsilon} \right)^2} \hat{\nabla} \cdisthat(\collmap(\vx)) \collrot,
\end{split}
\end{equation}
and so we have
\begin{equation}\label{eqn:phase_field_of_colloid_gradient_sq}
\begin{split}
  |\nabla \phase(\vx)|^2 &= \left(\frac{1}{\pi \epsilon}\right)^2 \frac{1}{\left(1 + \left( \frac{\cdisthat(\collmap(\vx))}{\epsilon} \right)^2 \right)^2} |\hat{\nabla} \cdisthat (\collmap(\vx))|^2.
\end{split}
\end{equation}

We note the following relation between bulk and surface integrals.
Given $f\in C(\overline\Om)$, define
\begin{equation}\label{eqn:phase_field_perimeter}
  J_\epsilon(f) = C_0 \frac{\epsilon}{2} \iO f(\vx)|\nabla \phase(\vx)|^2 d\vx, \quad \text{where } C_0 := 4 \pi.
\end{equation}
Then one can show that (since $|\hat{\nabla} \cdisthat (\cdot)|=1$ near the zero level set of $\cdisthat(\cdot)$)
\begin{equation*}
  \lim_{\epsilon \rightarrow 0} J_\epsilon(f) = \int_{\partial \Oc} f(\vx) ds(\vx).
\end{equation*}
In particular, $\lim_{\epsilon \rightarrow 0} J_\epsilon(1) = |\partial \Oc|$
is the surface area of the boundary of the colloid $\Oc$.

%---------------------------------------------------------------------------
\subsubsection{Weak anchoring}\label{sec:weak-anchoring}
%--------------------------------------------------------------------------

We model boundary conditions on the colloid's surface by imposing
\emph{weak anchoring} \cite{Virga_book1994, deGennes_book1995} with $\Ea$, $\Ea^h$.
A standard, but somewhat ad-hoc, form for the energy in the $\vQ$-tensor model
\cite[eqn. (66)]{Mottram_arXiv2014} is
\begin{equation}\label{eqn:vQ-tensor_colloid_surface_energy}
\begin{split}
  J(\vQ) = \frac{\anchorcoef}{2} \int_{\partial \Oc} |\vQ - \vQ_0|^2,
\end{split}
\end{equation}
where $\vQ_0$ is the desired value of $\vQ$ on the boundary $\partial \Oc$
and $\anchorcoef$ is a large weighting parameter (penalty approach).
For our purposes, we will focus on imposing homeotropic anchoring, i.e. we take $\vQ_0$ to have the form of a uniaxial nematic:
\begin{equation*}
  \vQ_0 = s^* \left( \vnu \otimes \vnu  - \frac{1}{3} \vI \right),
\end{equation*}
where $\vnu$ is the normal of $\partial \Oc$
and $s^*$ is the global minimum of the double well potential.
Using the expression
$\vQ=s\big(\vn\otimes\vn - \frac13\vI\big)$ for uniaxial nematics,
along with the facts that $\vQ, \vQ_0$ are symmetric, $|\vQ -
\vQ_0|^2 = \tr \left[ (\vQ - \vQ_0)^2 \right]$, and $|\vn|=|\vnu|=1$,
a straightforward calculation gives
\begin{equation}\label{eqn:vQ_tensor_weak_energy_mottram}
  |\vQ - \vQ_0|^2 = 2 s s^* \left[|\vn|^2 |\vnu|^2 - (\vn \cdot \vnu)^2 \right] + \frac{2}{3} (s - s^*)^2 |\vnu|^2.
\end{equation}

We use \eqref{eqn:vQ_tensor_weak_energy_mottram} as motivation for our
weak anchoring energy in the context of the Ericksen model combined with
the immersed boundary method. In fact, noting that $\vnu = \nabla
\phase / |\nabla \phase|$, simplifying $s s^*$ in
\eqref{eqn:vQ_tensor_weak_energy_mottram} with $s^2$, and normalizing
the constants, we resort to \eqref{eqn:phase_field_perimeter} to
define the continuous weak anchoring energy as
\begin{equation}\label{eqn:colloid_weak_anchoring_energy}
\begin{split}
  \Ea[s,\vn] := \frac{\anchorcoef}{2} C_0 \epsilon \iO s^2 \left[ |\vn|^2 |\nabla \phase|^2 - (\nabla \phase \cdot \vn)^2 \right]
   + \frac{\anchorcoef}{2} C_0 \epsilon \iO |\nabla \phase|^2 (s - s^*)^2.
\end{split}
\end{equation}
Note that \eqref{eqn:colloid_weak_anchoring_energy} imposes normal anchoring
of the director field when minimized.
However, since \eqref{eqn:colloid_weak_anchoring_energy}
is invariant with respect to arbitrary changes in the sign of
$\vn$, we expect a different behavior of the director field
$\vn$ close to the colloid boundary $\partial\Oc$  from that in Section
\ref{sec:conforming_mesh}. This is confirmed by the numerical
experiments of Subsection \ref{sec:defects-weak-anchoring}.

The next task is to modify the energies $E_1 [s, \vn]$ and $E_2 [s]$
to account for the colloid, or equivalently for the phase variable
$\phi$. One possible choice is
\begin{equation}\label{energy-with-phi}
E_1[s, \vn]:=\int_\Om \phi \,\big(\kappa|\nabla s|^2+ s^2|\nabla\vn|^2 \big) dx,
\qquad
E_2[s]:=\int_\Om \phi \, \psi(s) dx,
\end{equation}
which has the disadvantage that the system is near singular in
$\Oc$ where $\phi\approx0$ and still requires values for $\vn$ and
$s$ inside $\Oc$. We thus prefer to take a extreme approach and
think of the colloid $\Oc$ as a rigid membrane filled with liquid crystal
material and subjected to the same weak anchoring condition as the exterior.
In the limit $\epsilon\to0$, the two systems inside and outside of
$\Oc$ decouple and we may simply consider the latter.
This suggests keeping the original forms for $E_1$ and $E_2$
in \eqref{energy}. Therefore, we use the continuous total energy
in \eqref{energy_with_colloid}.

We now discuss the discrete counterpart of \eqref{energy_with_colloid},
starting with $\Ea[s,\vn]$. We first
introduce the following discrete inner products:
\begin{equation}\label{eqn:discrete_inner_prod_anchor_alt}
\begin{split}
  \ipanchor^{\vn}(\vn_h, \vv_h ; s_h, \nabla \phase) & := \iO I_h \left\{ s_h^2 \left[ (\vn_h \cdot \vv_h) |\nabla \phase|^2 - (\nabla \phase \cdot \vn_h) (\nabla \phase \cdot \vv_h) \right] \right\}, \\
  \ipanchor^{s}(s_h, z_h ; \vn_h, \nabla \phase) & := \iO I_h \left\{ s_h z_h \left[ |\vn_h|^2 |\nabla \phase|^2 - (\nabla \phase \cdot \vn_h)^2 \right] \right\},
\end{split}
\end{equation}
where $I_h$ is the Lagrange interpolant. These expressions correspond
to using so-called mass lumping quadrature which, for all $f\in C^0(\overline{\Om)}$,
reads
\begin{equation}\label{mass-lumping}
  \int_\Om I_h f = \sum_{T\in\Tk_h} \int_T I_h f
  = \sum_{T\in\Tk_h} \frac{|T|}{d+1} \sum_{i=1}^{d+1} f(x_T^i),
\end{equation}
where $\{x_T^i\}_{i=1}^{d+1}$ are the vertices of $T$. This quadrature
rule is exact for piecewise linear polynomials and has the advantage
that the finite element realization of \eqref{eqn:discrete_inner_prod_anchor_alt}
is a \emph{diagonal} matrix.  The following result elaborates on this.
\begin{lem}[monotone property for lumped mass matrix]\label{lem:monotone_lumped_mass}
Let $m_h : \Uh \times \Uh \rightarrow \R$ be a bilinear form defined by
\begin{equation*}
  m_h(\vn_h, \vv_h) := \iO I_h \left[ \vn_h \cdot H(x) \vv_h \right] dx,
\end{equation*}
where $H$ is a continuous $d \times d$ symmetric positive semi-definite matrix.  If $|\vn_h(x_i)| \geq 1$ at all nodes $x_i$ in $\Nk_h$, then
\begin{equation*}
  m_h(\vn_h, \vn_h) \geq m_h \left( \frac{\vn_h}{|\vn_h|}, \frac{\vn_h}{|\vn_h|} \right).
\end{equation*}
\end{lem}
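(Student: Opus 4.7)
The plan is to reduce the inequality to a nodewise statement by exploiting the fact that mass lumping turns $m_h$ into a pure nodal sum with positive weights. Applying the quadrature rule \eqref{mass-lumping} to $f(x) = \vn_h(x) \cdot H(x) \vn_h(x)$ (which is continuous on $\overline{\Om}$) and regrouping contributions by vertex, I would write
\begin{equation*}
m_h(\vn_h, \vn_h) = \sum_{x_i \in \Nk_h} \omega_i \, \vn_h(x_i) \cdot H(x_i) \vn_h(x_i),
\qquad
\omega_i := \sum_{T \in \Tk_h, \, x_i \in T} \frac{|T|}{d+1} > 0,
\end{equation*}
and likewise for the right-hand side. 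Note that $\vn_h / |\vn_h|$ need not lie in $\Uh$, but in the lumped expression it only appears evaluated at the vertices, so the right-hand side is unambiguously $\sum_{i} \omega_i \, (\vn_h(x_i)/|\vn_h(x_i)|) \cdot H(x_i) (\vn_h(x_i)/|\vn_h(x_i)|)$, where the hypothesis $|\vn_h(x_i)| \ge 1$ ensures the normalization is well defined.

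Next, at each vertex $x_i$ I would factor out the length: setting $\vm_i := \vn_h(x_i)/|\vn_h(x_i)|$,
\begin{equation*}
\vn_h(x_i) \cdot H(x_i) \vn_h(x_i) = |\vn_h(x_i)|^2 \, \vm_i \cdot H(x_i) \vm_i.
\end{equation*}
Since $H(x_i)$ is symmetric positive semi-definite, $\vm_i \cdot H(x_i) \vm_i \ge 0$, and since $|\vn_h(x_i)|^2 \ge 1$ by hypothesis,
\begin{equation*}
|\vn_h(x_i)|^2 \, \vm_i \cdot H(x_i) \vm_i \ge \vm_i \cdot H(x_i) \vm_i.
\end{equation*}

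Finally, multiplying by the positive weight $\omega_i$ and summing over all nodes $x_i \in \Nk_h$ yields
\begin{equation*}
m_h(\vn_h, \vn_h) \ge \sum_{x_i \in \Nk_h} \omega_i \, \vm_i \cdot H(x_i) \vm_i = m_h\!\left(\frac{\vn_h}{|\vn_h|}, \frac{\vn_h}{|\vn_h|}\right),
\end{equation*}
which is the desired inequality. There is no substantive obstacle here: the entire argument is a nodewise scalar comparison, and the only conceptual point worth flagging is that mass lumping is what makes it legitimate to talk about $\vn_h/|\vn_h|$ on the right, since after lumping no interpolation of this (generally non-polynomial) vector field is required.
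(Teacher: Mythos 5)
Your proof is correct and follows essentially the same route as the paper: both expand $m_h$ via the mass-lumping quadrature \eqref{mass-lumping} into a nodal sum with nonnegative weights, factor out $|\vn_h(x_i)|^2$ at each node, and conclude from $|\vn_h(x_i)|\ge 1$ together with the positive semi-definiteness of $H$. Your regrouping by vertex (rather than summing element by element as the paper does) and your remark that lumping makes the normalized field meaningful are only cosmetic differences.
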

\begin{proof}
In view of \eqref{mass-lumping}, we rewrite $m_h(\vn_h, \vv_h)$ as
\begin{equation*}
  m_h(\vn_h, \vv_h) = \sum_{T \in \Tk_h} \frac{|T|}{d+1} \sum_{i = 1}^{d+1} \left[ \vn_h(x_T^i) \cdot H(x_T^i) \vv_h(x_T^i) \right].
\end{equation*}
Then, clearly
\begin{equation*}
\begin{split}
  m_h(\vn_h, \vn_h) &= \sum_{T \in \Tk_h} \frac{|T|}{d+1} \sum_{i = 1}^{d+1} |\vn_h(x_T^i)|^2 \left[ \frac{\vn(x_T^i)}{|\vn_h(x_T^i)|} \cdot H(x_T^i) \frac{\vn_h(x_T^i)}{|\vn_h(x_T^i)|} \right] \\
  &\geq \sum_{T \in \Tk_h} \frac{|T|}{d+1} \sum_{i = 1}^{d+1} \left[ \frac{\vn_h(x_T^i)}{|\vn_h(x_T^i)|} \cdot H(x_T^i) \frac{\vn_h(x_T^i)}{|\vn_h(x_T^i)|} \right] = m_h \left( \frac{\vn_h}{|\vn_h|}, \frac{\vn_h}{|\vn_h|} \right),
\end{split}
\end{equation*}
which concludes the proof.
\end{proof}

To apply Lemma \ref{lem:monotone_lumped_mass} to the first bilinear
form in \eqref{eqn:discrete_inner_prod_anchor_alt} we observe that $H$ reads
\[
H = s_h^2 \Big(|\nabla\phi|^2 \vI - \nabla\phi\otimes\nabla\phi \Big),
\]
and is symmetric positive semi-definite, whence
\begin{equation}\label{monotone_anchoring}
  \ipanchor^{\vn}(\vn_h, \vn_h ; s_h, \nabla \phase) \geq \ipanchor^{\vn} \left( \frac{\vn_h}{|\vn_h|},  \frac{\vn_h}{|\vn_h|} ; s_h, \nabla \phase \right).
\end{equation}

Thus, we take the discrete weak anchoring energy to be
\begin{equation}\label{eqn:colloid_weak_anchoring_energy_discrete}
\begin{split}
  \Ea^h[s_h,\vn_h] := \frac{\anchorcoef}{2} C_0 \epsilon \left( \ipanchor^{\vn}(\vn_h, \vn_h ; s_h, \nabla \phase) + \iO |I_h \nabla \phase|^2 (s_h - s^*)^2 \right);
\end{split}
\end{equation}
note that $\ipanchor^{\vn}(\vn_h, \vn_h ; s_h, \nabla \phase) =
\ipanchor^{s}(s_h, s_h ; \vn_h, \nabla \phase)$.
The discrete total energy is then given by \eqref{discrete_energy_with_colloid}.

%---------------------------------------------------------------------------
\subsubsection{Penalizing Dirichlet Conditions}\label{sec:penalize_dirichlet}
%--------------------------------------------------------------------------

The weak anchoring energy \eqref{eqn:colloid_weak_anchoring_energy} is
insensitive to the orientation of the director field $\vn$, a drawback
of this approach for the Ericksen model. To impose a general
Dirichlet boundary condition $(g,\vr)$ on the colloid's surface,
in a more consistent manner, we consider a different penalization.

Consider the following penalization energy:
\[
J[s,\vu] = \frac{\anchorcoef}{2} \int_{\partial\Oc} |\vu-\vr|^2 + |s-g|^2,
\]
where $\anchorcoef$ is a large penalty parameter. We proceed as in
Section \ref{sec:weak-anchoring}, that is we first manipulate this formula to
get one with suitable monotonicity properties. Write $\vu = s \vn$ and
$\vr = g \vnu$, replace $g$ by $s$ because $s\approx g$, and expand the
first square using that $|\vn| = |\vnu| = 1$ to get
$|\vu-\vr|^2 = s^2 |\vn-\vnu|^2$.
We next express the line energy $J[s,\vu]$ as a bulk energy within the
immerse boundary method. Recall that $\vnu=\nabla\phi/|\nabla\phi|$
and make use of \eqref{eqn:phase_field_perimeter} to define
\begin{equation}\label{penalty}
\Ea[s,\vn] := \frac{\anchorcoef}{2} C_0\epsilon \int_\Omega
|\nabla\phi|^2 \left\{ s^2 \left| \vn - \frac{\nabla \phi}{|\nabla \phi|} \right|^2 + |s-g|^2 \right\}.
\end{equation}

The discrete form of this penalized Dirichlet energy is given by
\begin{equation}\label{discrete-dirichlet-penalty}
  \Ea^h[s_h,\vn_h] := \frac{\anchorcoef}{2} C_0 \epsilon \left( \widetilde{\ipanchor}^{s}(s_h, s_h ; \vn_h, \nabla \phase)
  + \int_{\Omega} | I_h \nabla \phase|^2 (s_h - g_h)^2 \right),
\end{equation}
where
\begin{equation}\label{dirichlet-penalty-discrete-forms}
\begin{split}
\widetilde{\ipanchor}^{\vn}(\vn_h, \vv_h ; s_h, \nabla \phase) &:=
   \int_\Omega I_h \left\{ s_h^2 |\nabla \phase|^2 \vn_h \cdot \vv_h \right\}, \\
\linanchor(\vv_h; s_h, \nabla \phase) &:=
    \int_\Omega I_h \left\{ s_h^2 |\nabla \phase| \nabla \phase \cdot \vv_h \right\}, \\
\widetilde{\ipanchor}^{s}(s_h, z_h ; \vn_h, \nabla \phase) &:=
   \int_\Omega I_h \left\{ s_h z_h \big| |\nabla \phase| \vn_h - \nabla \phase \big|^2 \right\}.
\end{split}
\end{equation}
Note that
\begin{equation}\label{dirichlet-penalty-discrete-form-equivalent}
\begin{split}
\widetilde{\ipanchor}^{s}(s_h, s_h ; \vn_h, \nabla \phase) = \widetilde{\ipanchor}^{\vn}(\vn_h, \vn_h ; s_h, \nabla \phase)
- 2 \linanchor(\vn_h; s_h, \nabla \phase) + \int_{\Omega} I_h \left\{ s_h^2 |\nabla \phase|^2 \right\},
\end{split}
\end{equation}
and that $\widetilde{\ipanchor}^{\vn}$ and $\linanchor(\vv_h; s_h, \nabla \phase)$
are also useful in computing variational derivatives of \eqref{discrete-dirichlet-penalty}.
Recall that the total discrete energy is given by \eqref{discrete_energy_with_colloid}.
The presence of the Lagrange interpolation operator $I_h$ in
\eqref{discrete-dirichlet-penalty}, \eqref{dirichlet-penalty-discrete-forms}
is needed to ensure that Step 2 (projection) of the Algorithm in
Subsection \ref{algorithm} decreases the energy
(see \eqref{monotone-dirichlet-penalty} below).

\begin{lem}[monotone property for penalized Dirichlet energy]\label{lem:monotone_penal_dirichlet}
Let $d \geq 2$ and let $\vn$, $\vm$ be arbitrary vectors in $\mathbb{S}^{d-1}$. If
$\vt \in \R^d$ such that $|\vn+\vt|\ge 1$, then
\begin{equation*}
  | (\vn + \vt) - \vm | \geq \left| \frac{\vn + \vt}{|\vn + \vt|} - \vm \right|.
\end{equation*}
\end{lem}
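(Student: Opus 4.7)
My plan is to reduce the inequality to a clean algebraic statement by squaring both sides and introducing the single abbreviation $\vw := \vn + \vt$, $\alpha := |\vw| \ge 1$. Note that the unit-length hypothesis on $\vn$ is only used through the derived bound $|\vw|\ge 1$, while the unit-length of $\vm$ will be used directly.

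Since $|\vm|=1$, expanding gives
\[
  |\vw - \vm|^2 = \alpha^2 - 2\, \vw\cdot\vm + 1,
  \qquad
  \left|\frac{\vw}{\alpha} - \vm\right|^2 = 2 - \frac{2\, \vw\cdot\vm}{\alpha}.
\]
Subtracting and factoring yields the identity
\[
  |\vw - \vm|^2 - \left|\frac{\vw}{\alpha} - \vm\right|^2
  = (\alpha - 1)\left[(\alpha + 1) - \frac{2\, \vw\cdot\vm}{\alpha}\right].
\]
The first factor is $\ge 0$ by hypothesis. For the bracketed factor, Cauchy--Schwarz gives $\vw\cdot\vm \le |\vw|\,|\vm| = \alpha$, so $2\vw\cdot\vm/\alpha \le 2$, and then $\alpha \ge 1$ implies $2 \le \alpha + 1$. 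Hence the bracket is also nonnegative, and the claimed inequality follows after taking square roots (both sides being nonnegative).

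There is no serious obstacle here; the only thing to watch is that the factorization over the common denominator $\alpha$ is performed correctly, and that the combination $2 \le \alpha+1$ used at the end really does require the hypothesis $\alpha \ge 1$ (and not merely $\alpha>0$), which is exactly what the assumption $|\vn+\vt|\ge 1$ supplies. This is also the place where the projection step of the Algorithm earns its monotonicity, since after a tangential update one has $|\vn_h(x_i) + \vt_h(x_i)|^2 = 1 + |\vt_h(x_i)|^2 \ge 1$ at every node, so the lemma applies pointwise and, combined with mass lumping as in Lemma~\ref{lem:monotone_lumped_mass}, yields the desired monotone inequality for $\widetilde{\ipanchor}^{s}$.
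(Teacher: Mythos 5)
Your argument is correct and is essentially the paper's proof: both square the two sides, subtract, and factor the difference as $\big(|\vn+\vt|-1\big)$ times the bracket $|\vn+\vt|+1-\tfrac{2(\vn+\vt)\cdot\vm}{|\vn+\vt|}$, whose nonnegativity follows from Cauchy--Schwarz (equivalently $\tfrac{\vn+\vt}{|\vn+\vt|}\cdot\vm\le 1$) together with $|\vn+\vt|\ge 1$. The only cosmetic difference is that the paper writes the computation in terms of the normalized vector $\vn_0=\tfrac{\vn+\vt}{|\vn+\vt|}$, which is exactly your $\vw/\alpha$.
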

\begin{proof}
Let %
$\vn_0:= \frac{\vn+\vt}{|\vn+\vt|}$ and note that
\[
|(\vn+\vt) - \vm|^2 = |\vn+\vt|^2 + 1- 2 \, \vn_0\cdot\vm \, |\vn+\vt|,
\qquad
|\vn_0 - \vm|^2 = 2 - 2 \, \vn_0\cdot\vm.
\]
This implies
\begin{align*}
  |(\vn+\vt) - \vm|^2 - |\vn_0 - \vm|^2 &= |\vn+\vt|^2 - 1
  -2 \, \vn_0\cdot\vm \big(|\vn+\vt| - 1 \big)
  \\
  & = \big(|\vn+\vt| - 1\big) \big(|\vn+\vt| + 1 - 2 \vn_0\cdot\vm \big)
  \ge 0,
\end{align*}
because $|\vn+\vt|\ge 1$. This is the asserted estimate.
\end{proof}

We now apply Lemma \ref{lem:monotone_penal_dirichlet} to $\widetilde{\ipanchor}^{s}$
in \eqref{discrete-dirichlet-penalty}, by setting $\vm = \nabla \phase / |\nabla \phase|$,
and obtain the monotonicity property
\begin{equation}\label{monotone-dirichlet-penalty}
\begin{split}
    \widetilde{\ipanchor}^{s}(s_h, s_h ; \vn_h + \vt_h, \nabla \phase) &= \int_\Omega I_h \left\{ |\nabla \phase|^2 s_h^2 \left| (\vn_h + \vt_h) - \frac{\nabla \phase}{|\nabla \phase|} \right|^2 \right\} \\
    &\ge \widetilde{\ipanchor}^{s} \left( s_h, s_h ; \frac{\vn_h + \vt_h}{|\vn_h + \vt_h|}, \nabla \phase \right),
\end{split}
\end{equation}
where it is assumed that $\vn_h (x_i) \cdot \vt_h (x_i) = 0$ at all nodes $x_i$.

%---------------------------------------------------------------------------
\subsubsection{Minimization scheme}%\label{sec:}
%--------------------------------------------------------------------------

We apply the Algorithm in Section \ref{algorithm} to the total energy \eqref{discrete_energy_with_colloid}
in the case of either weak anchoring energy \eqref{eqn:colloid_weak_anchoring_energy_discrete}
or penalized Dirichlet energy \eqref{discrete-dirichlet-penalty}.
To this end, we need the following additional variational derivatives of $\Ea^h$.
The first order variation of $\Ea^h[s_h,\vn_h]$ in the direction $\vv_h \in \Yh (\vn_h^k) \cap \Hbdy{\bdyvn}$ at the director variable $\vn_h^k$ reads
\begin{equation}\label{first_order_discrete_variation_dvN_weak_anchor}
\begin{split}
 \delta_{\vn_h} \Ea^h [s_h^k, \vn_h^k ; \vv_h] &= \anchorcoef C_0\epsilon
 \, \ipanchor^{\vn}(\vn_h^k, \vv_h ; s_h^k, \nabla \phase),
\end{split}
\end{equation}
for the energy \eqref{eqn:colloid_weak_anchoring_energy_discrete}, whereas
the expression reads
\begin{equation}\label{first_order_discrete_variation_dvN_penalty}
\begin{split}
 \delta_{\vn_h} \Ea^h [s_h^k, \vn_h^k ; \vv_h] &= \anchorcoef C_0 \epsilon \left\{
 \widetilde{\ipanchor}^{\vn}(\vn_h^k, \vv_h ; s_h^k, \nabla \phase)
  - \linanchor(\vv_h; s_h^k, \nabla \phase) \right\},
\end{split}
\end{equation}
for the energy \eqref{discrete-dirichlet-penalty}.

The first order variation of $\Ea^h[s_h,\vn_h]$ in the direction $z_h \in \Sh \cap \Hbdy{\bdys}$ at the degree of orientation variable $s_h^k$ is
\begin{equation}\label{first_order_discrete_variation_dS_weak_anchor}
\begin{split}
 \delta_{s_h} \Ea^h [s_h^k ,\vn_h^{k} ; z_h] &= \anchorcoef C_0 \epsilon \left\{ \ipanchor^{s}(s_h^k, z_h ; \vn_h^k, \nabla \phase) + \iO |I_h \nabla \phase|^2 (s_h^k - s^*) z_h \right\}
\end{split}
\end{equation}
for the energy \eqref{eqn:colloid_weak_anchoring_energy_discrete}, whereas
the expression reads
\begin{equation}\label{first_order_discrete_variation_dS_penalty}
\begin{split}
 \delta_{s_h} \Ea^h [s_h^k ,\vn_h^{k} ; z_h] &= \anchorcoef C_0 \epsilon \left\{
 \widetilde{\ipanchor}^{s}(s_h^k, z_h ; \vn_h^k, \nabla \phase) + \int_{\Omega} | I_h \nabla \phase|^2 (s_h^k - g_h) z_h \right\}
\end{split}
\end{equation}
for the energy \eqref{discrete-dirichlet-penalty}.
Note that \eqref{monotone_anchoring} and \eqref{monotone-dirichlet-penalty} guarantee that the
projection step in our algorithm reduces the energy $\Ea^h[s_h,\vn_h]$,
whence Theorem \ref{energydecreasing} still holds in this context (see Theorem \ref{thm:energydecreasing_general}).

%---------------------------------------------------------------------------
\subsubsection{Simulating defects with weak anchoring}\label{sec:defects-weak-anchoring}
%--------------------------------------------------------------------------

The computational domain is a unit cube $\Om = [0, 1]^3$.  The colloid is represented by a sphere of radius $0.25$ centered at $(0.5,0.5,0.5)\tp$. We consider the boundary conditions shown in Figure \ref{fig:LC_Sphere_Inclusion_BCs_A}.  The strong anchoring condition on $\dOm$ is given by
\begin{equation}\label{eqn:saturn_washer_BCs}
  \vn = (0,0,1)\tp, ~ \text{on } \dOm, \quad
  s = s^*, ~ \text{on } \dOm,
\end{equation}
whereas $\Ea$ given in \eqref{eqn:colloid_weak_anchoring_energy_discrete}
models weak anchoring on the colloid's surface with parameters
\[
\anchorcoef = 300.0,  \qquad \epsilon = 0.06.
\]
We use the same double well potential as before.  The initial conditions in $\Om$ for the gradient flow are: $s = s^*$ and $\vn = (0,0,1)\tp$.

The equilibrium solution, with $\kappa = 1.0$,
is shown in Figure \ref{fig:Saturn_Washer_Phase_Field_all}.
The defect region is significantly different than that shown in Section \ref{sec:conform_disperse-point_defect}.
This is due to the fact that the weak anchoring energy \eqref{eqn:colloid_weak_anchoring_energy}
is invariant with respect to arbitrary changes in the sign of $\vn$.
\begin{figure}%[t]
\begin{center}
\subfloat{

\includegraphics[width=2.5in]{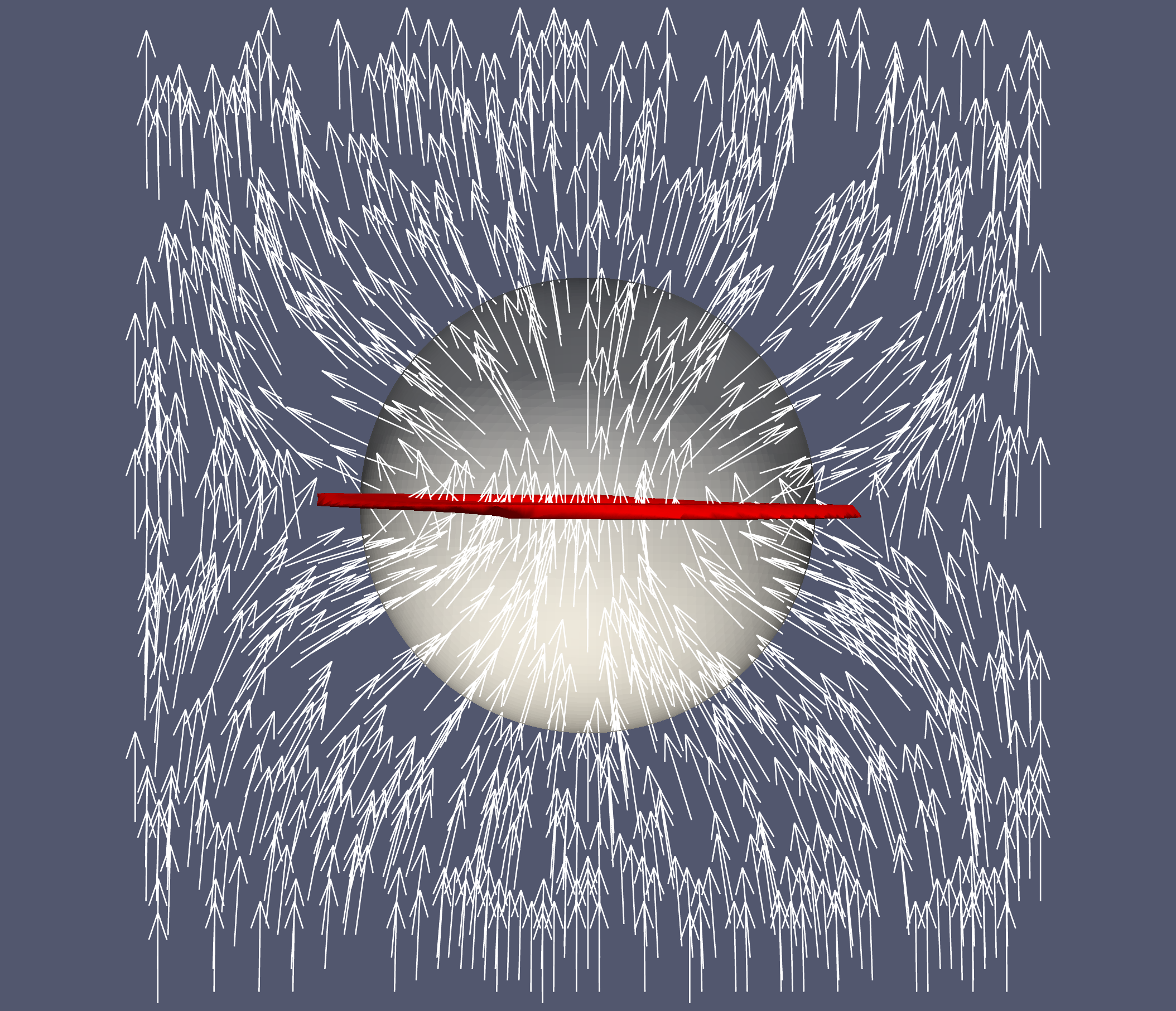}
%\caption{test.}
%\label{fig:Prism_Sphere_Hole_kappa_0_1_view_A}
}
\hspace{0.1in}
\subfloat{

\includegraphics[width=2.5in]{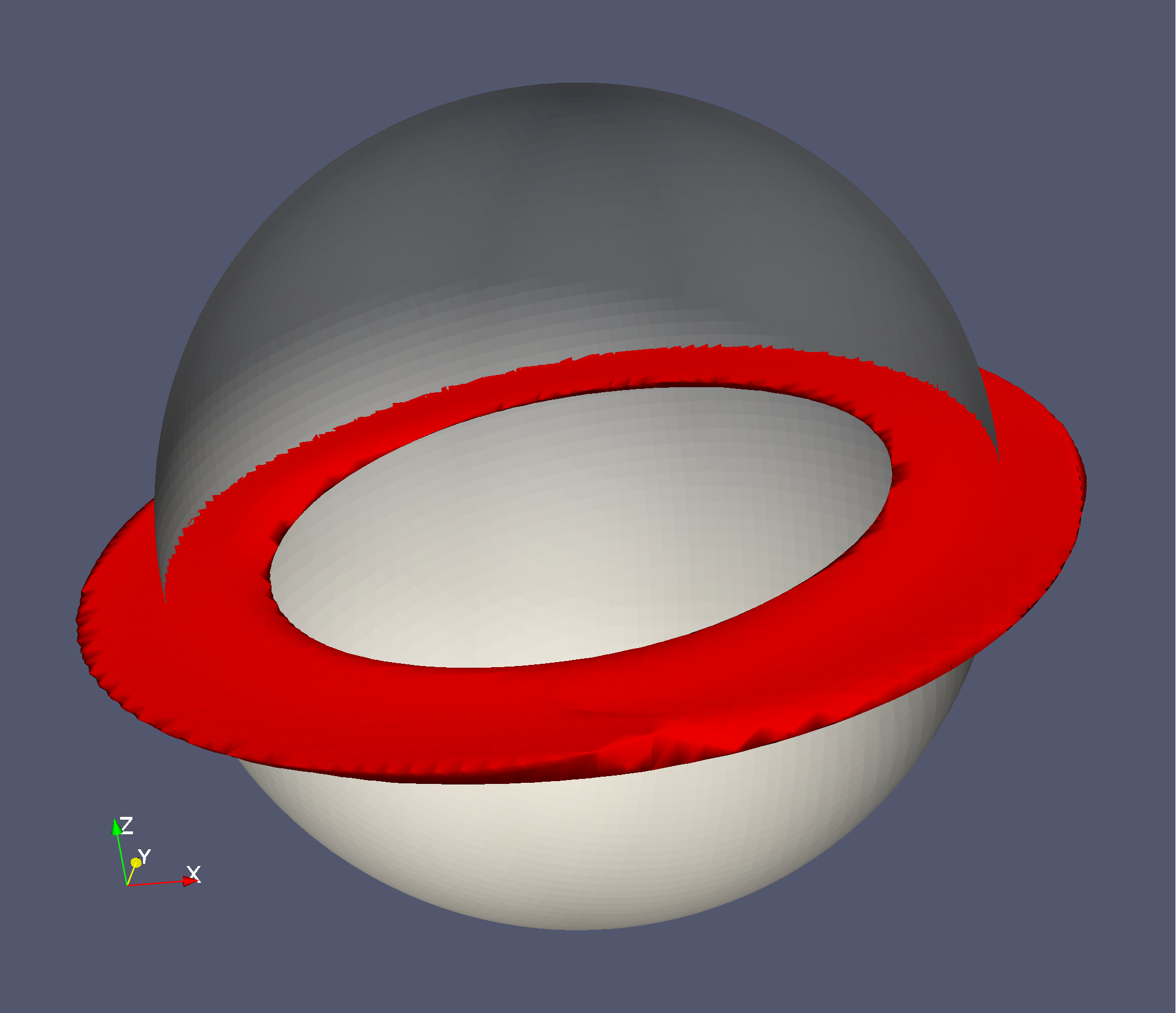}
%\caption{test.}
%\label{fig:Prism_Sphere_Hole_kappa_0_1_view_B}
}
\caption{Simulation results for boundary conditions in \eqref{eqn:saturn_washer_BCs} ($\kappa = 1.0$) using the immersed boundary approach and anchoring energy \eqref{eqn:colloid_weak_anchoring_energy_discrete}.  The surface mesh of the colloid boundary is shown and the $s=0.14$ iso-surface is plotted in red which indicates the defect region.  The director field is also shown (white arrows) on a plane parallel to the $y$-$z$ plane and passing through $x = 0.5$.  The defect region is spread out into a ``washer'' region (c.f. Section \ref{sec:conform_disperse-point_defect}).}
\label{fig:Saturn_Washer_Phase_Field_all}
\end{center}
\end{figure}

Next, we change the boundary conditions as we did in Section \ref{sec:conform_ring_defect}, i.e. the strong anchoring condition on $\dOm$ is given by
\begin{equation}\label{eqn:saturn_ring_immersed_bdy_BCs}
\begin{split}
  \vn \text{ smoothly interpolates between } (0,0,-1)\tp \text{ and }
  (0,0,1)\tp, ~ \text{and } s = s^*, ~ \text{on } \dOm,
\end{split}
\end{equation}
whereas $\Ea$ models weak anchoring on the colloid's surface.  The initial conditions in $\Om$ for the gradient flow are: $s = s^*$ and
\begin{equation*}
  \vn(x,y,z) =
  \begin{cases}
    (0,0,-1)\tp, ~ \text{ if } z < 0, \\
    (0,0,+1)\tp, ~ \text{ if } z \geq 0.
  \end{cases}
\end{equation*}

The equilibrium solution, with $\kappa = 1.0$,
is shown in Figure \ref{fig:Saturn_Ring_Phase_Field_all}.
Again, the choice of boundary conditions essentially induces the Saturn ring defect.
The radius of the Saturn ring is $\approx 0.38$.  Also, note that the structure of
the director field is not the same as would be obtained with the
Landau-deGennes model \cite{Alama_PRE2016}.
\begin{figure}%[t]
\begin{center}
\subfloat{

\includegraphics[width=2.5in]{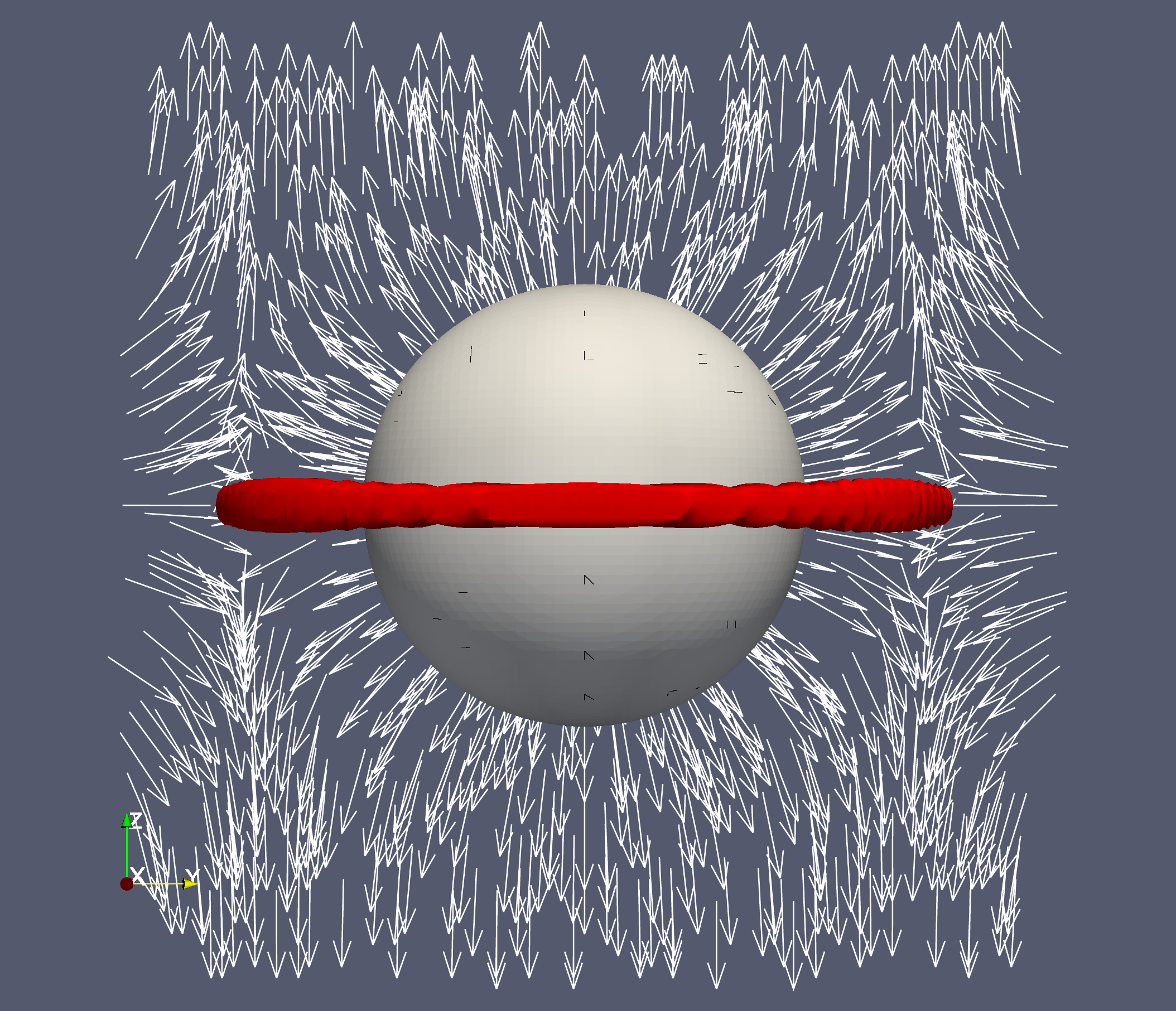}
%\caption{test.}
%\label{fig:Prism_Sphere_Hole_kappa_0_1_view_A}
}
\hspace{0.1in}
\subfloat{

\includegraphics[width=2.5in]{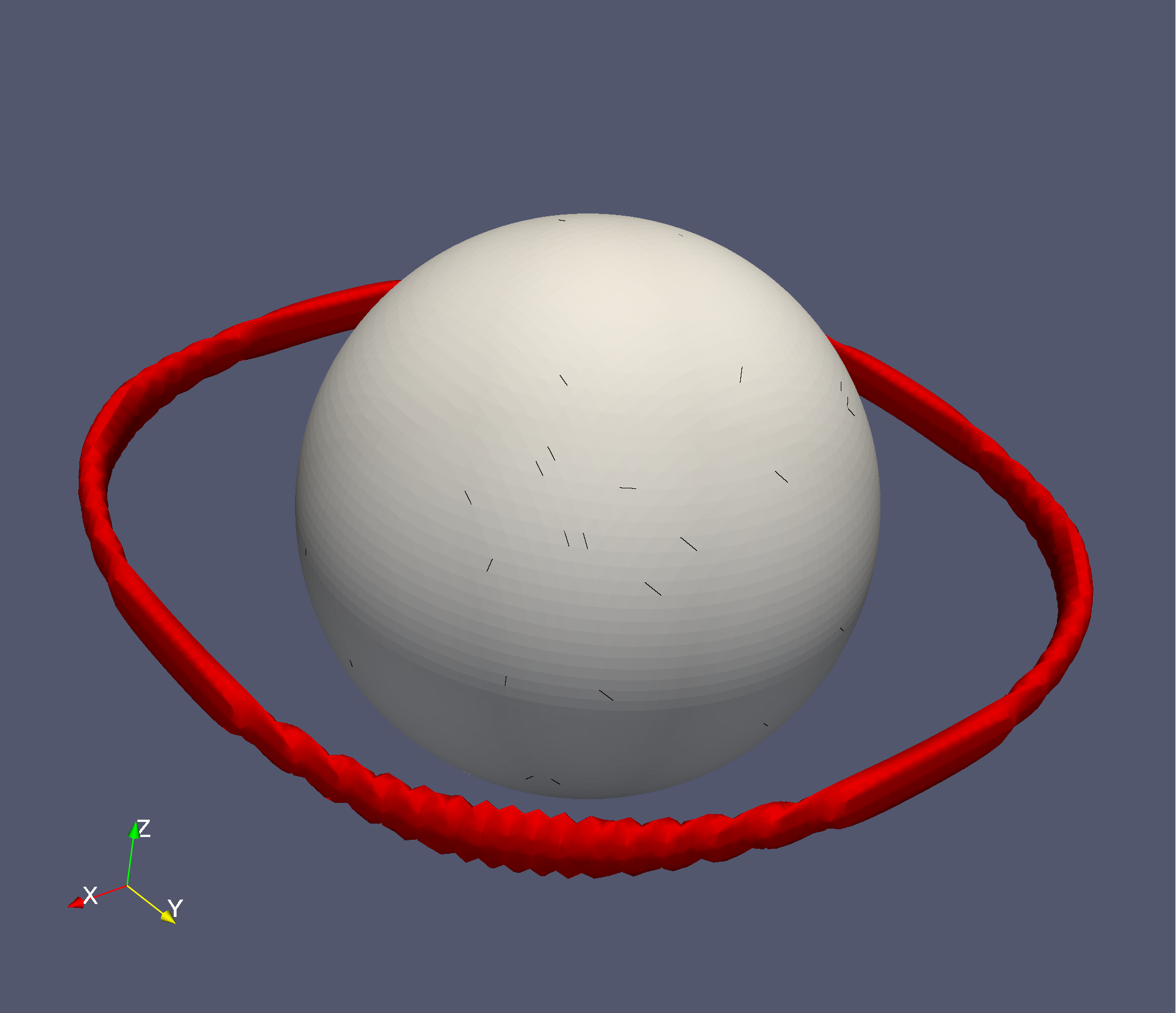}
%\caption{test.}
%\label{fig:Prism_Sphere_Hole_kappa_0_1_view_B}
}
\caption{Simulation results for boundary conditions in \eqref{eqn:saturn_ring_immersed_bdy_BCs} ($\kappa = 1.0$) using the immersed boundary approach and anchoring energy \eqref{eqn:colloid_weak_anchoring_energy_discrete}.  The surface mesh of the colloid boundary is shown and the $s=0.06$ iso-surface is plotted in red which indicates the defect region.  The director field is also shown (white arrows) on a plane parallel to the $y$-$z$ plane and passing through $x = 0.5$.  The defect region mimics the classic Saturn ring defect (c.f. Section \ref{sec:conform_ring_defect}).}
\label{fig:Saturn_Ring_Phase_Field_all}
\end{center}
\end{figure}

%---------------------------------------------------------------------------
\subsubsection{Simulating defects with penalized Dirichlet conditions}\label{sec:defects-penalized-dirichlet}
%--------------------------------------------------------------------------

We adopt the same computational conditions here, except that $\Ea$ is given by \eqref{discrete-dirichlet-penalty}
which models a Dirichlet condition (penalized) on the colloid's surface.
Everything else is the same as before, including parameter values.

Using the first set of boundary conditions \eqref{eqn:saturn_washer_BCs},
the equilibrium solution ($\kappa = 1.0$)
is shown in Figure \ref{fig:Hole_3D_Defect_Phase_Field_Dirichlet_all}.
The defect region is essentially the same as in Section \ref{sec:conform_disperse-point_defect} (see Figure \ref{fig:Prism_Sphere_Hole_kappa_1_all}).
In other words, the penalized Dirichlet condition is \emph{not} invariant with respect to arbitrary changes in the sign of $\vn$
\begin{figure}%[t]
\begin{center}
\subfloat{

\includegraphics[width=2.5in]{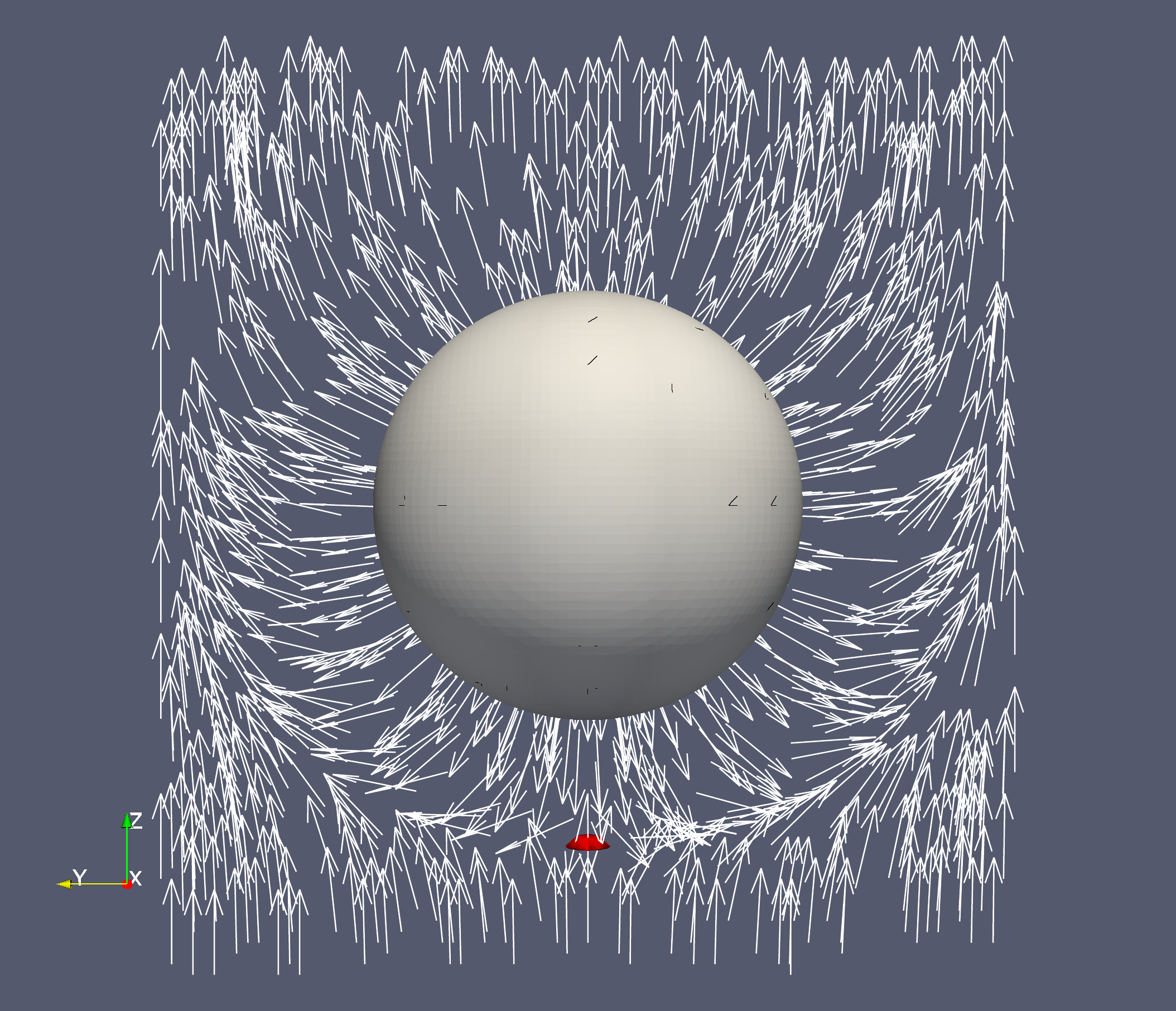}
%\caption{test.}
%\label{fig:Prism_Sphere_Hole_kappa_0_1_view_A????}
}
\hspace{0.1in}
\subfloat{

\includegraphics[width=2.5in]{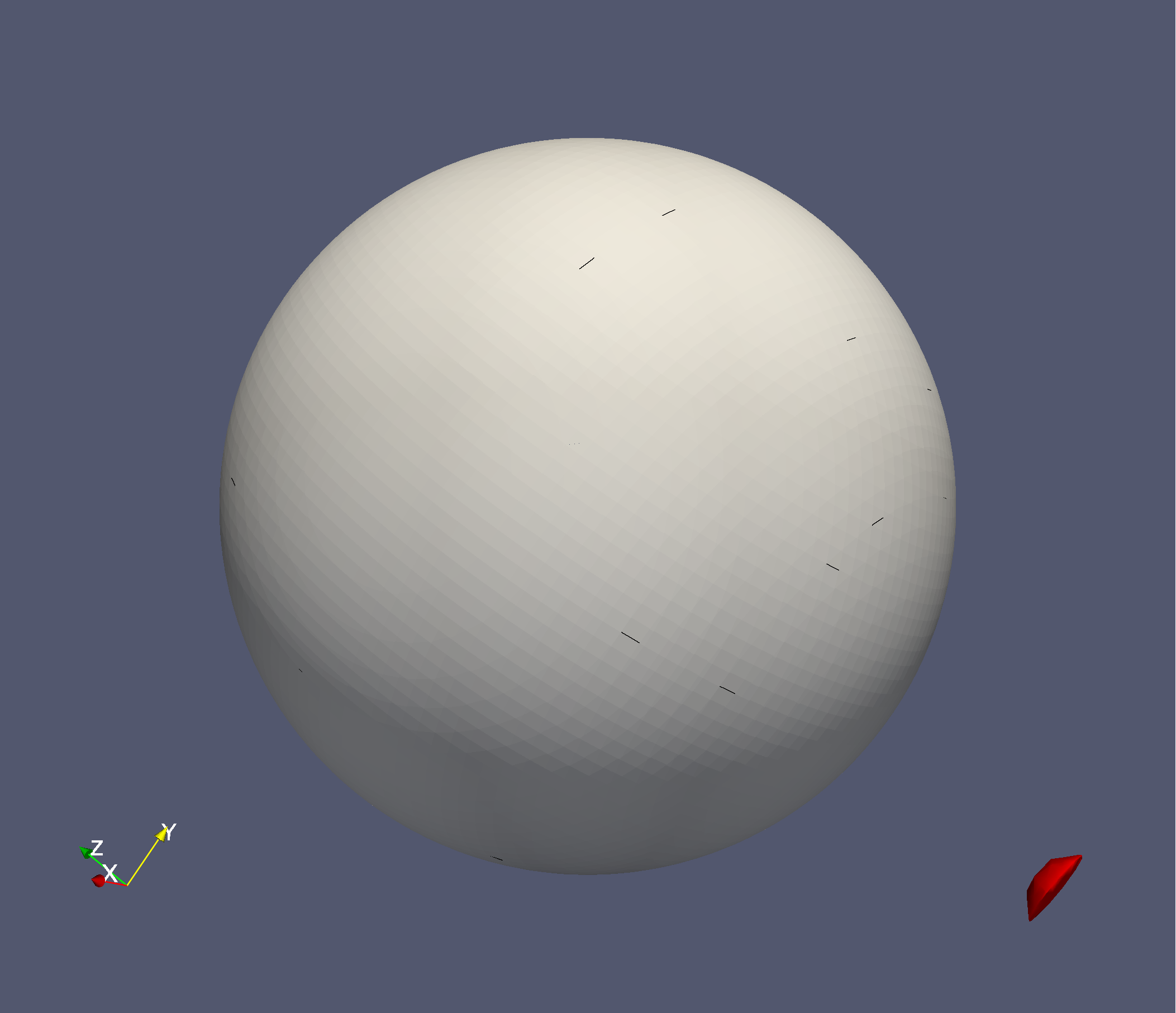}
%\caption{test.}
%\label{fig:Prism_Sphere_Hole_kappa_0_1_view_B????}
}
\caption{Simulation results for boundary conditions in \eqref{eqn:saturn_washer_BCs} ($\kappa = 1.0$) using the immersed boundary approach and anchoring energy \eqref{discrete-dirichlet-penalty}.  The surface mesh of the colloid boundary is shown and the $s=0.06$ iso-surface is plotted in red which indicates the defect region.  The director field is also shown (white arrows) on a plane parallel to the $y$-$z$ plane and passing through $x = 0.5$.  The defect region is ``point-like'' at height $z=0.11$ (c.f. Section \ref{sec:conform_disperse-point_defect}).}
\label{fig:Hole_3D_Defect_Phase_Field_Dirichlet_all}
\end{center}
\end{figure}

Next, we change the outer boundary conditions as we did in Section \ref{sec:conform_ring_defect}.
Using the second set of boundary conditions \eqref{eqn:saturn_ring_immersed_bdy_BCs},
the equilibrium solution ($\kappa = 1.0$)
is shown in Figure \ref{fig:Saturn_Ring_Phase_Field_Dirichlet_all}.

The choice of boundary conditions induces the Saturn ring defect (similar to Figure \ref{fig:Prism_Sphere_Hole_BC2_kappa_1_all}).
The radius of the Saturn ring is $\approx 0.405$.  Also, note that the structure of
the director field is not the same as would be obtained with the
Landau-deGennes model \cite{Alama_PRE2016}.
\begin{figure}%[t]
\begin{center}
\subfloat{

\includegraphics[width=2.5in]{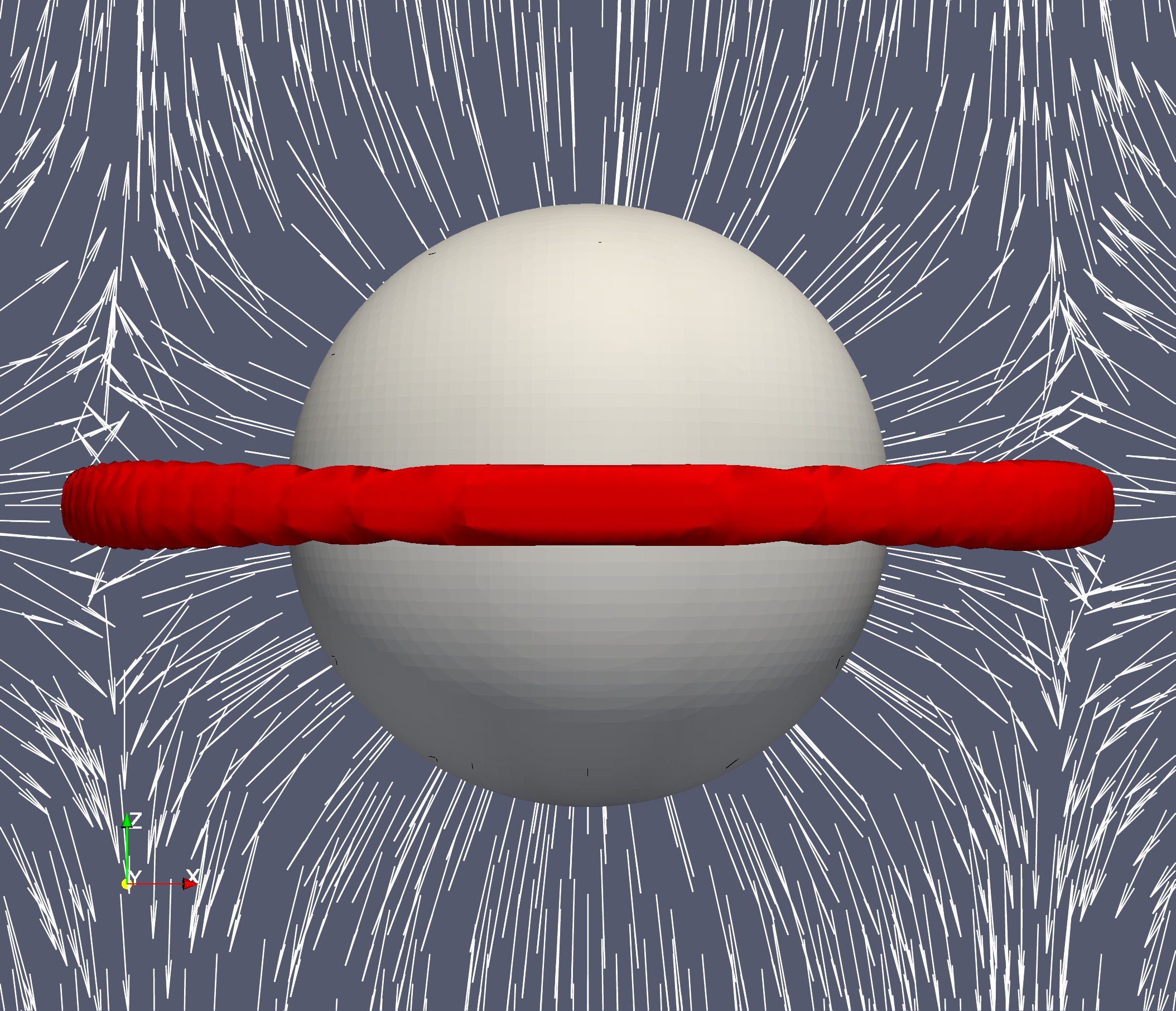}
%\caption{test.}
%\label{fig:Prism_Sphere_Hole_kappa_0_1_view_A}
}
\hspace{0.1in}
\subfloat{

\includegraphics[width=2.5in]{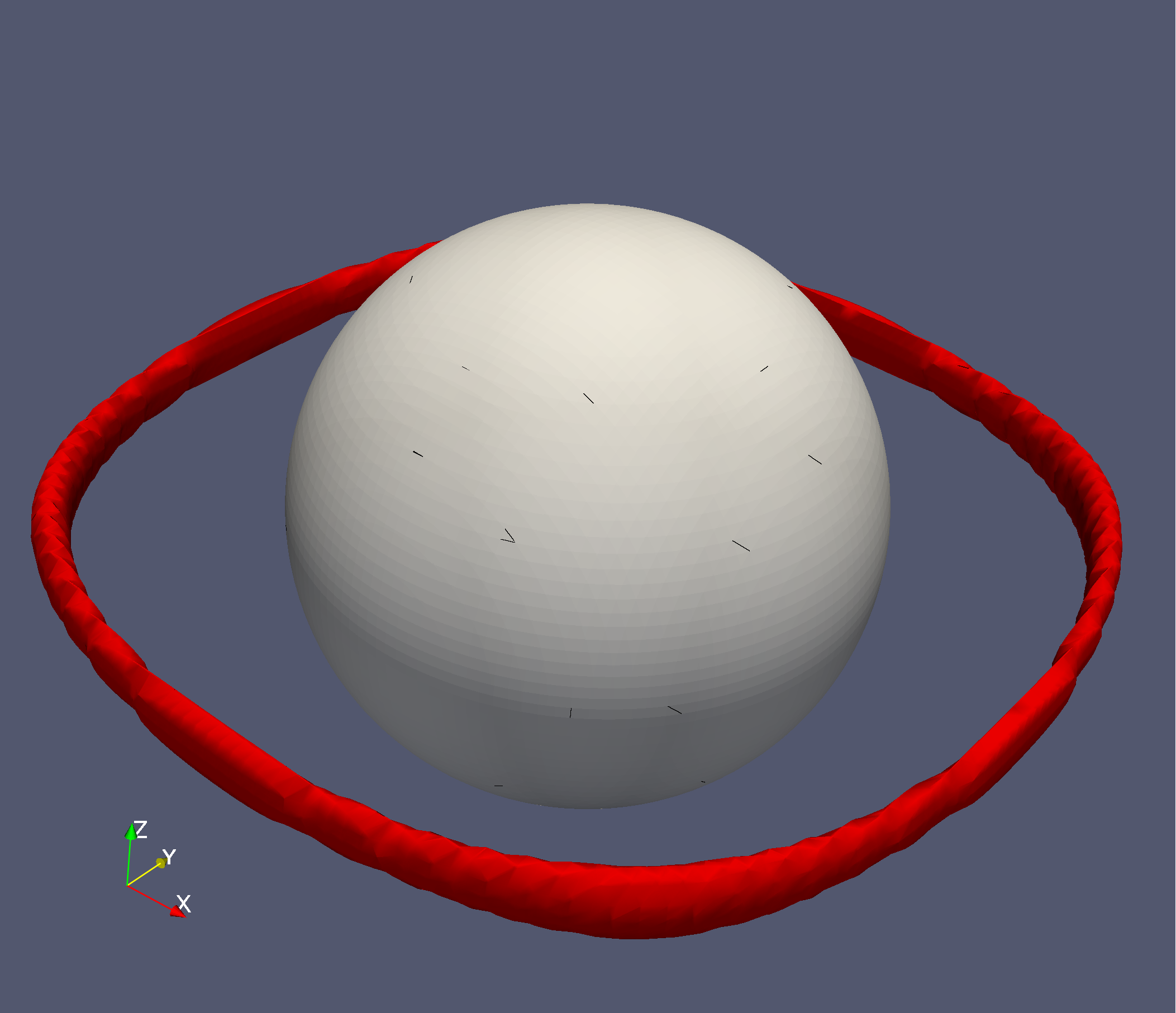}
%\caption{test.}
%\label{fig:Prism_Sphere_Hole_kappa_0_1_view_B}
}
\caption{Simulation results for boundary conditions in \eqref{eqn:saturn_ring_immersed_bdy_BCs} ($\kappa = 1.0$) using the immersed boundary approach and anchoring energy \eqref{discrete-dirichlet-penalty}.  The surface mesh of the colloid boundary is shown and the $s=0.08$ iso-surface is plotted in red which indicates the defect region.  The director field is also shown (white arrows) on a plane parallel to the $y$-$z$ plane and passing through $x = 0.5$.  The defect region mimics the classic Saturn ring defect (c.f. Section \ref{sec:conform_ring_defect}).}
\label{fig:Saturn_Ring_Phase_Field_Dirichlet_all}
\end{center}
\end{figure}

%---------------------------------------------------------------------------
\section{Electric field}\label{sec:elec_field}
%--------------------------------------------------------------------------

External field effects, such as an electric field, can be modeled by adding another term to the energy.  The following sections describe this as well as prove the monotone energy decreasing property of our algorithm applied to the modified energy.

%---------------------------------------------------------------------------
\subsection{Modified energy}\label{sec:elec_field_energy}
%--------------------------------------------------------------------------

The energy now takes the form
\begin{equation}\label{eqn:energy_with_external_field}
  E[s,\vn] = E_1[s,\vn] + E_2[s] + \Ea[s,\vn] + \Eext[s,\vn],
\end{equation}
where $\Eext[s,\vn]$ is the external field energy.  Following
\cite{Biscari_CMT2007, deGennes_book1995}, we
let $\Eext[s,\vn]$ be defined by
\begin{equation}\label{eqn:electric_energy}
  \Eext[s,\vn] := -\frac{\Ecoef}{2} \left( \ebar \iO (1 - s \ga) |\vE|^2 + \ea \iO s (\vE \cdot \vn)^2 \right),
\end{equation}
where $\vE$ is the \emph{given} (fixed) electric field.  The other
constants are related to the material properties of the liquid crystal
medium. Define $\varepsilon_{\parallel}$, $\varepsilon_{\perp}$ to be
the dielectric permittivities in the directions parallel and
orthogonal to a liquid crystal molecule.  Define $\ebar =
(\varepsilon_{\parallel} + 2 \varepsilon_{\perp})/3$ to be the average
dielectric permittivity (the $2$ is for the two directions orthogonal
to the director), $\ea = \varepsilon_{\parallel} -
\varepsilon_{\perp}$ the dielectric anisotropy, and $\ga = \ea/(3
\ebar)$ a dimensionless ratio.  We allow for $\ea$ to be positive or
negative and note that $0 \leq \ga \leq 1$ when $0 \leq \varepsilon_{\perp} \leq \varepsilon_{\parallel}$.

Note that the sign of the second integral in \eqref{eqn:electric_energy} can be negative (however it is bounded because $s$, $\vE$, and $\vn$ are bounded).  Thus, in order to preserve our energy decreasing minimization scheme (Section \ref{algorithm}), we first introduce a discrete quantity analogous to \eqref{eqn:discrete_inner_prod_anchor_alt}:
\begin{equation}\label{eqn:discrete_inner_prod_elec}
  \ipelec(s_h, \vn_h, \vv_h) = \iO I_h \left[ |\ea| |\vE|^2 (\vn_h \cdot \vv_h) - \ea s_h (\vE \cdot \vn_h) (\vE \cdot \vv_h) \right].
\end{equation}
To apply Lemma \ref{lem:monotone_lumped_mass} we see that
the matrix $H$ reads
\[
H = |\epsilon_a| |\vE|^2 \vI - \epsilon_a s_h \vE\otimes\vE,
\]
which is symmetric and positive semi-definite because $|s_h|\le
1$. Consequently
\begin{equation}\label{monotone_electric}
  \ipelec(s_h, \vn_h, \vn_h) \geq \ipelec \left( s_h, \frac{\vn_h}{|\vn_h|}, \frac{\vn_h}{|\vn_h|} \right).
\end{equation}

We now define the discrete counterpart of
\eqref{eqn:energy_with_external_field} to be
\begin{equation}\label{eqn:discrete energy_with_Eext}
  E^h [s_h, \vn_h] := E^h_1 [s_h, \vn_h ] + E^h_2 [s_h] + \Ea^h[s_h,\vn_h] + \Eext^h [s_h, \vn_h ],
\end{equation}
where the discrete electric energy is similar to \eqref{eqn:electric_energy}
and is given by
\begin{equation}\label{eqn:electric_energy_discrete}
  \Eext^h[s_h,\vn_h] = \frac{\Ecoef}{2} \left( -\ebar \iO (1 - s_h \ga) |\vE|^2 + \ipelec(s_h, \vn_h, \vn_h) - |\ea| \iO |\vE|^2 \right).
\end{equation}
Observe that \eqref{eqn:electric_energy_discrete} is an approximation of
\begin{equation}\label{eqn:electric_energy_discrete_approx}
  \Eext^h[s_h,\vn_h] = \frac{\Ecoef}{2} \left( -\ebar \iO (1 - s_h \ga) |\vE|^2 - \ea \iO s_h (\vE \cdot \vn_h)^2 + |\ea| \iO |\vE|^2 (|\vn_h|^2 - 1) \right),
\end{equation}
where the ``extra'' term is non-positive and \emph{consistent}
(i.e. it vanishes as $h \rightarrow 0$ provided the singular set
$\mathbb{S}$ has zero Lebesgue measure).  Moreover, $\iO |\vE|^2
|\vn|^2$ is constant at the continuous level, whence the extra term does not fundamentally change the energy.  However, it is needed to ensure the projection step in the algorithm decreases the (discrete) energy, which is guaranteed by \eqref{monotone_electric}.

%---------------------------------------------------------------------------
\subsection{Minimization scheme}\label{sec:elec_field_min_scheme}
%--------------------------------------------------------------------------

We apply the Algorithm of Section \ref{algorithm} to the energy \eqref{eqn:discrete energy_with_Eext}, except we need the following variational derivatives of $\Eext^h$.  The first order variation of $\Eext^h$ in the direction $\vv_h \in \Yh (\vn_h^k) \cap \Hbdy{\bdyvn}$ at the director variable $\vn_h^k$ reads
\begin{equation}\label{first_order_discrete_variation_dvN_electric}
\begin{split}
 \delta_{\vn_h} \Eext^h [s_h^k, \vn_h^k ; \vv_h] &= \Ecoef \ipelec(s_h^k, \vn_h^k, \vv_h).
%\Ecoef \iO I_h \left[ |\ea| |\vE|^2 (\vn_h^k \cdot \vv_h) - \ea s_h^k (\vE \cdot \vn_h^k) (\vE \cdot \vv_h) \right].
\end{split}
\end{equation}
The first order variation of $\Eext^h$ in the direction $z_h \in \Sh \cap \Hbdy{\bdys}$ at the degree of orientation variable $s_h^k$ is
\begin{equation}\label{first_order_discrete_variation_dS_electric}
\begin{split}
 \delta_{s_h} \Eext^h [s_h^k , \vn_h^{k} ; z_h] &= \frac{\Ecoef}{2} \left( \ebar \ga \iO |\vE|^2 z_h - \ea \iO I_h \left[ (\vE \cdot \vn_h^{k})^2 z_h \right] \right).
\end{split}
\end{equation}

%---------------------------------------------------------------------------
\subsection{Simulations}\label{sec:elec_field_sims}
%--------------------------------------------------------------------------

We present simulations of the classic Freedericksz transition and the effect of an electric field on the shape of the Saturn ring defect.

%---------------------------------------------------------------------------
\subsubsection{Freedericksz transition}\label{sec:Freedericksz}
%--------------------------------------------------------------------------

We consider a two dimensional cell with no colloids present, i.e. $\anchorcoef = 0$.  The domain is defined to be $\Om = [0, 1]^2 \subset \R^2$, with $\Gm_{D} = \{ 0, 1 \} \times [0, 1]$, and $\Gm_{N} = \dOm \setminus \overline{\Gm_{D}}$.  The boundary conditions are given by
\begin{equation}\label{eqn:Freed_BCs}
\begin{split}
  \vn &= (0,1)\tp, \quad s = s^*, ~ \text{ on } \Gm_{D}, \\
  (\vnu \cdot \nabla) \vn &= \vzero, \quad  (\vnu \cdot \nabla) s = 0, ~ \text{ on } \Gm_{N},
\end{split}
\end{equation}
where $\vnu$ is the outer normal vector of $\dOm$.  Moreover, the
double well potential is defined in \eqref{eqn:double_well_defn}.  The
initial conditions in $\Om$ for the gradient flow are:
\[
s = s^*,
\qquad
\vn = (10^{-2},1)\tp / |(10^{-2},1)|.
\]
They are chosen to perturb the minimizing pair $\vn = (0,1)\tp$,
$s = s^*$ without the electric field.

The equilibrium solution, for $\kappa = 1.0$, is shown in Figure \ref{fig:Electric_Field_2D_all}, with electric field parameters as follows: $\Ecoef = 16.0$, $\vE = (1, 0)\tp$, $\ebar = 1.0$, $\ea = 2.0$, $\ga = 0.5$.  The director field deflects toward the right to better align with the imposed electric field $\vE$, which is the expected response known as the Freedericksz transition.  In this case, $0.6995 \leq s \leq 0.7757$ so the role of $s$ is not so critical because there is no defect region.
\begin{figure}%[h!]
\begin{center}
\subfloat{

\includegraphics[width=2.5in]{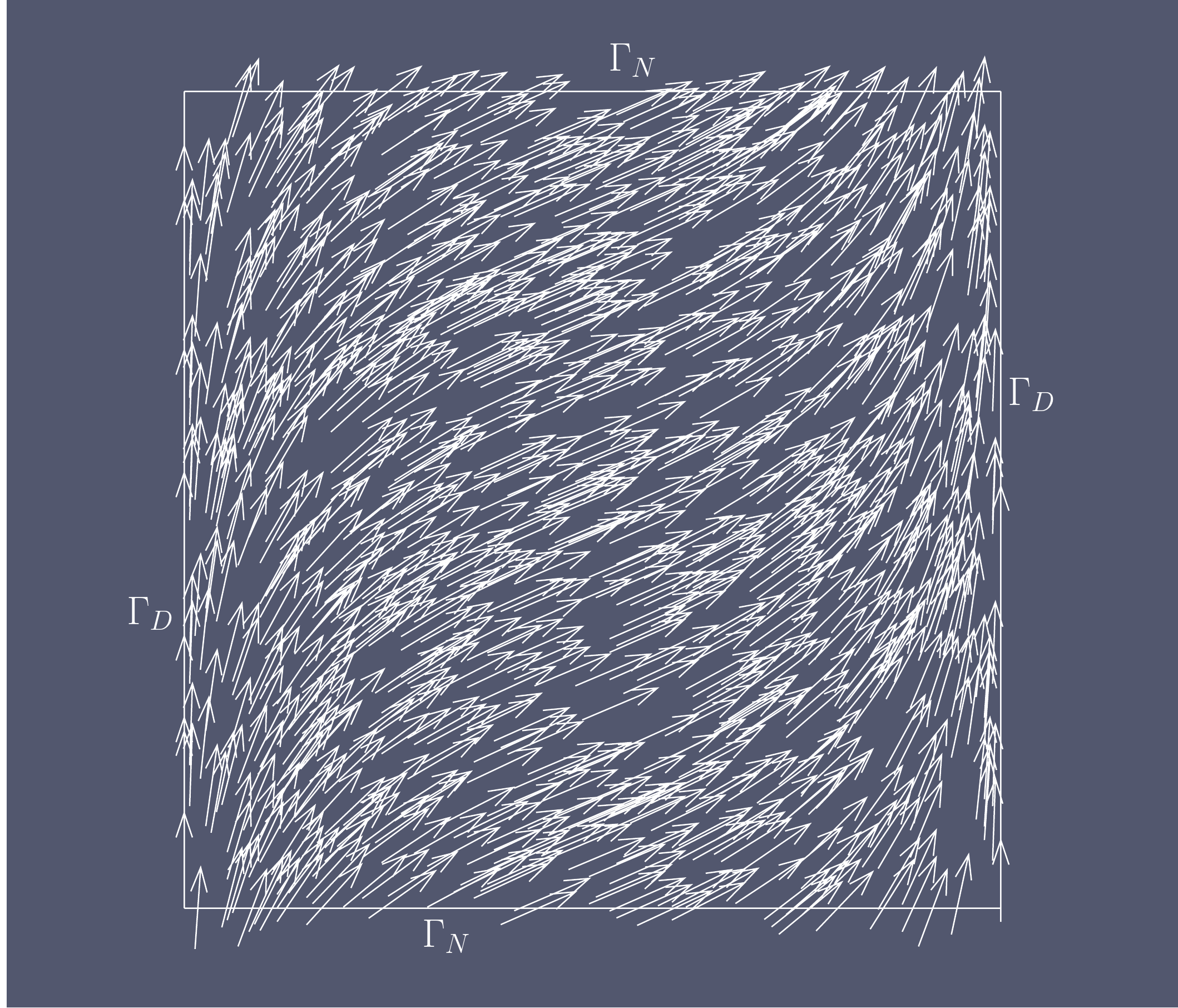}
%\caption{test.}
%\label{fig:Prism_Sphere_Hole_kappa_0_1_view_A}
}
\hspace{0.1in}
\subfloat{

\includegraphics[width=2.5in]{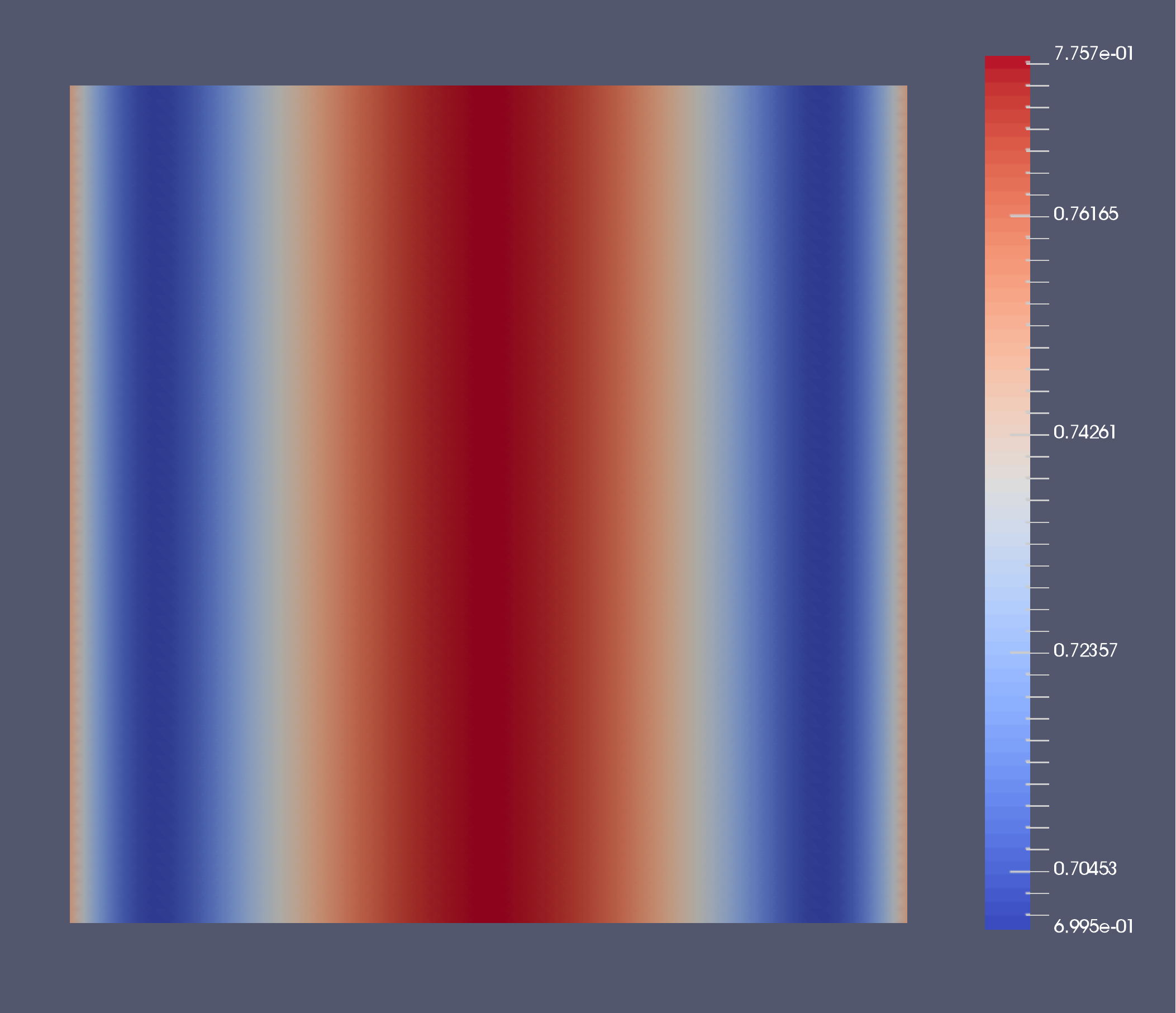}
%\caption{test.}
%\label{fig:Prism_Sphere_Hole_kappa_0_1_view_B}
}
\caption{Simulation results for the Freedericksz transition (Section \ref{sec:Freedericksz}) with Dirichlet boundary conditions on the left and right of $\Om$. The director field is shown (white arrows) at equilibrium with the given electric field $\vE = (1,0)\tp$.  The degree-of-orientation is shown on the right.}
\label{fig:Electric_Field_2D_all}
\end{center}
\end{figure}

%---------------------------------------------------------------------------
\subsubsection{Saturn ring interaction with an electric field}\label{sec:saturn_ring_elec_field}
%--------------------------------------------------------------------------

We consider the interaction of an electric field with a colloidal particle in three dimensions.  The domain is defined to be $\Om = [0, 1]^3 \subset \R^3$, with $\Gm_{D} = \dOm$.  The placement of the colloidal sphere and the boundary conditions are the same as in Section \ref{sec:defects-weak-anchoring} corresponding to Figure \ref{fig:Saturn_Ring_Phase_Field_all}, i.e. recall the description in Figure \ref{fig:LC_Sphere_Inclusion_BCs_B}. The double well potential is defined in \eqref{eqn:double_well_defn}.  The weak anchoring parameters are the same as in Section \ref{sec:defects-weak-anchoring}.  The electric field parameters are given as follows: $\Ecoef = 160.0$, $\vE = (0, 1, 0)\tp$, $\ebar = 1.0$, $\ea = 2.0$, $\ga = 0.5$.

The equilibrium solution, for $\kappa = 1.0$, is shown in Figure \ref{fig:Saturn_Ring_Electric_Field_all}.  The Saturn ring defect changes significantly (i.e. breaks into four pieces) because $\Ecoef$ is so large.  Note that the solution without the electric field is given in Figure \ref{fig:Saturn_Ring_Phase_Field_all}.
\begin{figure}%[h!]
\begin{center}
\subfloat{

\includegraphics[width=2.5in]{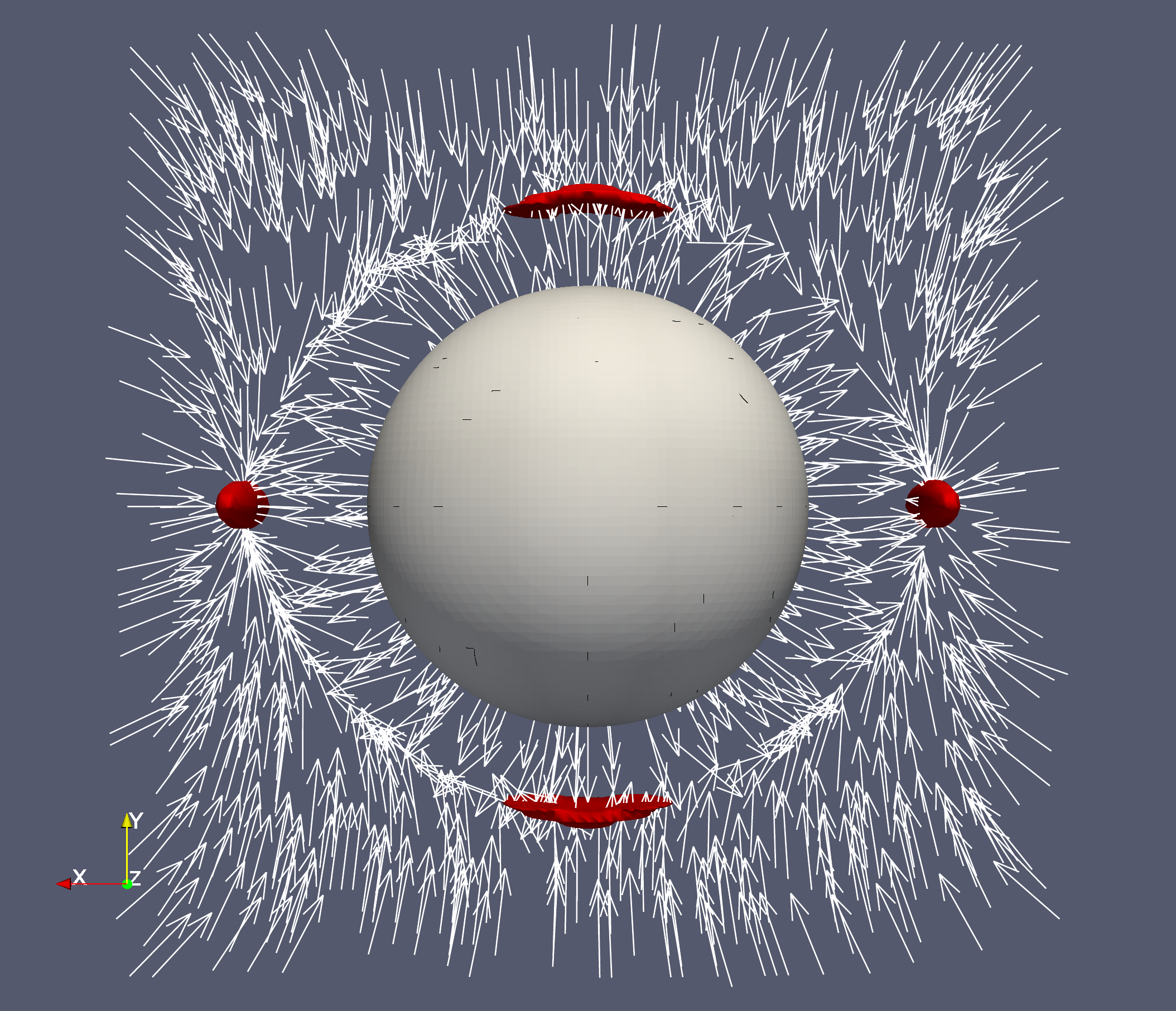}
%\caption{test.}
%\label{fig:Prism_Sphere_Hole_kappa_0_1_view_A}
}
\hspace{0.1in}
\subfloat{

\includegraphics[width=2.5in]{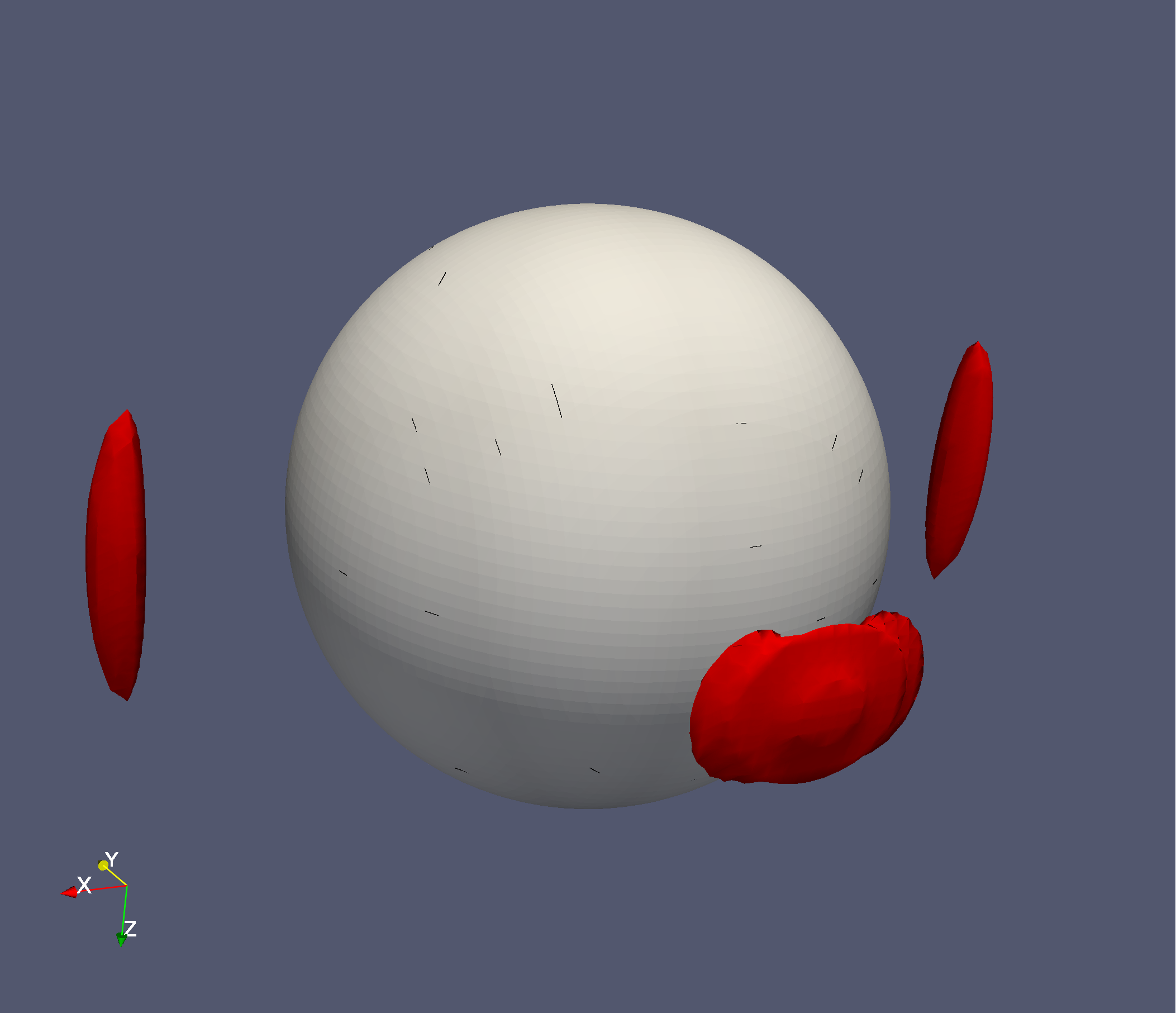}
%\caption{test.}
%\label{fig:Prism_Sphere_Hole_kappa_0_1_view_B}
}
\caption{Simulation results for deforming the Saturn ring with an electric field (Section \ref{sec:saturn_ring_elec_field}).  The director field is shown (white arrows) at equilibrium with the given electric field $\vE = (0,1,0)\tp$.  The $s=0.15$ iso-surface is shown in red.  The ring defect changes into two sliver-like defects and two disk-like defects.  The location of the sliver (disk) defect is at a radius of $\approx 0.38$ ($\approx 0.36$); the spherical colloid has a radius of $0.25$.}
\label{fig:Saturn_Ring_Electric_Field_all}
\end{center}
\end{figure}

%---------------------------------------------------------------------------
\section{Discrete quasi-gradient flow algorithm}\label{sec:general_scheme}
%--------------------------------------------------------------------------

We demonstrate that our minimization scheme in Section \ref{algorithm}
monotonically decreases the following discrete total energy
\begin{equation}\label{discrete_energy_total}
  E^h[s_h, \vn_h] := E_1^h [s_h, \vn_h] + E_2^h [s_h] + \Ea^h[s_h, \vn_h] + \Eext^h [s_h, \vn_h ],
\end{equation}
which includes the colloidal and electric field effects.

%---------------------------------------------------------------------------
\subsection{Discrete minimizing movements}\label{sec:min_movements}
%--------------------------------------------------------------------------

The method of minimizing movements \cite[pg. 32]{DeGiorgi_collection2006} is rather convenient for
ensuring energy decrease even for non-strictly convex energies.
Note that this is the case of $\vn_h\mapsto E^h[s_h,\vn_h]$ for fixed
$s_h$ when the latter vanishes at one or more nodes as well as the
energies \eqref{energy-with-phi} which are degenerate because of the
presence of the phase variable $\phi$.
This approach could be applied to more general non-convex energies
as well.

We present the idea for an abstract energy $E:\mathcal{H}\to\R$ where $\mathcal{H}$ is a
Hilbert space with norm $\|\cdot\|$ and inner product $\langle\cdot,\cdot\rangle.$
We construct a sequence of iterates $\{u^k\}_{n=0}^\infty \subset \mathcal{H}$
as follows: choose $u^0 \in \mathcal{H}$ arbitrarily and consider
minimizing the augmented functional
\[
F[u] := E[u] + \rho \|u-u^k\|^2
\]
for $\rho$ sufficiently large so that $F$ is {\it strictly convex}. Whether
this is possible depends on the specific structure of $E$, but note
that any $\rho>0$ would work for our two examples in this paper.
Let $u^{k+1}\in\mathcal{H}$ be the unique minimizer of $F$. Then
\begin{equation}\label{energy_decrease}
E[u^{k+1}] < F[u^{k+1}] = E[u^k] + \rho \|u^{k+1}-u^k\|^2
\le F[u^k] = E[u^k],
\end{equation}
provided $u^{k+1}\ne u^k$. This means that we achieve strict energy
decrease unless we reach a stationary point. Convergence of $u^k$ to a
local minimizer of $E$ is a delicate matter and within the context of
$\Gamma$-convergence. We elaborate on both energy decrease and
convergence for our concrete functionals below.

%---------------------------------------------------------------------------
\subsection{Energy decreasing property}\label{sec:energy_property}
%--------------------------------------------------------------------------

We now capitalize on the preceding calculations to show the following
key result, which extends Theorem \ref{energydecreasing} proved
in \cite{Nochetto_SJNA2017}.
Given $(s_h^k,\vn_h^k)\in\Sh\times\Nh$, we modify the
discrete total energy of \eqref{discrete_energy_total} as follows
for any $\rho \ge 0$:
\begin{equation}\label{modified_energy}
F^h[s_h,\vn_h] := E^h[s_h,\vn_h] + \rho \|\vn_h-\vn_h^k\|_{L^2(\Omega)}^2.
\end{equation}
\begin{thm}[energy decrease]\label{thm:energydecreasing_general}
Let $\Tk_h$ satisfy \eqref{weakly-acute}.
Given $(s_h^k,\vn_h^k)\in\Sh\times\Nh$,
the iterate $(s_h^{k+1}, \vn_h^{k+1})\in\Sh\times\Nh$
of the Algorithm of Section \ref{algorithm}
for \eqref{modified_energy} exists and satisfies
\begin{equation*}
  E^h [s_h^{k+1} ,\vn_h^{k+1}] \leq E^h [s_h^k ,\vn_h^k ] - \frac{1}{\delta t} \int_{\Omega} (s_h^{k+1} - s_h^k)^2 dx.
\end{equation*}
Equality holds if and only if $(s_h^{k+1}, \vn_h^{k+1}) = (s_h^k,\vn_h^k)$ (equilibrium state).
\end{thm}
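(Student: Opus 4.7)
The plan is to prove non-increase of $E^h$ along each of the three substeps (a), (b), (c) of the Algorithm when applied to the modified functional $F^h$ in \eqref{modified_energy}, then concatenate the resulting inequalities. This extends the pattern of Theorem \ref{energydecreasing} from \cite{Nochetto_SJNA2017}, so the new content will consist in controlling the additional terms $\Ea^h$ and $\Eext^h$ across the projection step (b) and verifying their monotonicity with respect to $s_h$ in step (c).

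For step (a), I would first verify that, for the two concrete choices of $\Ea^h$ in this paper, $\vn_h \mapsto E^h[s_h^k, \vn_h]$ is already convex on the affine space $\vn_h^k + \Yh(\vn_h^k)\cap\Hbdy{\bdyvn}$, so that any $\rho > 0$ makes $F^h[s_h^k, \cdot]$ strictly convex and coercive there. This yields existence and uniqueness of the minimizer $\vn_h^k + \vt_h^k$, and the bound $F^h[s_h^k, \vn_h^k + \vt_h^k] \le F^h[s_h^k, \vn_h^k] = E^h[s_h^k, \vn_h^k]$ then produces, after subtracting the non-negative penalty $\rho\|\vt_h^k\|_{L^2(\Omega)}^2$, the inequality $E^h[s_h^k, \vn_h^k + \vt_h^k] \le E^h[s_h^k, \vn_h^k]$, with strict inequality unless $\vt_h^k \equiv 0$.

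For step (b), I would start from the tangentiality $\vn_i^k \cdot \vt_i^k = 0$ at nodes to conclude $|\vn_i^k + \vt_i^k|^2 = 1 + |\vt_i^k|^2 \ge 1$, so the projection step only shrinks nodal magnitudes (and is well defined). Then I would handle each energy component separately: $E_1^h$ via the nodewise inequality $\bigl|\va/|\va| - \vb/|\vb|\bigr|^2 \le |\va - \vb|^2$ valid whenever $|\va|, |\vb| \ge 1$, combined with $k_{ij} \ge 0$, exactly as in \cite{Nochetto_SJNA2017}; the anchoring energy $\Ea^h$ via \eqref{monotone_anchoring} (Lemma \ref{lem:monotone_lumped_mass} applied to the kernel $H = s_h^2(|\nabla\phi|^2\vI - \nabla\phi\otimes\nabla\phi) \succeq 0$) for weak anchoring, or via \eqref{monotone-dirichlet-penalty} (Lemma \ref{lem:monotone_penal_dirichlet}) for penalized Dirichlet; and $\Eext^h$ via \eqref{monotone_electric} (Lemma \ref{lem:monotone_lumped_mass} with $H = |\ea||\vE|^2\vI - \ea s_h\vE\otimes\vE \succeq 0$). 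Summing yields $E^h[s_h^k, \vn_h^{k+1}] \le E^h[s_h^k, \vn_h^k + \vt_h^k]$. I expect this to be the main obstacle: the additional terms were engineered precisely so that these algebraic lemmas apply after lumped-mass quadrature \eqref{mass-lumping}, and one has to check the hypotheses of each lemma carefully for every variant of $\Ea^h$.

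For step (c), I would test the implicit update with $z_h = s_h^{k+1} - s_h^k$ to obtain $\tfrac{1}{\delta t}\|s_h^{k+1} - s_h^k\|_{L^2(\Omega)}^2$ on the left, and bound the right-hand side by: the convex splitting inequality \eqref{convex_split_inequality} for $E_2^h$; convexity of the quadratic $s_h \mapsto E_1^h[s_h, \vn_h^{k+1}]$; convexity of the quadratic $s_h \mapsto \Ea^h[s_h, \vn_h^{k+1}]$ (resting on $|\vn_h|^2|\nabla\phi|^2 - (\nabla\phi \cdot \vn_h)^2 \ge 0$ for weak anchoring, and on $\bigl| |\nabla\phi|\vn_h - \nabla\phi \bigr|^2 \ge 0$ for penalized Dirichlet); and the linearity in $s_h$ of $s_h \mapsto \Eext^h[s_h, \vn_h^{k+1}]$. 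Well-posedness of this step is standard since the resulting linear system has a symmetric positive definite matrix. Chaining the three bounds gives the claimed estimate, and equality propagates through all three inequalities only when $\vt_h^k \equiv 0$, $\vn_h^{k+1} = \vn_h^k$, and $s_h^{k+1} = s_h^k$ simultaneously, which is exactly the equilibrium condition.
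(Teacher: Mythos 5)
Your proposal is correct and follows essentially the same route as the paper's proof: step (a) via the minimizing-movement/convexity argument for $F^h$, step (b) via the $1$-Lipschitz normalization on $\{|\va|\ge 1\}$ for $E_1^h$ together with the lumped-mass monotonicity properties \eqref{monotone_anchoring}, \eqref{monotone-dirichlet-penalty}, \eqref{monotone_electric} for $\Ea^h$ and $\Eext^h$, and step (c) by testing with $z_h = s_h^{k+1}-s_h^k$ and combining the quadratic/affine structure in $s_h$ with the convex-splitting inequality \eqref{convex_split_inequality}. The only cosmetic difference is that you argue strict convexity for $\rho>0$, while the paper states the minimization step for any $\rho\ge 0$ using plain convexity; this does not affect the substance of the argument.
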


\begin{proof}
Steps (a) and (b) show monotonicity, whereas Step (c) proves strict decrease of the energy.

{\it Step (a): Minimization.}
Since $F^h[s_h^k, \vn_h]$ is convex in $\vn_h$ for fixed
  $s_h^k$, there exists a tangential variation $\vt_h^k$ which
  minimizes $F^h[s_h^k, \vn_h^k + \vv_h]$ amongst all tangential
variations $\vv_h$. Invoking \eqref{energy_decrease} we deduce
\begin{equation*}
  E^h [s_h^k , \vn_h^k + \vt_h^k] \le E^h [s_h^k , \vn_h^k].
\end{equation*}

{\it Step (b): Projection.} Since the mesh $\Tk_h$ is weakly acute, we claim that
\begin{align*}
  \vn_h^{k+1}=\frac{\vn_h^k + \vt_h^k}{|\vn_h^k + \vt_h^k|}
  \quad\Rightarrow\quad
  E^h[s_h^k , \vn_h^{k+1}] \leq E^h[s_h^k , \vn_h^k + \vt_h^k].
\end{align*}
First we show that
\begin{align*}
  \vn_h^{k+1}=\frac{\vn_h^k + \vt_h^k}{|\vn_h^k + \vt_h^k|}
  \quad\Rightarrow\quad
  E_1^h \big[s_h^k , \vn_h^{k+1} \big] \leq E_1^h \big[s_h^k , \vn_h^k + \vt_h^k \big].
\end{align*}
Following \cite{Alouges_SJNA1997, Bartels_SJNA2006}, let $\vv_h = \vn_h^k + \vt_h^k$, $\vw_h = \frac{\vv_h}{|\vv_h|}$,
and observe that $|\vv_h|\ge1$ (at the nodes) and $\vw_h$ is well-defined.
By \eqref{discrete_energy_E1} (definition of the discrete energy), we only need to show that
\begin{align*}
  k_{ij} \frac{(s_i^k)^2 + (s_j^k)^2 }{2} |\vw_h(x_i) - \vw_h(x_j)|^2 \leq  k_{ij} \frac{(s_i^k)^2 + (s_j^k)^2 }{2} |\vv_h(x_i) - \vv_h(x_j)|^2 .
\end{align*}
for all $x_i,x_j\in \Nk_h$. Because $k_{ij}\ge0$ for $i\ne j$, this is equivalent to showing that
$ |\vw_h(x_i) - \vw_h(x_j)| \leq |\vv_h(x_i) - \vv_h(x_j)|$. This
follows from the fact that the mapping $\va \mapsto \va/|\va|$ defined
on $\{\va \in \mathbb R^d: |\va| \geq 1 \}$
is Lipschitz continuous with constant $1$.
Note that equality above holds if and only if $\vn_h^{k+1} = \vn_h^{k}$
or equivalently $\vt_h^k = \vzero$.

Next, from \eqref{monotone_anchoring} and \eqref{eqn:colloid_weak_anchoring_energy_discrete}, we get
\begin{equation*}
  \Ea^h[s_h^k , \vn_h^{k+1}] \le \Ea^h[s_h^k , \vn_h^k + \vt_h^k].
\end{equation*}
Moreover, using \eqref{monotone_electric} and \eqref{eqn:electric_energy_discrete}, we also get
\begin{equation*}
  \Eext^h[s_h^k , \vn_h^{k+1}] \le \Eext^h[s_h^k , \vn_h^k + \vt_h^k].
\end{equation*}
Therefore, we find that
\begin{equation*}
  E^h[s_h^k , \vn_h^{k+1}] \le E^h[s_h^k , \vn_h^k + \vt_h^k].
\end{equation*}

{\it Step (c): Gradient flow.}
Since $E_1^h$ is quadratic in terms of $s_h^k$, and
\[
2 s_h^{k+1} \big(s_h^{k+1} - s_h^k \big)
= \big(s_h^{k+1} - s_h^k \big)^2
+ \big|s_h^{k+1}\big|^2 - \big|s_h^k\big|^2,
\]
reordering terms gives
\begin{align*}
 E_1^h [ s_h^{k+1} ,\vn_h^{k+1} ] - E_1^h [ s_h^k ,\vn_h^{k+1} ]
= &\;
R_1
-  E^h_1 [ s_h^{k+1} - s_h^k ,\vn_h^{k+1}]
\leq R_1,
\end{align*}
where
\begin{equation*}
R_1 := \delta_{s_h} E^h_1 [ s_h^{k+1} ,\vn_h^{k+1} ; s_h^{k+1} - s_h^k].
\end{equation*}
Moreover, $\Ea^h$ is also quadratic in terms of $s_h^k$, so we get a similar inequality
\begin{equation*}
  \Ea^h [ s_h^{k+1} , \vn_h^{k+1} ] - \Ea^h [ s_h^k ,\vn_h^{k+1}  ] \leq R_{\mathrm{a}},
\end{equation*}
where
\begin{equation*}
R_{\mathrm{a}} := \delta_{s_h} \Ea^h [ s_h^{k+1} , \vn_h^{k+1} ; s_h^{k+1} - s_h^k].
\end{equation*}

Next, \eqref{convex_split_inequality} implies
\begin{align*}
E_2^h [ s_h^{k+1} ] - E_2^h [ s_h^k ]
=
\iO  \psi(s_h^{k+1}) dx  - \iO \psi(s_h^{k}) dx \leq R_2 := \delta_{s_h} E^h_2 [ s_h^{k+1} ; s_h^{k+1} - s_h^k],
\end{align*}
and accounting for the electric field gives
\begin{align*}
\Eext^h [ s_h^{k+1}, \vn_h^{k+1} ] - \Eext^h [ s_h^k, \vn_h^{k+1} ]
=
R_{\mathrm{ext}} := \delta_{s_h} \Eext^h [ s_h^{k+1} , \vn_h^{k+1}; s_h^{k+1} - s_h^k].
\end{align*}

Combining all estimates and invoking Step (c) of the Algorithm yields
\begin{align*}
  E^h [ s_h^{k+1} ,\vn_h^{k+1} ] - E^h [ s_h^k ,\vn_h^{k+1} ] \leq R_1 + R_2 + R_{\mathrm{a}} + R_{\mathrm{ext}}
  = - \frac {1}{\delta t} \int_{\Omega} (s_h^{k+1} - s_h^k)^2,
\end{align*}
which is the assertion.
Note finally that equality occurs if and only if $s_h^{k+1} = s_h^k$
and $\vn_h^{k+1} = \vn_h^k$, which corresponds to an equilibrium state.
This completes the proof.
\end{proof}

\begin{remark}\label{rem:convenient_use_min_move}

The choice $\rho>0$ in \eqref{modified_energy} ensures a positive
definite system to solve for $\vn_h^{k+1}$ no matter whether the system for $\rho=0$ is
singular or degenerate (see Remark \ref{rem:how_to_solve_degenerate_system}).

\end{remark}

%---------------------------------------------------------------------------
\section{$\Gamma$-Convergence of the Discrete Energy}\label{sec:Gamma_conv_results}
%--------------------------------------------------------------------------

We show that our discrete energy \eqref{discrete_energy_total} $\Gamma$-converges
to the continuous energy
\begin{equation}\label{energy_total}
  E[s, \vn] := E_1 [s, \vn] + E_2 [s] + \Ea[s, \vn] + \Eext [s, \vn ].
\end{equation}
This implies existence of global minimizers of
\eqref{energy_total}, and convergence of global minimizers of
\eqref{discrete_energy_total} to global minimizers of \eqref{energy_total}
along with convergence of discrete to continuous energies.

We recall the setting of our $\Gamma$-convergence result in
\cite{Nochetto_SJNA2017} and next extend it to the more general
energy \eqref{energy_total}. Let the continuous and discrete spaces be
\[
\X := L^2(\Omega) \times [L^2(\Omega)]^d,
\qquad
\X_h := \Sh \times \Uh.
\]
We define $E[s,\vn]$ as in \eqref{energy_total} for
$(s,\vu)\in\Admis(g,\vr)$ and $E[s,\vu]=\infty$ for
$(s,\vn)\in \X \setminus \Admis(g,\vr)$.
Likewise, we define $E^h [s_h ,\vn_h] $ as in
\eqref{discrete_energy_total} for $(s_h, \vu_h)\in \Admis_h (g_h, \vr_h)$
and $E^h [s, \vn] = \infty$ for all
$(s, \vu) \in \X \setminus \Admis_h (g_h, \vr_h)$.
We state the two properties of $\Gamma$-convergence for $E_1[s,\vn]$
\cite{Nochetto_SJNA2017}.

\begin{thm}[$\Gamma$-convergence]
  \label{Gamma_convergence}
 Let $\{ \Tk_h \}$ be a sequence of weakly acute meshes.
 Then, for every $(s, \vn) \in \X$ the following two properties hold:
 \begin{itemize}
  \item Lim-inf inequality: for every sequence $\{ (s_h, \vn_h) \}$ converging strongly to $(s, \vn)$ in $\X$, we have
    \begin{align}\label{liminf}
      E_1 [s, \vn] \leq \liminf_{h \rightarrow 0} E_1^h[s_h, \vn_h];
    \end{align}
  \item Lim-sup inequality: there exists a sequence $\{ (s_h, \vn_h) \}$ such that $(s_h, \vn_h)$ converges strongly to $(s, \vn)$ in $\X$ and
    \begin{align}
      \label{limsup}
      E_1 [s, \vn] \geq \limsup_{h\rightarrow 0} E_1^h [s_h, \vn_h].
    \end{align}
\end{itemize}
\end{thm}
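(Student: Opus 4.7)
The plan is to establish the two inequalities separately, using the discrete energy inequality of Lemma \ref{lemma:energydecreasing} as the key structural device at the discrete level, since it mirrors the continuous identity \eqref{auxiliary_energy_identity} $E_1[s,\vn]=(\kappa-1)\|\nabla s\|_{L^2}^2+\|\nabla\vu\|_{L^2}^2$ with $\vu=s\vn$. Throughout I work in $\X=L^2(\Omega)\times[L^2(\Omega)]^d$ with the extended-real convention that $E_1^h=+\infty$ off $\Admis_h(g_h,\vr_h)$ and $E_1=+\infty$ off $\Admis(g,\vr)$.

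For the lim-inf, let $(s_h,\vn_h)\to(s,\vn)$ strongly in $\X$ and assume, passing to a subsequence, that $\liminf_h E_1^h[s_h,\vn_h]=:L<\infty$; otherwise there is nothing to prove. Applying \eqref{abs_inequality} to $\widetilde s_h=I_h|s_h|$ and $\widetilde\vu_h=I_h[\widetilde s_h\vn_h]$ yields a uniform bound on $(\kappa-1)\|\nabla\widetilde s_h\|_{L^2}^2+\|\nabla\widetilde\vu_h\|_{L^2}^2$. When $\kappa\ge 1$ this gives uniform $H^1$-bounds on both $\widetilde s_h$ and $\widetilde\vu_h$ directly. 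For $0<\kappa<1$ I would exploit the nodal identity $|\widetilde\vu_h(x_i)|=\widetilde s_h(x_i)$ (since $|\vn_h(x_i)|=1$) together with the discrete chain rule $\|\nabla\widetilde s_h\|_{L^2}\lesssim\|\nabla\widetilde\vu_h\|_{L^2}$ (which holds up to $O(h)$ consistency terms because the map $\va\mapsto|\va|$ is $1$-Lipschitz) to absorb the negative $(\kappa-1)$-term into $\|\nabla\widetilde\vu_h\|_{L^2}^2$ and again recover uniform $H^1$-bounds. By Rellich, along a subsequence, $\widetilde s_h\rightharpoonup\widetilde s$ in $H^1$ and $\widetilde\vu_h\rightharpoonup\widetilde\vu$ in $[H^1]^d$; strong $L^2$ convergence of $(s_h,\vn_h)\to(s,\vn)$ forces $\widetilde s=|s|$ and $\widetilde\vu=|s|\vn'$ with $\vn'=\mathrm{sgn}(s)\vn$ on $\{s\ne 0\}$, so $(|s|,\vn')\in\Admis(g,\vr)$ (the boundary trace is preserved because of \eqref{gne0}). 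Weak lower semicontinuity of $\|\nabla\cdot\|_{L^2}^2$ combined with \eqref{auxiliary_energy_identity} then yields $E_1[s,\vn]=E_1[|s|,\vn']\le L$.

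For the lim-sup I would build the recovery sequence in two stages. First, suppose $(s,\vn)$ is smooth with $s\ge\delta>0$ and $|\vn|=1$; set $s_h:=I_h s$ and define $\vn_h$ nodally by $\vn_h(x_i):=\vn(x_i)\in\mathbb{S}^{d-1}$. Standard interpolation gives $s_h\to s$ in $H^1$, so the $\kappa$-term converges via \eqref{eqn:dirichlet_integral_identity}: $\frac{\kappa}{2}\sum k_{ij}(\delta_{ij}s_h)^2=\kappa\|\nabla I_hs\|_{L^2}^2\to\kappa\|\nabla s\|_{L^2}^2$. For the second (weighted) sum, the edge-averaged weight $\frac{s_h(x_i)^2+s_h(x_j)^2}{2}$ differs from $I_h(s^2)$ on each edge by $O(h^2)$, so a term-by-term expansion combined with the uniform bound $|\delta_{ij}\vn_h|\lesssim h\|\nabla\vn\|_{L^\infty}$ yields the consistency
\begin{equation*}
\tfrac12\sum_{i,j}k_{ij}\tfrac{s_h(x_i)^2+s_h(x_j)^2}{2}|\delta_{ij}\vn_h|^2\longrightarrow\int_\Omega s^2|\nabla\vn|^2\,dx.
\end{equation*}
For a general admissible pair $(s,\vn)$ I would regularize at the level of $\vu=s\vn$ rather than $\vn$ directly: construct smooth $\vu^\varepsilon$ and $s^\varepsilon\ge\varepsilon$ such that $\vu^\varepsilon\to\vu$ in $H^1$, $s^\varepsilon\to s$ in $H^1$, with boundary data preserved thanks to \eqref{gne0}, and set $\vn^\varepsilon=\vu^\varepsilon/s^\varepsilon$. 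Continuity of the continuous energy along this regularization (again via \eqref{auxiliary_energy_identity}) together with the smooth-data recovery yields the sought sequence by a diagonal extraction $h=h(\varepsilon)\to 0$.

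The main obstacle is the recovery construction in the low-$\kappa$ regime $\kappa<1$ combined with a nontrivial singular set $\Sing=\{s=0\}$: here $\vn$ need not be $H^1$ across $\Sing$, yet the regularized pairs $(s^\varepsilon,\vn^\varepsilon)$ must have energies converging to $E_1[s,\vn]$ while leaving the traces on $\bdys\cup\bdyvn$ intact. The guiding principle is that $\vu=s\vn\in[H^1(\Omega)]^d$ even where $\vn$ is singular, so regularizing $\vu$ and then factoring $\vn^\varepsilon=\vu^\varepsilon/s^\varepsilon$ is the correct device; the degenerate weight $s^2$ in front of $|\nabla\vn|^2$ is precisely what allows $|\nabla\vn^\varepsilon|^2$ to blow up in a controlled way where $s^\varepsilon$ is pushed to zero. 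A secondary technical point is the edge-by-edge consistency estimate for the weighted stiffness sum in the smooth-data step, which must use the weak-acuteness condition $k_{ij}\ge 0$ ($i\ne j$) to control sign-indefinite remainders coming from the difference between the averaged weight and $I_h(s^2)$.
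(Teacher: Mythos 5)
Your overall architecture matches the paper's outline (the paper itself only sketches this theorem and defers to \cite{Nochetto_SJNA2017}): lim-inf via the discrete inequality \eqref{abs_inequality} plus compactness and lower semicontinuity of $\widetilde E_1^h$, lim-sup via a regularization of admissible pairs followed by nodal interpolation. However, your lim-inf argument has a genuine gap precisely at the point the paper flags as the key difficulty. After extracting weakly convergent subsequences $\widetilde{s}_h \rightharpoonup \widetilde{s}$, $\widetilde{\vu}_h \rightharpoonup \widetilde{\vu}$ in $H^1$, you conclude by invoking ``weak lower semicontinuity of $\|\nabla\cdot\|_{L^2}^2$.'' That suffices only when $\kappa \ge 1$. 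In the physically relevant regime $0<\kappa<1$ the functional $\widetilde E_1[\widetilde{s},\widetilde{\vu}] = (\kappa-1)\int_\Om|\nabla\widetilde{s}|^2 + \int_\Om|\nabla\widetilde{\vu}|^2$ contains a \emph{negative} coefficient on the first term, so the liminf of $(\kappa-1)\int_\Om|\nabla\widetilde{s}_h|^2$ goes the wrong way and the two terms cannot be treated separately; the functional is not convex in $(\nabla\widetilde{s},\nabla\widetilde{\vu})$ and its weak lower semicontinuity along discrete admissible sequences requires the structure $\widetilde{s}_h = I_h|\widetilde{\vu}_h|$ and a dedicated argument (this is Lemma 3.4 of \cite{Nochetto_SJNA2017}, which the paper explicitly identifies as a key contribution). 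Your handling of $0<\kappa<1$ addresses only coercivity (where the Lipschitz bound $|\dij \widetilde{s}_h| \le |\dij\widetilde{\vu}_h|$ together with $k_{ij}\ge 0$ indeed gives $\int_\Om|\nabla\widetilde{s}_h|^2 \le \int_\Om|\nabla\widetilde{\vu}_h|^2$ exactly, not merely up to $O(h)$), but coercivity does not supply the missing semicontinuity. You also assert, rather than prove, the structure of the limit ($\widetilde{\vu} = \widetilde{s}\,\vn$ off $\Sing$ with $|\vn|=1$ and a.e. convergence of $\vn_h$ there), which is the content of the characterization-of-limits lemma the paper cites.

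For the lim-sup, your recovery for smooth data with $s\ge\delta>0$ is fine, but the reduction of a general admissible pair to that case is exactly the ``rather delicate regularization'' the paper defers to \cite{Nochetto_SJNA2017}, and your proposed device does not deliver it as stated: if you mollify $\vu$ and independently lift $s^\varepsilon \ge \varepsilon$, then $\vn^\varepsilon := \vu^\varepsilon/s^\varepsilon$ is \emph{not} unit length (near $\Sing$ one has $|\vu^\varepsilon|\approx |s| \ll \varepsilon \le s^\varepsilon$), so the pair $(s^\varepsilon,\vu^\varepsilon)$ violates the structure condition $\vu^\varepsilon = s^\varepsilon\vn^\varepsilon$ with $|\vn^\varepsilon|=1$ required both for membership in $\Admis(g,\vr)$ and for the identity \eqref{auxiliary_energy_identity} on which your ``continuity of the energy along the regularization'' rests; normalizing $\vn^\varepsilon$ instead can blow up $\int_\Om (s^\varepsilon)^2|\nabla\vn^\varepsilon|^2$ near $\Sing$. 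A correct regularization must simultaneously preserve $|\vu^\varepsilon|=s^\varepsilon$ away from the singular set, keep the Dirichlet data on $\bdys,\bdyvu$ intact, and control the energy increment; you name this as the main obstacle but do not resolve it, whereas it is the substance of the consistency half of the theorem (it is what makes the residuals $\mathcal{E}_h,\widetilde{\mathcal{E}}_h$ in \eqref{eqn:residual} vanish despite $\vn\notin [H^1(\Om)]^d$).
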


We refer to \cite{Nochetto_SJNA2017} for a complete proof of this
rather technical theorem. We now give a brief outline.
The lim-sup inequality is a \emph{consistency} estimate
in the usual numerical analysis sense.
It reduces to showing that $\mathcal{E}_h,\widetilde{\mathcal{E}}_h
\rightarrow 0$ as $h \rightarrow 0$, in \eqref{eqn:energyequality} and
\eqref{eqn:residual}.  If $\vn\in[H^1(\Omega)]^d$, then the residual term
\eqref{eqn:residual} would be of order $h^2 \int_{\Om} | \nabla s_h |^2 dx$
which obviously converges to zero. The presence of defects entails
lack of $[H^1(\Omega)]^d$ regularity of $\vn$, whence
this heuristic argument fails. A rigorous proof involves a rather
delicate regularization argument of any pair $(s,\vu) \in \Admis(g,\vr)$
which preserves Dirichlet boundary values and the structure condition
$\vu=s\vn$ for some $\vn$ of unit norm away from the singular set $\Sing$.

Proving the lim-inf is more technical.
It follows from \eqref{abs_inequality}, which also reads
\[
E_1^h[s_h,\vn_h] \ge (\kappa-1) \int_\Omega|\nabla I_h|\widetilde{\vu}_h||^2 d\vx
+ \int_\Omega |\nabla \widetilde{\vu}_h|^2 d\vx
= \widetilde{E}^h_1[\widetilde{s}_h,\widetilde{\vu}_h],
\]
and the fact that $\widetilde{E}^h_1[\widetilde{s}_h,\widetilde{\vu}_h]$
is weakly lower semi-continuous
\cite[Lemma 3.4 (weak lower semicontinuity)]{Nochetto_SJNA2017}
This usually follows from convexity (with
respect to $\nabla \widetilde\vu_h$), but this is \textbf{not} obvious
when $0 < \kappa < 1$ and is a key contribution of \cite{Nochetto_SJNA2017}.

$\Gamma$-convergence combined with a coercivity property
yields that global minimizers of the discrete problem converge to
global minimizers of the continuous problem \cite{Braides_book2014, DalMaso_book1993}.
We explicitly show this property in Theorem \ref{thm:converge_numerical_soln}.
However, $\Gamma$-convergence does not yield rates of convergence.
In Section \ref{sec:eoc}, we provide some experimental rates of convergence.

\begin{thm}[convergence of global discrete minimizers]\label{thm:converge_numerical_soln}
Let $\{ \Tk_h \}$ satisfy \eqref{weakly-acute} and assume $\Ea$, $\Ea^h$ are given by
\eqref{eqn:colloid_weak_anchoring_energy}, \eqref{eqn:colloid_weak_anchoring_energy_discrete}.
If $(s_h,\vu_h) \in \Admis_h (g_h,\vr_h)$
is a sequence of global minimizers of $E^h[s_h,\vn_h]$ in \eqref{discrete_energy_total}, then
every cluster point is a global minimizer of the continuous energy
$E[s, \vn]$ in \eqref{energy_total}.
\end{thm}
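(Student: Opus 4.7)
The plan is to combine Theorem \ref{Gamma_convergence} for the principal part $E_1^h$ with a compactness argument, treating the additional terms $E_2^h, \Ea^h, \Eext^h$ as continuous perturbations. Let $\{(s_h,\vu_h)\}\subset\Admis_h(g_h,\vr_h)$ be the sequence of global discrete minimizers and let $(s,\vu)$ be any cluster point in $\X$. First I would establish precompactness: pick a smooth admissible competitor $(s^*,\vn^*)\in\Admis(g,\vr)$ with $E[s^*,\vn^*]<\infty$ and produce a recovery sequence $(s_h^*,\vn_h^*)$ as in Theorem \ref{Gamma_convergence}; minimality gives the uniform upper bound $E^h[s_h,\vu_h]\le E^h[s_h^*,\vn_h^*]\le C$. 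The auxiliary inequality \eqref{abs_inequality} yields a uniform $H^1$ bound on $\widetilde{s}_h$ and $\widetilde{\vu}_h$ (the case $0<\kappa<1$ is handled by the rearrangement used in \cite{Nochetto_SJNA2017}), which together with the $L^\infty$ bounds and Rellich compactness gives, along a subsequence, $(s_h,\vu_h)\to(s,\vu)$ strongly in $\X$ and a.e.; the limit is shown to lie in $\Admis(g,\vr)$ exactly as in \cite{Nochetto_SJNA2017}.

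Second, I would establish the lim-inf inequality $E[s,\vn]\le\liminf_{h\to0}E^h[s_h,\vu_h]$ for the full energy. The principal part is Theorem \ref{Gamma_convergence}. For $E_2^h$, the uniform bound on $E_2^h[s_h]$ keeps $s_h$ bounded away from the singularities of $\psi$, so dominated convergence along the a.e. subsequence gives $E_2^h[s_h]\to E_2[s]$. For $\Ea^h$, the key observation is that the nodal identity $s_h(x_i)\vn_h(x_i)=\vu_h(x_i)$ rewrites the mass-lumped integrand as
\[
s_h(x_i)^2\bigl[|\vn_h(x_i)|^2|\nabla\phi(x_i)|^2-(\vn_h(x_i)\cdot\nabla\phi(x_i))^2\bigr]=|\vu_h(x_i)|^2|\nabla\phi(x_i)|^2-(\vu_h(x_i)\cdot\nabla\phi(x_i))^2,
\]
which is a bounded continuous functional of $\vu_h$ alone; the mass-lumping error is $O(h^2)$ on piecewise polynomials with controlled seminorms. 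Convergence of the $(s_h-s^*)^2$ part of $\Ea^h$ follows from $L^2$ convergence of $s_h$.

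Third, I would prove the lim-sup. Given any $(s^\star,\vn^\star)\in\Admis(g,\vr)$, the recovery sequence $(s_h^\star,\vn_h^\star)$ from Theorem \ref{Gamma_convergence} converges strongly in $\X$ with the correct Dirichlet data; by the continuity arguments of the previous step, $E_2^h, \Ea^h, \Eext^h$ evaluated at $(s_h^\star,\vn_h^\star)$ converge to their continuous counterparts, so $\limsup E^h[s_h^\star,\vn_h^\star]\le E[s^\star,\vn^\star]$. Combined with $E^h[s_h,\vu_h]\le E^h[s_h^\star,\vn_h^\star]$ and the lim-inf inequality, we obtain $E[s,\vn]\le E[s^\star,\vn^\star]$ for every admissible $(s^\star,\vn^\star)$, hence $(s,\vn)$ is a global minimizer.

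The main obstacle is passing to the limit in $\Eext^h$, whose integrand $-\ea s_h(\vE\cdot\vn_h)^2$ does not immediately take the form of a quadratic functional of $\vu_h$. The cleanest route is to use $s_h(x_i)\vn_h(x_i)=\vu_h(x_i)$ to rewrite it nodewise as $-\ea(\vE\cdot\vn_h(x_i))(\vE\cdot\vu_h(x_i))$, a bilinear form in $(\vn_h,\vu_h)$; since $|\vn_h|\le1$ everywhere (by convexity of the norm and $|\vn_h(x_i)|=1$), $\vu_h\to\vu$ strongly in $L^2$, and $\vn_h\to\vn=\vu/s$ a.e. outside the singular set $\Sing$ (where $\vu_h\to0$ kills the integrand), dominated convergence delivers $\Eext^h[s_h,\vn_h]\to\Eext[s,\vn]$ and closes the argument.
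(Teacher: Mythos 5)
Your overall route is the same as the paper's: a uniform energy bound from minimality against a fixed competitor, coercivity via \eqref{abs_inequality} to extract a subsequence converging strongly in $\X$ and a.e.\ with the structure $\vu=s\vn$ and $\vn_h\to\vn$ a.e.\ off $\Sing$, a lim-inf for $E_1^h$ by weak lower semicontinuity, the nodewise rewriting of the lumped anchoring and electric terms through $\vu_h=I_h[s_h\vn_h]$, dominated convergence with the singular-set splitting, and a lim-sup along recovery sequences of competitors. However, two steps need repair as written. First, the claim that a uniform bound on $E_2^h[s_h]=\iO\psi(s_h)$ ``keeps $s_h$ bounded away from the singularities of $\psi$'' is false: a bound on the integral gives no $L^\infty$ control near $s=-1/2,1$ (the iterates may approach $1$ on sets of small measure), so dominated convergence is not justified. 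What you actually need at the cluster point is only the one-sided bound $E_2[s]\le\liminf_h E_2^h[s_h]$, which follows from Fatou since $\psi\ge0$ and $\psi(s_h)\to\psi(s)$ a.e.; this is how the paper argues, reserving full convergence of $E_2^h$ for the (Lipschitz, regularized) recovery sequence.

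Second, for the electric term the mass-lumping error is not ``$O(h^2)$ with controlled seminorms'': after the rewriting $I_h[s_h(\vE\cdot\vn_h)^2]=I_h[(\vE\cdot\vu_h)(\vE\cdot\vn_h)]$, the interpolation estimate involves $\|\nabla\vn_h\|_{L^2(\Om)}$, which is \emph{not} uniformly bounded (near a defect $|\nabla\vn_h|\sim h^{-1}$). The paper closes this with an inverse estimate, $\|\nabla\vn_h\|_{L^2(\Om)}\lesssim h^{-1}$, so the error is $O(h\|\vu_h\|_{H^1(\Om)})\to0$, and only then applies dominated convergence to the genuine product split over $\Sing$ and its complement; your sketch applies DCT as if the interpolant were the product. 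The same hidden difficulty reappears where you assert that $\Ea^h$ and $\Eext^h$ converge along the recovery sequence ``by the continuity arguments of the previous step'': there the director is the interpolant of the regularized $\vm_\epsilon$, which is discontinuous on the zero set of $t_\epsilon$, and the paper needs a separate argument (its $\delta$-splitting of $\{|t_\epsilon|\le\delta\}$, or equivalently the inverse-estimate bound) to control $I_h[t_\epsilon(\vE\cdot\vm_\epsilon)^2]$ near that set. With Fatou substituted for your $E_2$ step and the inverse-estimate/$\delta$-argument supplied for the lumped electric term, your proof matches the paper's.
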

\begin{proof}
We proceed in several steps.

1. {\it Coercivity}.
In view of \eqref{discrete_energy_total},
assume there is a constant $\Lambda>0$ such that
\[
\liminf_{h\to0} E^h[s_h,\vn_h] =
\liminf_{h\to0} \left( E_1^h[s_h,\vn_h] + E_2^h[s_h] + \Ea^h[s_h, \vn_h] + \Eext^h [s_h, \vn_h ] \right) \le \Lambda,
\]
for otherwise there is nothing to prove.
We apply \cite[Lemma 3.5 (coercivity)]{Nochetto_SJNA2017}
\begin{align*}
  E_1^h [s_h, \vn_h] \geq &\; \min\{\kappa, 1\} \iO |\nabla \widetilde{\vu}_h|^2 d\vx \geq \min\{\kappa, 1\} \iO |\nabla I_h |s_h||^2 d\vx,
\end{align*}
to extract subsequences (not relabeled)
$(\widetilde{s}_h,\widetilde{\vu}_h) \to (\widetilde{s},\widetilde{\vu})$
and $(s_h,\vu_h) \to (s,\vu)$ converging weakly in $[H^1(\Omega)]^{d+1}$,
strongly in $[L^2(\Omega)]^{d+1}$ and a.e. in $\Omega$.
We next invoke \cite[Lemma 3.6 (characterizing limits)]{Nochetto_SJNA2017}
to show that the limits satisfy the structure properties
\begin{equation}\label{structure}
\vu = s \vn,
\qquad
\widetilde{\vu} = \widetilde{s} \vn
\qquad\text{ in }
\Omega \setminus\Sing,
\end{equation}
for a suitable vector field $\vn$, with $|\vn|=1$, and such that
$\vn_h \to \vn$ strongly in $L^2(\Om \setminus \Sing)$ and a.e. in $\Om \setminus \Sing$.

2. {\it Lim-inf inequality.}
Using \cite[Lemma 3.4 (weak lower semicontinuity)]{Nochetto_SJNA2017} we deduce
\[
\widetilde{E}_1[\widetilde{s},\widetilde{\vu}]
= \iO (\kappa -1) | \nabla \widetilde{s} |^2 + | \nabla\widetilde{\vu} |^2 d\vx
\le \liminf_{h\to0}
\widetilde{E}_1^h[\widetilde{s}_h,\widetilde{\vu}_h]
\le \liminf_{h\to0} E_1^h[s_h,\vn_h],
\]
where the last inequality is a consequence of \eqref{abs_inequality}.
Since $s_h$ converges a.e. in $\Omega$ to $s$, so does
$\psi(s_h)$ to $\psi(s)$. Apply now Fatou's
lemma to write
\[
E_2[s]=\iO \psi(s) = \iO \lim_{h\to0} \psi(s_h)
\le \liminf_{h\to0} \iO \psi(s_h) = \liminf_{h\to0} E_2^h[s_h].
\]

We now consider the weak anchoring energy $\Ea^h[s_h,\vn_h]$ of
\eqref{eqn:colloid_weak_anchoring_energy_discrete}, i.e.
we show that
\begin{equation}\label{eqn:weak_anch_liminf_pt_1}
\iO s^2 \left[ |\vn|^2 |\nabla \phase|^2 - (\nabla \phase \cdot \vn)^2 \right] d\vx
 = \lim_{h\to0} \iO I_h \left\{ s_h^2 \left[ |\vn_h|^2 |\nabla \phase|^2 - (\nabla \phase \cdot \vn_h)^2 \right] \right\} d\vx.
\end{equation}
In view of \eqref{uh}, properties of the Lagrange interpolant yield
\begin{equation*}
  I_h \left\{ s_h^2 \left[ |\vn_h|^2 |\nabla \phase|^2 - (\nabla \phase \cdot \vn_h)^2 \right] \right\} = I_h \left[ |\vu_h|^2 |\nabla \phase|^2 - (\nabla \phase \cdot \vu_h)^2 \right].
\end{equation*}
Next, classic interpolation theory yields
\begin{equation*}
\begin{aligned}
  \big\| |\vu_h|^2 |\nabla \phase|^2 &- I_h[|\vu_h|^2 |\nabla
    \phase|^2] \big\|_{L^2(\Omega)}
\\
&\lesssim h \left\| |\vu_h| |\nabla \phase|^2 \nabla \vu_h \right\|_{L^2(\Omega)} +
h \left\| |\vu_h|^2 |\nabla \phase| \nabla^2 \phase \|_{L^2(\Omega)}
\lesssim h \| \vu_h \right\|_{H^1(\Omega)}.
\end{aligned}
\end{equation*}
Since $\| \vu_h \|_{H^1(\Omega)}$ is uniformly bounded,
$|\vu_h|^2 |\nabla \phase|^2 - I_h[|\vu_h|^2 |\nabla \phase|^2] \to 0$
in $L^2(\Omega)$ and a.e. in $\Omega$.  Similarly,
$(\nabla \phase \cdot \vu_h)^2 - I_h[(\nabla \phase \cdot \vu_h)^2] \to 0$
in $L^2(\Omega)$ and a.e. in $\Omega$.
Hence, using that $\vu_h\to\vu$ a.e. in
$\Omega$ and $|\vu_h|$ is uniformly bounded, combining the Lebesgue
dominated convergence theorem with \eqref{structure} implies the
following equivalent form of \eqref{eqn:weak_anch_liminf_pt_1}
\begin{equation}\label{eqn:convergence_pf_1}
\iO \left[ |\vu|^2 |\nabla \phase|^2 - (\nabla \phase \cdot \vu)^2\right] d\vx
= \lim_{h\to0} \iO \left[ |\vu_h|^2 |\nabla \phase|^2 - (\nabla \phase \cdot \vu_h)^2 \right] d\vx.
\end{equation}
Furthermore, since $\phi$ is smooth, the Lebesgue
dominated convergence theorem also gives
\begin{equation*}
\begin{split}
\iO |\nabla \phase|^2 (s(\vx) - s^*)^2 d\vx
= \lim_{h\to0} \iO |I_h \nabla \phase|^2 (s_h(\vx) - s^*)^2 d\vx.
\end{split}
\end{equation*}
Therefore, we obtain
\begin{equation}\label{eqn:weak_anch_liminf}
   \Ea[s,\vn] = \lim_{h\to0} \Ea^h[s_h, \vn_h].
\end{equation}

We proceed similarly for the electric energy \eqref{eqn:electric_energy}.
In view of \eqref{eqn:discrete_inner_prod_elec}, \eqref{eqn:electric_energy_discrete},
we show that
\begin{equation}\label{eqn:electric_energy_liminf_pt_1}
   \iO |\ea| |\vE|^2 - \ea s (\vE \cdot \vn)^2 d\vx =
   \lim_{h\to0} \iO I_h \left[ |\ea| |\vE|^2 |\vn_h|^2 - \ea s_h (\vE \cdot \vn_h)^2 \right] d\vx.
\end{equation}
First, we exploit that $|\vn_h|=1$ at the nodes to infer that
\begin{equation*}
\iO I_h \left[ |\vE|^2 |\vn_h|^2 \right] = \iO I_h \left[ |\vE|^2 \right]
\to \iO |\vE|^2, \quad\text{ as } h \to 0,
\end{equation*}
because $\vE$ is assumed to be smooth.  For the other term in
\eqref{eqn:discrete_inner_prod_elec}, nodal interpolation implies
\begin{equation*}
I_h \left[ s_h (\vE \cdot \vn_h)^2 \right] = I_h \left[ (\vE \cdot
  \vu_h) (\vE \cdot \vn_h) \right],
\end{equation*}
because $\vu_h=I_h[s_h\vn_h]$.
Standard interpolation theory on each element $T$ of $\Tk_h$ gives
\begin{equation*}
\begin{split}
  \| (\vE \cdot \vu_h) (\vE \cdot \vn_h) &- I_h[(\vE \cdot \vu_h) (\vE \cdot \vn_h)] \|_{L^1(T)} \lesssim h^2 \big( \| |\vu_h| |\vn_h| \|_{L^1(T)} \\
  & + \| |\vu_h| |\nabla \vn_h| \|_{L^1(T)} + \| |\nabla \vu_h| |\vn_h| \|_{L^1(T)} + \| |\nabla \vu_h| |\nabla \vn_h| \|_{L^1(T)} \big).
\end{split}
\end{equation*}
Summing over all $T \in \Tk_h$, and using Cauchy-Schwarz, we get
\begin{equation*}
  \| (\vE \cdot \vu_h) (\vE \cdot \vn_h) - I_h[(\vE \cdot \vu_h) (\vE \cdot \vn_h)] \|_{L^1(\Omega)}
  \lesssim h^2 \| \vu_h \|_{H^1(\Omega)} \| \vn_h \|_{H^1(\Omega)}.
\end{equation*}
Since $|\vn_h|\le 1$, an inverse estimate gives
$\| \nabla \vn_h\|_{L^2(\Omega)}\lesssim h^{-1}$, and so
\begin{equation*}
  \| (\vE \cdot \vu_h) (\vE \cdot \vn_h) - I_h[(\vE \cdot \vu_h) (\vE \cdot \vn_h)] \|_{L^1(\Omega)}
  \lesssim h \| \vu_h \|_{H^1(\Omega)} \to 0, ~ \text{ as } h \to 0.
\end{equation*}
Thus, we just need to show
\begin{equation}\label{eqn:electric_energy_liminf_pt_2}
   \iO (\vE \cdot \vu) (\vE \cdot \vn) d\vx =
   \lim_{h\to0} \iO (\vE \cdot \vu_h) (\vE \cdot \vn_h) d\vx.
\end{equation}
We decompose the integral into the singular set
$\Sing = \{s=0\} = \{ \vu = \vzero \}$ and the complement and use the
Lebesgue dominated convergence theorem upon realizing that
$(\vE \cdot \vu_h) (\vE \cdot \vn_h)$ is uniformly bounded.
Since $\vu_h\to\vu$ a.e. in $\Omega$, we obtain
\[
\lim_{h\to0} \int_{\Sing} (\vE \cdot \vu_h) (\vE \cdot \vn_h) d\vx
= \int_{\Omega} \chi_\Sing \lim_{h\to0} (\vE \cdot \vu_h) (\vE \cdot \vn_h) d\vx = 0
\]
In addition, we utilize \cite[Lemma 3.6 (characterizing limits)]{Nochetto_SJNA2017}
to deduce that $\vn_h\to\vn$ a.e. in $\Omega \setminus \Sing$, whence
\begin{equation*}
  \lim_{h\to0} \int_{\Omega \setminus \Sing} (\vE \cdot \vu_h) (\vE \cdot \vn_h) d\vx
  = \int_{\Omega \setminus \Sing} (\vE \cdot \vu) (\vE \cdot \vn) d\vx.
\end{equation*}
Collecting the above results,
and recalling that $\vu = s \vn$, we obtain \eqref{eqn:electric_energy_liminf_pt_1}.
Finally, the Lebesgue dominated convergence theorem implies
\begin{equation*}
  \iO (1 - s \ga) |\vE|^2 d\vx = \lim_{h\to0} \iO (1 - s_h \ga) |\vE|^2 d\vx,
\end{equation*}
whence
\begin{equation}\label{eqn:electric_energy_liminf}
   \Eext[s,\vn] = \lim_{h\to0} \Eext^h[s_h,\vn_h].
\end{equation}

Consequently, we arrived at
\begin{equation}\label{final-lim-inf}
\widetilde{E}_1[\widetilde{s},\widetilde{\vu}] + E_2[s] + \Ea[s,\vn] + \Eext[s,\vn]
\le \liminf_{h\to0} E^h[s_h,\vn_h].
\end{equation}

3 {\it Lim-sup inequality.}
This is a consistency inequality. Since we have to use Lagrange
interpolation, and so point values, we first need to invoke a regularization procedure.
Given $\epsilon>0$ arbitrary, we resort to
\cite[Proposition 3.2 (regularization of functions in
$\Admis_h(g_h,\vr_h)$)]{Nochetto_SJNA2017} to find a pair
$(t_\epsilon,\vv_\epsilon)\in \Admis(g,\vr)\cap[W^1_\infty(\Omega)]^{d+1}$ such that
\begin{equation}\label{min-epsilon}
E[t_\epsilon,\vm_\epsilon] \le
\inf_{(t,\vm)\in\Admis(g,\vr)} E[t,\vm] + \epsilon \leq E[s,\vn] + \epsilon,
\end{equation}
where $\vm_\epsilon := t_\epsilon^{-1} \vv_\epsilon$ if
$t_\epsilon \neq 0$ or otherwise $\vm_\epsilon$ is an arbitrary unit vector.
Let $(t_{\epsilon,h},\vv_{\epsilon,h})\in \Admis_h(g_h,\vr_h)$ and
$\vm_{\epsilon,h} \in \Nh$ be the Lagrange interpolants of
$(t_\epsilon,\vv_\epsilon,\vm_\epsilon)$ and apply
\cite[Lemma 3.3 (lim-sup inequality)]{Nochetto_SJNA2017} to
$(t_\epsilon,\vv_\epsilon)$ and $\vm_\epsilon$ to write
\[
E_1[t_\epsilon,\vm_\epsilon] = \lim_{h\to0} E_1^h [t_{\epsilon,h},\vm_{\epsilon,h}].
\]
Moreover, \cite[Theorem 3.7 (convergence of global discrete minimizers)]{Nochetto_SJNA2017}
shows that
\[
E_2[t_\epsilon] = \int_\Omega \lim_{h\to0}\psi(t_{\epsilon,h})
= \lim_{h\to0} \int_\Omega \psi(t_{\epsilon,h}) = \lim_{h\to0} E_2^h[t_{\epsilon,h}].
\]

We now consider the weak anchoring energy \eqref{eqn:colloid_weak_anchoring_energy_discrete}, and observe that
\begin{equation}\label{eqn:weak_anch_limsup_pt_1}
  I_h \left\{ t_{\epsilon,h}^2 \left[ |\vm_{\epsilon,h}|^2 |\nabla \phase|^2 - (\nabla \phase \cdot \vm_{\epsilon,h})^2 \right] \right\} =
  I_h \left\{ \left[ |\vv_{\epsilon, h}|^2 |\nabla \phase|^2 - (\nabla \phase \cdot \vv_{\epsilon, h})^2 \right] \right\}
\end{equation}
because of the definition of Lagrange interpolant and $\vv_\epsilon=t_\epsilon\vm_\epsilon$.
Hence, following a similar argument as in \eqref{eqn:convergence_pf_1},
we find that \eqref{eqn:weak_anch_limsup_pt_1} converges in $L^2(\Omega)$ as $h \to 0$
because $\vv_\epsilon\in [W^1_\infty(\Omega)]^d$.
Therefore, the convergence
\begin{equation*}
\begin{split}
  \Ea[t_{\epsilon}, \vm_{\epsilon}] = \lim_{h\to0} \Ea^h[t_{\epsilon,h},\vm_{\epsilon,h}]
\end{split}
\end{equation*}
follows in a similar fashion as the convergence of \eqref{eqn:weak_anch_liminf}.  Also, since
\begin{equation}\label{eqn:weak_anch_limsup_pt_2}
\begin{split}
  \iO |I_h \nabla \phase|^2 (t_{\epsilon,h} - s^*)^2 \to \iO |\nabla \phase|^2 (t_{\epsilon} - s^*)^2, ~ \text{ as } h \to 0,
\end{split}
\end{equation}
by standard interpolation theory,
taking \eqref{eqn:weak_anch_limsup_pt_1} and \eqref{eqn:weak_anch_limsup_pt_2} together,
we get
\begin{equation*}
\begin{split}
  \Ea[t_{\epsilon}, \vm_{\epsilon}] = \lim_{h\to0} \Ea^h[t_{\epsilon,h},\vm_{\epsilon,h}].
\end{split}
\end{equation*}

For the electric energy \eqref{eqn:electric_energy_discrete},
the definition of the Lagrange interpolant again implies
\begin{equation}\label{eqn:electric_energy_limsup_pt_1}
\begin{split}
   I_h \left[ |\ea| |\vE|^2 |\vm_{\epsilon,h}|^2 - \ea t_{\epsilon,h} (\vE \cdot \vm_{\epsilon,h})^2 \right] =
   I_h \left[ |\ea| |\vE|^2 - \ea t_{\epsilon} (\vE \cdot \vm_{\epsilon})^2 \right].
\end{split}
\end{equation}
The first term in \eqref{eqn:electric_energy_limsup_pt_1} clearly converges
to $|\epsilon_a| |\vE|^2$ in $L^1(\Omega)$.
For the second term $\xi_\epsilon:=t_\epsilon(\vE\cdot\vm_\epsilon)^2$,
take $\delta > 0$ arbitrary, define
$\Sing_\delta := \{|t_\epsilon| \le \delta\}$, and note that
\[
x_i\in\Nk_h: \quad |x-x_i| \le Ch \quad\Rightarrow\quad
|t_\epsilon(x)-t_\epsilon(x_i)|\le C_\epsilon h.
\]
Let $h$ be small, depending on $\epsilon$ and $\delta$, so that
$C_\epsilon h\le\frac{\delta}{2}$.
If $x\in\Sing_\delta$, then $t_\epsilon(x_i) \le \frac32\delta$ and
\[
\int_{\Sing_\delta} \big| \xi_\epsilon - I_h \xi_\epsilon \big|
\le C_\epsilon \delta.
\]
On the other hand, if $x \notin \Sing_\delta$, then $t_\epsilon(x_i)\ge \frac12\delta$
and $\xi_\epsilon$ is Lipschitz in $\Omega\setminus\Sing_{\frac{\delta}{2}}$ with constant
$C_{\epsilon,\delta}$. Therefore
\[
\int_{\Omega\setminus\Sing_\delta} \big| \xi_\epsilon - I_h \xi_\epsilon \big|
\le C_{\epsilon,\delta} h.
\]
Taking the limits, first as $h\to0$ and next as $\delta\to0$, we
infer that
\[
\lim_{h\to0} \int_\Omega I_h\xi_\epsilon dx = \int_\Omega \xi_\epsilon dx
\]
which implies convergence of the second term in \eqref{eqn:electric_energy_limsup_pt_1}.
Moreover, since
$
  \iO t_{\epsilon,h} |\vE|^2 d\vx \to \iO t_{\epsilon} |\vE|^2 d\vx,
$
as $h\to0$, we obtain
\begin{equation*}
  \Eext[t_{\epsilon}, \vm_{\epsilon}] = \lim_{h\to0} \Eext^h [t_{\epsilon,h},\vm_{\epsilon,h}].
\end{equation*}
Collecting the preceding estimates we end up with the lim-sup equality
\begin{equation}\label{final-lim-sup}
E[t_\epsilon,\vv_\epsilon] = \lim_{h\to0} E^h[t_{\epsilon,h},\vv_{\epsilon,h}].
\end{equation}

4 {\it Convergence of energy.}
We observe that
$\nabla\widetilde{\vu}=\nabla\widetilde{s}\otimes\vn+\widetilde{s}\nabla\vn$
a.e. in $\Omega\setminus\Sing$, whence
\[
\widetilde{E}_1[\widetilde{s},\widetilde{\vu}]=
\int_{\Omega\setminus\Sing} \kappa|\nabla\widetilde{s}|^2 +
|\widetilde{s}|^2 |\nabla\vn|^2
= \int_{\Omega\setminus\Sing} \kappa|\nabla s|^2 + |s|^2 |\nabla\vn|^2
= E_1[s,\vn],
\]
and note that $\Om \setminus \Sing$ can be replaced by $\Om$ because
$\nabla s = \vzero$ on $\Sing$.
Therefore, in view of \eqref{final-lim-inf}, \eqref{min-epsilon} and
\eqref{final-lim-sup}, we arrive at
\begin{equation*}
E[s,\vn] \le \liminf_{h\to0} E^h[s_h,\vn_h] \le \limsup_{h\to0} E^h[s_h,\vn_h]
\le \lim_{h\to0} E^h[t_{\epsilon,h},\vn_{\epsilon,h}] = E[t_\epsilon,\vv_\epsilon]
\le E[s,\vn] + \epsilon.
\end{equation*}
Finally, letting $\epsilon \to 0$, we see that the pair $(s,\vn)$ is a
global minimizer of $E$ and $E[s,\vn]=\lim_{h\to0} E^h[s_h,\vn_h]$, as asserted.
This concludes the proof.
\end{proof}

It remains to show the $\Gamma$-convergence when the discrete weak anchoring energy \eqref{eqn:colloid_weak_anchoring_energy_discrete} is replaced by the penalized Dirichlet
energy \eqref{discrete-dirichlet-penalty}.

\begin{cor}[convergence of global discrete minimizers]\label{cor:converge_numerical_soln_penalized_Dirichlet}
Let $\{ \Tk_h \}$ satisfy \eqref{weakly-acute} and assume $\Ea$, $\Ea^h$ are given by
\eqref{penalty}, \eqref{discrete-dirichlet-penalty}.
If $(s_h,\vu_h) \in \Admis_h (g_h,\vr_h)$
is a sequence of global minimizers of $E^h[s_h,\vn_h]$ in \eqref{discrete_energy_total}, then
every cluster point is a global minimizer of the continuous energy
$E[s, \vn]$ in \eqref{energy_total}.
\end{cor}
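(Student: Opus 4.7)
The plan is to follow the structure of the proof of Theorem \ref{thm:converge_numerical_soln} step by step, with the only modification being the treatment of the anchoring term. Coercivity (Step~1), the lim-inf/lim-sup analysis of $E_1^h, E_2^h, \Eext^h$, and the regularization used to produce the recovery sequence $(t_\epsilon,\vm_\epsilon)\in\Admis(g,\vr)\cap[W^1_\infty(\Omega)]^{d+1}$ with \eqref{min-epsilon} all apply verbatim because they do not see $\Ea^h$. Consequently it suffices to show the single convergence
\[
\lim_{h\to 0} \Ea^h[s_h,\vn_h] = \Ea[s,\vn],
\]
for any sequence $(s_h,\vu_h)\in\Admis_h(g_h,\vr_h)$ with $\vu_h\to\vu$, $s_h\to s$ in $L^2(\Omega)$ and a.e., together with $\vn_h\to\vn$ a.e.\ in $\Omega\setminus\Sing$ as produced by \cite[Lemma 3.6]{Nochetto_SJNA2017}. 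Once this is in hand, the concatenation of inequalities at the end of Step~4 of the original proof gives the assertion.

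To prove this convergence, I would first use the identity \eqref{dirichlet-penalty-discrete-form-equivalent} to rewrite
\[
\widetilde{\ipanchor}^s(s_h,s_h;\vn_h,\nabla\phase)
= \int_\Omega I_h\!\left[s_h^2|\nabla\phase|^2|\vn_h|^2\right] \,-\, 2\int_\Omega I_h\!\left[s_h^2 |\nabla\phase|\,\nabla\phase\cdot\vn_h\right] \,+\, \int_\Omega I_h\!\left[s_h^2|\nabla\phase|^2\right].
\]
The key observation is that at every node $x_i\in\Nk_h$ one has $|\vn_h(x_i)|=1$ and $s_h(x_i)\vn_h(x_i)=\vu_h(x_i)$, so that as Lagrange interpolants
\[
I_h\!\left[s_h^2|\nabla\phase|^2|\vn_h|^2\right] = I_h\!\left[|\vu_h|^2|\nabla\phase|^2\right],
\qquad
I_h\!\left[s_h^2|\nabla\phase|\,\nabla\phase\cdot\vn_h\right] = I_h\!\left[s_h|\nabla\phase|\,\nabla\phase\cdot\vu_h\right].
\]
These are the \emph{same} nodal substitutions used for the weak anchoring energy in the proof of Theorem \ref{thm:converge_numerical_soln}. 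Applying the interpolation estimate used there (namely $\|f - I_h f\|_{L^2(\Omega)}\lesssim h\|\nabla f\|_{L^2(\Omega)}$ on each element, combined with the uniform bounds $|\vu_h|\le 1$, $|s_h|\le 1$ and $\|\vu_h\|_{H^1(\Omega)}, \|s_h\|_{H^1(\Omega)}\le C$ from coercivity) removes the $I_h$ at the price of an $O(h)$ error. Then the Lebesgue dominated convergence theorem, together with $\vu_h\to\vu$ and $s_h\to s$ a.e.\ and the smoothness and boundedness of $\nabla\phase$, delivers
\[
\widetilde{\ipanchor}^s(s_h,s_h;\vn_h,\nabla\phase)
\;\longrightarrow\;
\int_\Omega \Big(2|\vu|^2|\nabla\phase|^2 - 2 s\,|\nabla\phase|\,\nabla\phase\cdot\vu\Big)\, d\vx.
\]
Invoking the structure $\vu=s\vn$ off $\Sing$ and $\vu=\vzero$ on $\Sing$, together with $|\vn|=1$ off $\Sing$, the right-hand side equals $\int_\Omega |\nabla\phase|^2\, s^2\,|\vn - \nabla\phase/|\nabla\phase||^2\,d\vx$, which is precisely the first part of the continuous energy \eqref{penalty}.

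For the remaining term $\int_\Omega |I_h\nabla\phase|^2(s_h-g_h)^2$, standard interpolation theory gives $|I_h\nabla\phase|\to|\nabla\phase|$ uniformly since $\phase$ is smooth, while $g_h\to g$ uniformly and $s_h\to s$ a.e.\ with $|s_h|\le 1$; dominated convergence then yields convergence to $\int_\Omega|\nabla\phase|^2(s-g)^2$. Combining both limits gives $\lim_{h\to 0}\Ea^h[s_h,\vn_h]=\Ea[s,\vn]$, which is the desired replacement of \eqref{eqn:weak_anch_liminf}. The recovery sequence for the lim-sup works identically: applied to the regularized pair $(t_\epsilon,\vm_\epsilon)$, the equality $I_h[t_{\epsilon,h}^2|\nabla\phase|^2|\vm_{\epsilon,h}|^2]=I_h[|\vv_{\epsilon,h}|^2|\nabla\phase|^2]$ and the $W^1_\infty$ regularity of $\vv_\epsilon$ make the interpolation estimates trivially give convergence. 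Plugging the resulting $\Ea$-convergence into the chain of inequalities at the end of the proof of Theorem \ref{thm:converge_numerical_soln} completes the argument. The main obstacle, and the only genuinely new calculation, is the cross term $I_h[s_h|\nabla\phase|\nabla\phase\cdot\vu_h]$: at nodes this equals $s_h^2 |\nabla\phase|\nabla\phase\cdot\vn_h$, which is \emph{not} the interpolant of a quantity that is a priori bounded in $H^1$ independently of the unit-length constraint; the cleanest route is the reformulation in terms of $\vu_h$ above, after which the uniform $H^1$-bound on $\vu_h$ provided by coercivity yields the needed interpolation estimate.
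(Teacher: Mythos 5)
Your proposal is correct and follows essentially the same route as the paper: reduce everything to the anchoring term, use the nodal identity $\vu_h=I_h[s_h\vn_h]$ to rewrite the penalized Dirichlet energy in terms of $(s_h,\vu_h)$, and conclude via the uniform $H^1$-bounds from coercivity, interpolation estimates, and dominated convergence for the lim-inf, with the $W^1_\infty$ regularized pair $(t_\epsilon,\vv_\epsilon)$ handling the lim-sup. The only cosmetic difference is that you expand the square through \eqref{dirichlet-penalty-discrete-form-equivalent} and treat the three terms separately, whereas the paper keeps the quantity intact as $I_h\{|T_1^h|^2\}$ with $T_1^h=|\nabla\phase|\vu_h-s_h\nabla\phase$ and passes to the limit $T_1^h\to|\nabla\phase|\vu-s\nabla\phase$ in $L^2(\Omega)$.
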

\begin{proof}
Following the proof of Theorem \ref{thm:converge_numerical_soln}, we
only need to show that the lim-inf and lim-sup
inequalities hold for the anchoring energy \eqref{discrete-dirichlet-penalty}.

{\bf Step 1. Lim-inf inequality:} Thanks to Step 1 in Theorem \ref{thm:converge_numerical_soln}, any $(s_h, \vu_h) \to (s, \vu)$ converging in $\mathbb{X}$, there exists a subsequence $(s_h, \vu_h) \to (s, \vu)$ converging weakly in $[H^1(\Omega)]^{d+1}$, strongly in $[L^2(\Omega)]^{d+1}$ and a.e. in $\Omega$.
To prove the limit-inf equality, we note that
\begin{align*}
 \Ea^h[s_h,\vn_h] &= \frac{\anchorcoef}{2} C_0 \epsilon \left( \int_\Omega I_h \left\{ s_h^2 \big| |\nabla \phase| \vn_h - \nabla \phase \big|^2  \right\}
  + \int_{\Omega} | I_h \nabla \phase|^2 (s_h - g_h)^2 \right)
\\
 &=
  \frac{\anchorcoef}{2} C_0 \epsilon \left( \int_\Omega I_h
\left\{   \big| T_1^h \big|^2  \right\}
  + \int_{\Omega} (T_2^h)^2
\right),
\end{align*}
where
\begin{align*}
T_1^h := |\nabla \phase| \vu_h - s_h \nabla \phase
\quad \mbox{and} \quad
T_2^h := | I_h \nabla \phase| (s_h - g_h).
\end{align*}
For the first term $T_1^h$, since $\vu_h$ and $s_h$ converge to $\vu$ and $s$ in $L^2(\Omega)$, we note that
\[
T_1^h \to T_1 :=  |\nabla \phase| \vu - s \nabla \phase
\quad
\mbox{ in $L^2(\Omega)$ as $h \to 0$.}
\]
Therefore, $\int_{\Omega} (T_1^h)^2 \to \int_{\Omega} (T_1)^2$ which implies that (similar to what is done in \eqref{eqn:convergence_pf_1})
\[
\int_{\Omega} I_h \{(T_1^h)^2 - (T_1)^2 \} \leq C \int_{\Omega} (T_1^h)^2 - (T_1)^2 \to 0
\quad
\mbox{ as $h \to 0$.}
\]
Moreover, since $\int_{\Omega} I_h \{ (T_1)^2 \} \to \int_{\Omega} (T_1)^2 $ as $h \to 0$, we obtain that
\begin{align}\label{eqn:T1}
\int_{\Omega} I_h \left\{ (T_1^h)^2 \right\} \to \int_{\Omega} (T_1)^2 .
\end{align}

For the second term $T_2^h$, we have
\[
T_2^h \to T_2 := | \nabla \phase | (s - g)
\quad
\mbox{ a.e. as $h \to 0$ and}
\quad
| T_2^h | \leq 2 \max_{\Omega} |\nabla \phase|.
\]
By the Lebesgue dominated convergence theorem, we have
\[
\int_{\Omega} (T_2^h)^2 \to  \int_{\Omega} (T_2)^2.
\]
Combining this with \eqref{eqn:T1}, we infer that
\begin{align*}
\lim_{h \to 0} E_a^h[s_h, \vu_h] = E_a[s, \vu].
\end{align*}

{\bf Step 2. Lim-sup inequality:}
We follow step 3 in the proof of Theorem \ref{thm:converge_numerical_soln} and set $(t_{\epsilon}, \vv_{\epsilon}) \in \mathbb{A}(g, \vr) \cap [W^1_{\infty}(\Omega)]^{d+1}$ such that $(t_{\epsilon}, \vv_{\epsilon}) \to (s, \vu)$  weakly in $[H^1(\Omega)]^{d+1}$, strongly in $[L^2(\Omega)]^{d+1}$ and a.e. in $\Omega$.
Let $(t_{\epsilon,h}, \vv_{\epsilon,h})$ be the Lagrange interpolants of $(t_{\epsilon}, \vv_{\epsilon})$, then $(t_{\epsilon,h}, \vv_{\epsilon,h}) \to (t_{\epsilon}, \vv_{\epsilon})$  strongly in $[L^2(\Omega)]^{d+1}$ and a.e. in $\Omega$. By a similar procedure as before, we are able to show that
\[
\lim_{h \to 0} E_a^h[t_{\epsilon,h}, \vv_{\epsilon,h}] = E_a[t_{\epsilon}, \vv_{\epsilon}].
\]
This concludes the proof.
\end{proof}

%---------------------------------------------------------------------------
\section{Conclusions}\label{sec:conclusion}
%--------------------------------------------------------------------------

We present a robust finite element method for the Ericksen energy
that models nematic liquid crystals with variable degree of
orientation.  This is augmented by additional energy terms to model
colloidal effects and electric fields.  We present several
simulations to illustrate the diverse range of phenomena that can be
captured by our method, e.g. interesting defect structures (such as
the Saturn ring) as well as the ability to modulate the defect
structures with external fields.  We prove a monotone energy
decreasing property for our quasi-gradient flow method (applied to
\eqref{discrete_energy_total}) which hinges on a mass-lumping strategy for the auxiliary energy terms $\Ea^h$ and $\Eext^h$.  Furthermore, we provide a full $\Gamma$-convergence proof of our discrete energy \eqref{discrete_energy_total} to the original continuous energy \eqref{energy_total}.

The following are possible extensions of this work: modeling of liquid
crystal droplets, i.e. by coupling the Ericksen energy to
Cahn-Hilliard; coupling with full electro-statics with or without
charge transport, as well as including electro-dynamics to model
liquid crystal laser devices;
and also optimizing colloidal particle distributions by actuating the
liquid crystal medium.
Furthermore, we plan on extending our method to handle the full $\vQ$-tensor model.

\bigskip

\noindent
\textbf{Acknowledgements:}
R. H. Nochetto and W. Zhang acknowledge
financial support by the NSF via
DMS-1411808. S. W. Walker acknowledges financial support by the NSF
via DMS-1418994 and DMS-1555222 (CAREER).
Moreover, R.H. Nochetto acknowledges support by the Institut Henri
Poincar\'e (Paris) and the Hausdorff Institute (Bonn), whereas
W. Zhang acknowledges support by the Brin post-doctoral fellowship at the University of Maryland.

%% The Appendices part is started with the command \appendix;
%% appendix sections are then done as normal sections
%% \appendix

%% \section{}
%% \label{}

%--------------------------------------------------------------------
% References
%--------------------------------------------------------------------
\bigskip\noindent
\textbf{References}

\bibliographystyle{elsarticle-num}

%\bibliography{MasterBibTeX}
%\bibliography{C:/FILES/LaTex/Master_BIBTEX/MasterBibTeX}

%%%%%%%%%%%%%%%%%%%%%%%%%%%%%%%%%%%%%%%%%%%%%%%%%%%%%%%%%%%%%%%%%%%%%
\end{document}